\newtheorem{theorem}{Theorem}[section]
\newtheorem{proposition}[theorem]{Proposition}
\newtheorem{lemma}[theorem]{Lemma}
\newtheorem{corollary}[theorem]{Corollary}
\theoremstyle{definition}
\newtheorem{definition}[theorem]{Definition}
\newtheorem{problem}[theorem]{Problem}
\newtheorem{example}[theorem]{Example}
\theoremstyle{remark}
\newtheorem{remark}[theorem]{Remark}
\numberwithin{equation}{section}
\DeclareMathOperator{\lip}{\text{Lip}}
\DeclareMathOperator{\erfc}{\text{erfc}}
\DeclareMathOperator{\Div}{div}
\begin{document}

\title[HJ equations with time-fractional derivative]{Well-posedness of Hamilton-Jacobi equations with Caputo's time-fractional derivative}

\author{Yoshikazu~Giga}
\address{Graduate School of Mathematical Sciences, The University of Tokyo 3-8-1 Komaba, Meguro-ku, Tokyo, 153-8914 Japan}
\email{labgiga@ms.u-tokyo.ac.jp}

\author{Tokinaga~Namba}
\address{Graduate School of Mathematical Sciences, The University of Tokyo 3-8-1 Komaba, Meguro-ku, Tokyo, 153-8914 Japan}
\email{namba@ms.u-tokyo.ac.jp}

\subjclass[2010]{26A33; 35D40; 35F21}

\keywords{Viscosity solutions; Hamilton-Jacobi equations; Caputo's time-fractional derivatives}

\begin{abstract}
A Hamilton-Jacobi equation with Caputo's time-fractional derivative of order less than one is considered.
The notion of a viscosity solution is introduced to prove unique existence of a solution to the initial value problem under periodic boundary conditions.
For this purpose comparison principle as well as Perron's method is established.
Stability with respect to the order of derivative as well as the standard one is studied.
Regularity of a solution is also discussed.
Our results in particular apply to a linear transport equation with time-fractional derivatives with variable coefficients.
\end{abstract}

\maketitle

\section{Introduction}
Let $\alpha\in(0,1]$ and $0<T<\infty$ be given constants.
We consider the initial-value problem for the Hamilton-Jacobi equation of the form
\begin{equation}
\label{e:fhj}
\partial_{t}^{\alpha}u+H(t,x,u,Du)=0\quad\text{in $(0,T]\times\mathbb{T}^{d}=:Q_T$}
\end{equation}
and
\begin{equation}
\label{e:init}
u|_{t=0}=u_0\quad\text{in $\mathbb{T}^{d}$}.
\end{equation}
Here $\mathbb{T}^{d}:=\mathbb{R}^{d}/\mathbb{Z}^{d}$ is {the} $d$-dimensional torus, $u:\overline{Q_T}\to\mathbb{R}$ is an unknown function and $H:\overline{Q_T}\times\mathbb{R}\times\mathbb{R}^{d}\to\mathbb{R}$ is a given function called a \emph{Hamiltonian}.
Moreover, $Du$ denotes the spatial gradient, i.e., $Du=(\partial u/\partial x_1,\cdots,\partial u/\partial x_d)$ and
$\partial_{t}^{\alpha}u$ denotes \emph{Caputo's (time-)fractional derivative} which is defined by
\begin{equation}
\label{e:caputo}
(\partial_{t}^{\alpha}f)(t):=\begin{cases}
	\displaystyle\frac{1}{\Gamma(1-\alpha)}\int_{0}^{t}\frac{f'(s)}{(t-s)^{\alpha}}ds\quad&\text{for $\alpha\in(0,1)$,}\\
	f'(t)\quad&\text{for $\alpha=1$,}
	\end{cases}
\end{equation}
where $\Gamma(\cdot)$ is the usual gamma function.
Throughout this paper, a function $v$ on $\mathbb{T}^{d}$ is regarded as a function defined on $\mathbb{R}^{d}$ with $\mathbb{Z}^{d}$-periodically, i.e., $v(x+z)=v(x)$ for all $x\in\mathbb{R}^{d}$ and $z\in\mathbb{Z}^{d}$.
Although some part of our arguments can be easily extended to other boundary conditions we now restrict ourselves only under periodic boundary conditions.

{The goal of this paper is to find a proper notion of viscosity solutions so that \eqref{e:fhj}-\eqref{e:init} is well-posed. More specifically, we establish unique existence, stability and some regularity results of a viscosity solution for \eqref{e:fhj}-\eqref{e:init}}.
Here we will consider only for $\alpha\in(0,1)$ since the case of $\alpha=1$ has been well studied.
All results excepts for Section \ref{s:regularity} and Section \ref{s:comment} will be established under the following assumptions:
\begin{itemize}
\item[(A1)] $H:\overline{Q_T}\times\mathbb{R}\times\mathbb{R}^{d}\to\mathbb{R}$ is continuous,
\item[(A2)] there is a modulus $\omega:[0,\infty)\to[0,\infty)$ such that 
$$
|H(t,x,r,p)-H(t,y,r,p)|\le\omega(|x-y|(1+|p|))
$$
for all $(t,x,y,r,p)\in[0,T]\times\mathbb{R}^{d}\times\mathbb{R}^d\times\mathbb{R}\times\mathbb{R}^{d}$,
\item[(A3)] $r\mapsto H(t,x,r,p)$ is nondecreasing for all $(t,x,p)\in[0,T]\times\mathbb{T}^d\times\mathbb{R}^{d}$,
\item[(A4)] $u_0:\mathbb{T}^{d}\to\mathbb{R}$ is continuous.
\end{itemize}
We emphasize that these assumptions are fairly standard for $\alpha=1$.
Of course there might be several generalizations but we do not touch them.
Note that we do not assume coercivity, i.e.,
\begin{equation}
\label{e:coercive}
\liminf_{r\to\infty}\{H(t,x,r,p) \mid (t,x,r)\in Q_T\times\mathbb{R},|p|\ge r\}=+\infty.
\end{equation}
Hence our results apply to a transport equation
\begin{equation}
\label{e:transport}
\partial_t^{\alpha}u+b\cdot Du=0
\end{equation}
for $b=b(t,x):\overline{Q_T}\to\mathbb{R}^d$.

Since a notion of viscosity solutions was introduced by Crandall and Lions \cite{cl}, its theory has developed rapidly and by now there is a large number of literature.
The reader is referred to \cite{abig}, \cite{bcd} and \cite{k} for basic theory
and to \cite{cil} and \cite{g} for more advanced theory.
The theory of viscosity solutions had been applied initially to local partial differential equations (pdes for short) and soon has been extended by Soner \cite{s} to pdes with space-fractional derivatives which are defined non-locally.
See also \cite{bi}, \cite{ai}, {\cite{cs}, \cite{i}} and references therein.
In these papers the authors are commonly interested in L\'{e}vy operators, which can be represented (formally) as
\begin{equation}
\label{e:levy}
g[f](x)=-\int_{\mathbb{R}^d}\left(f(x+z)-f(x)-\frac{Df(x)\cdot z}{1+|z|^2}\right)d\mu(z),
\end{equation}
where $d\mu$ is the L\'{e}vy measure.
An example of L\'{e}vy operators is the fractional Laplacian:
\begin{equation}
\label{e:flaplacian}
(-\Delta)^{\alpha}f(x)=C\int_{\mathbb{R}^d}\frac{f(x)-f(y)}{|x-y|^{d+2\alpha}}dy,
\end{equation}
where $C$ is a constant depending on $d$ and $\alpha$.

Above works are motivated from applied fields such as physics, engineering and finances. 
Applicabilities of pdes with time-fractional derivatives has been discussed by many researchers in wide fields as well; \cite{fch}, \cite{fgd}, \cite{smb} and \cite{v} for instance.
We here refer to several mathematical works for pdes with Caputo's time-fractional derivative (CTFD for short) in order to motivate our research.
Although many definitions of a different kind of fractional derivatives have been suggested, we will not touch them in this paper and instead the reader is referred to \cite{d}, \cite{gkmr}, \cite{krs}, \cite{kst}, \cite{p}, \cite{skm} and \cite{z}.
A typical example of pdes with CTFD is
\begin{equation}
\label{e:anomolous}
\partial_t^{\alpha}u+L(u)=F,
\end{equation}
where $L$ consists of a symmetric uniformly elliptic operator and a transport term and $F=F(t,x)$ is a given function.
This can be considered as {an} equation describing diffusion phenomena in complex media like fractals and then is called \emph{anomalous diffusion} or \emph{singular diffusion}.
There seem to be several previous works for \eqref{e:anomolous} (see \cite{sy} and references therein) and Luchko's works have a close relationship with ours.
He established a maximum principle for Caputo's fractional derivative in \cite{l} and, based on it, proved a uniqueness of classical solutions for an initial-boundary value problem of \eqref{e:anomolous} with the type of $L(u)=-\Div(p(x)Du)+q(x)u$, where typically $p$ is smooth and uniformly positive with continuous $q$.  
In \cite{l2} he established an existence of classical solutions for same equations {and gave some existence results of a continuous generalized solution as well}.
His research has been continued in a work by Sakamoto and Yamamoto \cite{sy}, which is a pioneer work in the theory of weak solutions for \eqref{e:anomolous}.
They defined {a weak solution} in the sense of distribution for a similar equation as one Luchko considered and established well-posedness in order to consider inverse problems.
Researches on this line have been growing rapidly; see, e.g., \cite{lly} for multi-term time-fractional derivatives and \cite{lryz} for {\eqref{e:anomolous}} with {semilinear} source terms.

{Several pdes with strong nonlinearity have also been considered.} Anomalous diffusion equations are modeled by the continuous-time random walk (CTRW for short) introduced by Montroll and Weiss (\cite{mw}).
Kolokoltsov and Veretennikova (\cite{kv}) extended the notion of CTRW so that its processes can be controlled and then derived (heuristically) Hamilton-Jacobi-Bellman equations with CTFD, fractional Laplacian and some additional term.
We note that this does not include second order spatial-derivative.
In \cite{kv2} they also defined mild solutions that belong to $C^{1}([0,T];C^{1}_{\infty}(\mathbb{R}^d))$ and proved well-posedness for an initial-value problem of
$$
\partial_t^{\alpha}u=-a(-\Delta)^{\beta/2}u+H(t,x,Du).
$$
Here $C^{1}_{\infty}(\mathbb{R}^d)$ is a set of $C^1$ functions that decreasing rapidly at infinity and $\beta\in(1,2]$, $a>0$ are given constants {and $H$ is a given Lipschitz continuous function}.
{In \cite{a} Allen extended the notion of viscosity solutions to the time-space nonlocal equation with CTFD of the form
$$
\partial_t^{\alpha}u-\sup_i\inf_j\left(\int_{\mathbb{R}^d}\frac{u(t,x+y)-u(t,x)}{|y|^{d+2\sigma}}a^{ij}(t,x,y)dy\right)=f
$$
and discussed regularity problems of the solutions.
Here $a_{ij}$ is positive, bounded function that is symmetric with respect to the third variable and $f$ is a given function.
To the best of our knowledge, this seems to be the only results for pdes with CTFD by a viscosity approach. He defined the solutions based on the idea of viscosity solutions by treating 
\begin{align}
K_0[f](t)&=\frac{f(t)-f(0)}{t^\alpha\Gamma(1-\alpha)}+\frac{\alpha}{\Gamma(1-\alpha)}\int_0^t\frac{f(t)-f(s)}{(t-s)^{\alpha+1}}ds\label{e:k01}\\
&=\frac{\alpha}{\Gamma(1-\alpha)}\int_{-\infty}^t\frac{\tilde{f}(t)-\tilde{f}(s)}{(t-s)^{\alpha+1}}ds;\label{e:k02}
\end{align}
in \cite{a} this quantity is denoted by $\partial_t^\alpha$.
Here $\tilde{f}:(-\infty,T]\to\mathbb{R}$ is an extension of $f:[0,T]\to\mathbb{R}$ by $\tilde{f}(t)=f(0)$ for $t<0$. Note that $\partial_t^\alpha f=K_0[f]$ by integrating by parts if $f$ is smooth; see Proposition \ref{p:integrationbyparts}. He dealt with CTFD by $K_0$ in the similar way as space-fractional derivatives mentioned above since the form of integration in $K_0$ is very close to fractional Laplacian. However, the well-posedness remained to be unclear (at least for non-experts). The operator $K_0$ is strictly different from the fractional Laplace operator since there are several different properties between their kernels such as symmetry. In addition, the fractional Laplacian (or the L\'{e}vy operator) is suitable for boundary value problems, whereas Caputo's derivative is suitable for initial value problems. From these facts, studying CTFD is not a simple adjustment of the space-fractional cases.}

We explained so far second-order pdes or first-order pdes including fractional Laplacian with CTFD.
For first-order pdes with CTFD but without any higher-order terms than one such as \eqref{e:transport}, a formula of solution for \eqref{e:transport} is given by Mainardi, Mura and Pagnini (\cite{mmp}) for instance, but for equations with constant coefficients. 
However, there seems to be no {general frameworks}.
{In light of the above situations, we aim to construct the synthetic theory of viscosity solutions so that (fully nonlinear) pdes with CTFD mentioned above can be considered.}
However, extensions to second order equations expects some technical issues, so we only treat first order equations in this paper as the first step.
For second order problems the reader is referred to one of forthcoming papers of the second author.

We motivate our definition of viscosity solutions (Definition \ref{d:vsol}) by recalling the case of $\alpha=1$.
Let us suppose that $u$ is a classical subsolution of \eqref{e:fhj}, that is,
$$
(\partial_t^\alpha u)(t,x)+H(t,x,u(t,x),Du(t,x))\le0
$$
for all $(t,x)\in Q_T$ 
and that
\begin{equation}
\label{e:basicassumption}
\max_{[0,T]\times\mathbb{R}^d}(u-\phi)=(u-\phi)(\hat{t},\hat{x})
\end{equation}
for a {test} function $\phi$.
The classical maximum principle in space implies that $Du=D\phi$ at $(\hat{t},\hat{x})$.
With respect to time, the maximum principle for CTFD (\cite[Theorem 1]{l}) implies that $\partial_{t}^{\alpha}u\ge\partial_{t}^{\alpha}\phi$ at $(\hat{t},\hat{x})$.
Hence, that $u$ is a classical subsolution yields to
\begin{equation}
\label{e:nvsol}
(\partial_{t}^{\alpha}\phi)(\hat{t},\hat{x})+H(\hat{t},\hat{x},u(\hat{t},\hat{x}),D\phi(\hat{t},\hat{x}))\le0.
\end{equation}
In the spirit of the case of $\alpha=1$, it is natural to define {a weak subsolution} of \eqref{e:fhj} by {means of} \eqref{e:nvsol}.
Let us call it {a \emph{provisional subsolution}} in this paper.
Similarly, if $u$ is a classical supersolution and we replace the maximum by a minimum in \eqref{e:basicassumption}, then the converse inequality of \eqref{e:nvsol} is led.
Then let us call {$u$ defined by means of the inequality a \emph{provisional supersolution}} of \eqref{e:fhj}.
Let us call $u$ a \emph{provisional solution} of \eqref{e:fhj} if it is a both provisional sub- and supersolution of \eqref{e:fhj}.

{The notion of} provisional solutions looks easy to deal with but it is technically difficult to establish a comparison principle, so we do not know whether it is a proper notion of solution {for \eqref{e:fhj}} or not (see Section \ref{s:comment} for {some observations}).
One reason is that the so-called \emph{doubling variable method} (see, e.g., \cite[Section 3.3]{g}) does not work and a main problem is, roughly speaking, that $(\partial_t^{\alpha}\phi)(\hat{t},\hat{x})$ in \eqref{e:nvsol} is not an appropriate substitute of $(\partial_t^{\alpha}u)(\hat{t},\hat{x})$.
In a proof of comparison principle in the theory of viscosity solutions, we often aim to derive a contradiction by using the doubling variable method under a suitable supposition.
For provisional sub/supersolutions, we cannot derive a contradiction because of unnecessary values caused by $\partial_t^{\alpha}\phi$.

This fact makes us realize that it is necessary to {find} a function that has a closer value to $\partial_{t}^{\alpha}u$ at each point. 
{It becomes now clear that it is better to handle the operator $K_0$ considered by Allen than $\partial_t^\alpha$. 
Strictly speaking, Allen adopted \eqref{e:k02}, but we adopt 
$$
K_0[u](t,x)=\frac{u(t,x)-u(0,x)}{t^\alpha \Gamma(1-\alpha)}+\frac{\alpha}{\Gamma(1-\alpha)}\int_0^t(u(t,x)-u(t-\tau,x))\frac{d\tau}{\tau^{\alpha+1}}
$$
for technical reasons, which is derived from \eqref{e:k01} by changing the variable of integration.}
Here the integral is interpreted as an improper integral
\begin{equation}
\label{e:improperintegral}
\int_{0}^{t}(u(t,x)-u(t-\tau,x))\frac{d\tau}{\tau^{\alpha+1}}=\lim_{r\searrow0}\int_{r}^{t}(u(t,x)-u(t-\tau,x))\frac{d\tau}{\tau^{\alpha+1}}.
\end{equation}
Since the convergence of (improper) integration \eqref{e:improperintegral} is not trivial, we apply test functions instead of unknown functions near the singular time, i.e., the lower end of interval {of integration}.
This idea is taken in our definition.
{We note that our solution is the same as Allen's except for handling non-locality in the spatial direction since this idea is similar as for him (or space-fractional derivatives).}

Let us give strategies of our proofs in this paper.
{They are basically similar to space-fractional cases with integer-order time derivatives (\cite{bi}, \cite{ai}, \cite{cs} and \cite{i}) although there are some differences depending on the role of time and space derivatives.}
We will show that \eqref{e:improperintegral} exists as a finite number at points such that $u-\phi$ attains a maximum/minimum (Lemma \ref{l:equivalence}), where $\phi$ is a test function.
This is an analogy of \cite{bi} or \cite[Lemma 3.3]{cs} and {the proof follows similar arguments}.
{This fact enables us to prove the comparison principle.}
An idea of the proof of comparison principle is the same as usual ones, that is, doubling variable method under contradiction.
{We note that the space-fractional case has an elliptic flavor while CTFD has an evolutional flavor, so the details of the proof are different from the space-fractional case.
Indeed, in the process of doubling variables we subtract the inequality for a viscosity supersolution from the inequality for a viscosity subsolution.
For space-fractional cases, we reach a contradiction from time derivative of test functions while all spatial quantities are canceled.
On the other hand, in our case the contradiction follows from the term $K_0$, so we have to keep this term until the end.
More precisely, the contradiction is derived from terms having no integration in a difference of $K_0$ for viscosity sub- and supersolutions.
Note also that test functions like $\eta/(T-t)$ or $\eta t$ seem to be not helpful and even unnecessary in an auxiliary function, where $\eta$ is a positive constant.
}

A proof of existence of viscosity solutions follows Perron's method, that is, a construction of maximal subsolutions.
Since \eqref{e:fhj} is nonlocal, we need some efforts to handle nonlocal terms compared with the case of local equations.
For this purpose we will employ the idea used in \cite[Theorem 3]{i} for example.

We will establish stability results under limit operation from two perspectives.
One of them is for a family of solutions of \eqref{e:fhj} with a Hamiltonian depending on a parameter.
Our statement and proof are almost the same as for $\alpha=1$ (see, e.g., \cite{bcd}). 
The other stability discussed here is the case when time-derivative's orders are regarded as parameters.
The latter can be proved under the same idea as the former by defining analogous functions of half-relaxed limits. 

We will show that viscosity solutions are Lipschitz continuous in space and $\alpha$-H\"{o}lder continuous in time under some additional assumptions on $H$ and $u_0$.
When the regularity problems for viscosity solutions are discussed, the coercivity condition is often assumed.
However, transport equations are not coercive.
In view of applications we will derive the above regularity results without the coercivity assumption.
Our proofs follow basically ones for $\alpha=1$ (\cite{abig}) but a proof of the temporal regularity may be not standard.
We will construct a viscosity solution of \eqref{e:fhj}-\eqref{e:init} that is $\alpha$-H\"{o}lder continuous in time by Perron's method for a family of viscosity subsolutions of \eqref{e:fhj}-\eqref{e:init} that is $\alpha$-H\"{o}lder continuous in time.
In this argument we should be carefully for a dependence of the H\"{o}lder constant of viscosity subsolutions.
We will show that viscosity solutions are H\"{o}lder continuous in time at the initial time, where its H\"{o}lder constant depends only on $T$, $\alpha$, $H$ and $u_0$.
By restricting viscosity subsolutions to H\"{o}lder continuous functions with such a constant, we will obtain viscosity solution with the desired regularity.

Our results are new even for transport equations \eqref{e:transport} with variable coefficients although a formula of solution for constant coefficients is known only in the one dimensional case (see Section \ref{s:regularity}). 
For this reason it is worth summarizing here.

\begin{theorem}
Let $b:\overline{Q_T}\to\mathbb{R}^d$ be a continuous function.
Assume that there is constants $C_1>0$ and $C_2>0$ such that 
\begin{equation}
\label{e:coefficient}
|b(t,x)-b(t,y)|\le C_1|x-y|
\end{equation}
and
\begin{equation}
\label{e:lipschitzinitial}
|u_0(x)-u_0(y)|\le C_2|x-y|
\end{equation}
for all $(t,x,y)\in(0,T]\times\mathbb{R}^d\times\mathbb{R}^d$.
Then there exists at most one viscosity solution $u\in C(\overline{Q_T})$ of \eqref{e:transport}-\eqref{e:init}.
Moreover there exists a constant $C_3>0$ such that
\begin{equation}
\label{e:regularity}
|u(t,x)-u(s,y)|\le C_3(|t-s|^{\alpha}+|x-y|)
\end{equation}
for all $(t,x,s,y)\in([0,T]\times\mathbb{R}^d)^2$.
\end{theorem}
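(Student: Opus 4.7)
The strategy is to view this theorem as a specialization of the three pillars of the paper — the comparison principle, Perron-type existence, and the Lipschitz/H\"{o}lder regularity theorem of Section \ref{s:regularity} — to the Hamiltonian $H(t,x,r,p) := b(t,x)\cdot p$. The only substantive work is the verification that this $H$ satisfies the hypotheses of those three results; everything else is then automatic.

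First I would check (A1)--(A4). (A1) is immediate from continuity of $b$; (A3) is trivial because $H$ does not depend on $r$; (A4) follows from \eqref{e:lipschitzinitial}. For (A2), a direct computation using \eqref{e:coefficient} gives
$$
|H(t,x,r,p)-H(t,y,r,p)|=|(b(t,x)-b(t,y))\cdot p|\le C_1|x-y|\,|p|\le C_1|x-y|(1+|p|),
$$
so the modulus $\omega(\rho)=C_1\rho$ will do. Uniqueness of a viscosity solution in $C(\overline{Q_T})$ is then immediate from the comparison principle.

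For existence together with the regularity \eqref{e:regularity} I would invoke the regularity theorem of Section \ref{s:regularity}. The extra hypotheses on $H$ and $u_0$ demanded there boil down to Lipschitz continuity of $H$ in $x$ and of $u_0$, both of which are supplied by \eqref{e:coefficient} and \eqref{e:lipschitzinitial}. The scheme is to construct the solution by Perron's method inside the class of subsolutions that are Lipschitz in $x$ with a prescribed constant $L$ and $\alpha$-H\"{o}lder at $t=0$ with a prescribed constant $K$. The initial-time H\"{o}lder bound is produced by the barriers $u_0(x)\pm Kt^\alpha$, whose sub/supersolution property is checked using $\partial_t^\alpha(t^\alpha)=\Gamma(1+\alpha)$, the boundedness of $b$ on $\overline{Q_T}$, and the Lipschitz constant $C_2$; the spatial Lipschitz bound propagates in time via the doubling-variable argument of the comparison proof carried out in the $x$ variable, where the cross term produced by \eqref{e:coefficient} is absorbed into $L$.

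The principal difficulty, and the reason the regularity must be handled at the level of Perron's construction rather than a posteriori, is that the H\"{o}lder-in-time bound must be propagated with a constant depending only on $T,\alpha,C_1,C_2$ and $\|u_0\|_\infty$. Because $\partial_t^\alpha$ is nonlocal in time, one cannot simply restart the equation at $t=h>0$ to bootstrap. The remedy, indicated in the introduction, is to restrict Perron's method to subsolutions that already carry the desired H\"{o}lder constant at $t=0$, so that the maximal such subsolution automatically inherits the bound; the delicate step is to verify that this restricted class is closed under the envelope operations used in Perron's method and still yields a supersolution in the limit.
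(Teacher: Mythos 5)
Your proposal is correct and follows essentially the same route as the paper: the theorem is precisely the specialization of the general results to $H(t,x,r,p)=b(t,x)\cdot p$, with uniqueness coming from the comparison principle (Theorem \ref{t:comp}, Corollary \ref{c:unique}), existence from Perron's method (Corollary \ref{cor:constructbarrier}), and the estimate \eqref{e:regularity} from the spatial Lipschitz-preserving lemma (with $L_1=0$, $L_2=C_1$) together with the temporal H\"{o}lder bound obtained by running Perron's method inside the class of subsolutions carrying the fixed H\"{o}lder constant from the barriers $u_0(x)\pm Mt^{\alpha}$. Your verification of (A1)--(A4) and of the strengthened Lipschitz hypotheses on $H$ and $u_0$, as well as your identification of the closure-under-envelopes issue as the delicate point of the temporal estimate, match the paper's argument in Section \ref{s:regularity}.
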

If one does not require \eqref{e:regularity}, conditions \eqref{e:coefficient} and \eqref{e:lipschitzinitial} can be weakened so that Hamiltonian $H(t,x,p)=b(t,x)\cdot p$ satisfies (A1) and (A2).

We finally compare with viscosity solutions with weak solutions in the sense of distribution (weak solution for short) and mention several open problems.
A weak solution for linear second order problems \eqref{e:anomolous} given by Sakamoto and Yamamoto (\cite{sy}) was constructed by {a} Galerkin method.
Since approximate equations have no comparison principle, it is difficult to compare two notions under the current circumstances.
Of course, the case of $\alpha=1$ has the same difficulty and, even such simple looking case, there seems to be few literatures (\cite{hl}, \cite{ishii3}, \cite{jlm} and \cite{jlp}).
In order to overcome such a difficulty, analyses for further regularities of weak solutions in the both senses will be needed.

As another direction of researches for weak solutions of pdes with CTFD, we should mention fractional derivative of the form 
\begin{equation}
\label{e:caputooriginal}
(D^\alpha_tf)(t):=\frac{1}{\Gamma(1-\alpha)}\frac{d}{dt}\int_0^t\frac{f(\tau)-f(0)}{(t-\tau)^{\alpha+1}}d\tau.
\end{equation}
The original definition of Caputo's fractional derivative by himself was actually given as this form and hence this derivative is also called Caputo's fractional derivative.
We note that $D^\alpha_t f=\partial_t^\alpha f$ almost everywhere on $[0,T]$ if $f$ is absolutely continuous on $[0,T]$. 
See \cite[Chapter 3]{d} for a brief history of Caputo's fractional derivative and the above relationship between two definitions.
There are some works for weak solutions of pdes with \eqref{e:caputooriginal}.
Zacher (\cite{za}) considered abstract evolutional equations of parabolic type including 
$$
D^\alpha_tu-\Div(ADu)+b\cdot Du+cu=0
$$
and, by introducing a notion of a weak solution, he established a unique existence.
Here, $A=A(t,x)$ is a symmetric and positive defined matrix-valued function with $L^\infty$ elements and $b=b(t,x)$ and $c=c(t,x)$ are $L^\infty$ functions.
See \cite{za2}, \cite{za3} and \cite{krr} for related works.
An analysis of weak solutions for pdes with \eqref{e:caputooriginal} involves the problem of the trace $u(0,\cdot)$ of $u$ up to $t=0$ since $D_t^\alpha u$ includes the value $u(0,\cdot)$.
This needs some regularity up to $t=0$ which forced as to restrict range of $\alpha$, say, for example $\alpha>1/2$ or regularity of some of given functions $A$, $b$ and $c$ compared with the case $\alpha=1$ (\cite{ku}).
We note that such a trace problem was not considered in \cite{za}; moreover, assumptions of \cite{za} seem to be too weak to get necessary regularity.
In view of such restrictions, our viscosity solutions might look a better notion of weak solutions since we are able to obtain a continuous (viscosity) solution for every $\alpha\in(0,1)$ with no special assumptions on $H$.
However, we cannot compare two notions since it is not guaranteed that our solution $u$ is absolutely continuous in time, so it is not clear whether or not $D_t^\alpha u=\partial_t^\alpha u$ for our solution.
Even for this problem, further analyses from both aspects of viscosity solutions and weak solutions are needed. 

This paper is organized as follows: In Section 2 we give a definition of viscosity solutions after and summarize some facts used in the other sections. In Section 3 we prove a comparison principle and in Section 4 we establish an existence result.
In Section 5 we prove two types of stability results and in Section 6 we study regularity problem for \eqref{e:fhj}. Finally, in Section 7 we give a definition of provisional solutions as another possible notion of weak solutions and mention the technical difficulty for them.

\section{Definition and properties of solutions}\label{s:defsol}
In this section we assume that Hamiltonian $H$ is merely continuous on $\overline{Q_T}\times\mathbb{R}\times\mathbb{R}^{d}$.

\subsection{Preliminaries}
To give a definition of viscosity solutions we first introduce a function space of the type
$$
\mathcal{C}^{1}([a,b]\times O):=\{\phi\in C^{1}((a,b]\times O)\cap C([a,b]\times O) \mid \text{$\partial_{t}\phi(\cdot,x)\in L^{1}(a,b)$ for every $x\in O$}\}.
$$
Here $a,b\in\mathbb{R}$ are constants such that $a<b$, $O$ is a domain in $\mathbb{R}^d$, $\mathbb{T}^d$ and $\mathbb{R}^{d}\times\mathbb{R}^d$ and $L^1(a,b)$ is the space of Lebesgue integrable functions on $(a,b)$. 
Note that $u\in \mathcal{C}^{1}([a,b]\times O)$ may not be $C^1$ up to $t=a$.
This space will be used as a space of test functions as well as of classical solutions of \eqref{e:fhj}-\eqref{e:init}.
Here we define classical solutions of \eqref{e:fhj}-\eqref{e:init} as follows: 

\begin{definition}[Classical solutions]
A function $u\in\mathcal{C}^1(\overline{Q_T})$ is called a \emph{classical solution} of \eqref{e:fhj}-\eqref{e:init} if $u(0,\cdot)=u_0$ on $\mathbb{T}^d$ and
$$
(\partial_t^{\alpha}u)(t,x)+H(t,x,u(t,x),Du(t,x))=0
$$
for all $(t,x)\in Q_T$.
\end{definition}

Note that $\partial_t^{\alpha}\phi$ is bounded in $(0,T]\times\mathbb{R}^d$ if $\phi\in\mathcal{C}^1([0,T]\times\mathbb{R}^d)$; see \cite[Theorem 2.1]{d}.
We are tempted to use $C^{1}([a,b]\times O)$ as a space of classical solutions since the integrability condition for $\partial_t\phi(\cdot,x)$ is satisfied if $\phi$ belongs to it: $C^{1}([a,b]\times O)\subset\mathcal{C}^{1}([a,b]\times O)$.
However, the class $C^{1}([a,b]\times O)$ is too narrow to define classical solutions since it is necessary to include functions that have a fractional power with respect to time at the initial time such as $t^{\alpha}$.
That is why we do not assume the differentiability at the initial time.

\begin{example}
As an example let us consider a simple ordinary differential equation of the form
$$
\partial_{t}^{\alpha}f+f=0\quad\text{in $(0,\infty)$}
$$
with prescribed data $f(0)=c\in\mathbb{R}$.
According to \cite[Theorem 4.3]{d} a solution of this equation is given as $f(t)=c E_{\alpha}(-t^{\alpha})$, where $E_{\alpha}$ is the Mittag-Leffler function defined by
$$
E_{\alpha}(z):=\sum_{j=0}^{\infty}\frac{z^j}{\Gamma(j\alpha+1)}.
$$
In particular, $E_{1/2}(-\sqrt{t})=e^t \erfc(\sqrt{t})$, where $\erfc$ is the complementary error function defined by
$$
\erfc(z):=\frac{2}{\sqrt{\pi}}\int_{z}^{\infty}e^{-t^2}dt.
$$
The function $f$ is not differentiable at $t=0$ though it is continuous up to $t=0$; we leave the verification to the reader.
Therefore classical solutions of equations with Caputo's (time-)fractional derivative are not always differentiable at the initial time even if an initial datum is smooth.
\end{example}

For a measurable function $f:[0,T]\to\mathbb{R}$
we define functions $J_r[f],K_r[f]:(0,T]\to\mathbb{R}$ by
$$
J_r[f](t):=\frac{\alpha}{\Gamma(1-\alpha)}\int_0^r(f(t)-f(t-\tau))\frac{d\tau}{\tau^{\alpha+1}}
$$
and
$$
K_r[f](t):=\frac{f(t)-f(0)}{t^{\alpha}\Gamma(1-\alpha)}+\frac{\alpha}{\Gamma(1-\alpha)}\int_r^t(f(t)-f(t-\tau))\frac{d\tau}{\tau^{\alpha+1}}
$$
with a parameter $r\in(0,t)$.
For a measurable function $f:[0,T]\times\mathbb{R}^\ell\to\mathbb{R}$ with $\ell\ge1$ we define $J_r[f],K_r[f]:(0,T]\times\mathbb{R}^\ell\to\mathbb{R}$ by $J_r[f](t,x):=J_r[f(\cdot,x)](t)$ and $K_r[f](t,x):=K_r[f(\cdot,x)](t)$ for $(t,x)\in(0,T]\times\mathbb{R}^\ell$.

\begin{proposition}[Integration by parts]
\label{p:integrationbyparts}
Let $f:[a,T]\to\mathbb{R}$ be a function such that $f\in C^{1}((a,T])\cap C([a,T])$ and $f'\in L^{1}(a,T)$, where $a<T$.
Then
$$
\frac{1}{\Gamma(1-\alpha)}\int_a^t\frac{f'(\tau)}{(t-\tau)^{\alpha}}d\tau
=\frac{f(t)-f(a)}{(t-a)^{\alpha}\Gamma(1-\alpha)}+\frac{\alpha}{\Gamma(1-\alpha)}\int_0^{t-a}(f(t)-f(t-\tau))\frac{d\tau}{\tau^{\alpha+1}}.
$$
\end{proposition}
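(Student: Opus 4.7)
The plan is to reduce the formula to a standard integration-by-parts on a non-singular interval, then take a limit. First I would change the variable of integration on the left-hand side by setting $s=t-\tau$, which gives
$$
\int_a^t\frac{f'(\tau)}{(t-\tau)^\alpha}\,d\tau=\int_0^{t-a}\frac{f'(t-s)}{s^\alpha}\,ds.
$$
The integrand $s^{-\alpha}$ is singular at $s=0$, so I would work on the truncated interval $[r,t-a]$ with $r\in(0,t-a)$ and pass $r\searrow 0$ at the very end.

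Next I would integrate by parts on $[r,t-a]$ with $u=s^{-\alpha}$ and $dv=f'(t-s)\,ds$, so that $du=-\alpha s^{-\alpha-1}ds$ and $v=-f(t-s)$. This yields
$$
\int_r^{t-a}\frac{f'(t-s)}{s^\alpha}\,ds=\frac{f(t-r)}{r^\alpha}-\frac{f(a)}{(t-a)^\alpha}-\alpha\int_r^{t-a}\frac{f(t-s)}{s^{\alpha+1}}\,ds.
$$
To introduce the telescoping with $f(t)$, I would use the elementary identity
$$
\alpha\int_r^{t-a}\frac{ds}{s^{\alpha+1}}=\frac{1}{r^\alpha}-\frac{1}{(t-a)^\alpha},
$$
multiply it by $f(t)$, and add it to the previous line. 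After collecting terms, the right-hand side becomes
$$
\frac{f(t)-f(a)}{(t-a)^\alpha}+\frac{f(t-r)-f(t)}{r^\alpha}+\alpha\int_r^{t-a}\frac{f(t)-f(t-s)}{s^{\alpha+1}}\,ds,
$$
which is exactly the desired right-hand side apart from the extra boundary term $(f(t-r)-f(t))/r^\alpha$ and the truncation.

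The final step is the limit $r\searrow 0$, and this is the only place where regularity of $f$ plays a role. Since $f\in C^1((a,T])$, near the point $t$ we have $|f(t-r)-f(t)|\le C_t\,r$ for small $r$, whence $(f(t-r)-f(t))/r^\alpha=O(r^{1-\alpha})\to0$ because $\alpha<1$. The same local Lipschitz bound near $t$ shows that the integrand $(f(t)-f(t-s))/s^{\alpha+1}$ is dominated by $C_t s^{-\alpha}$ near $s=0$, hence integrable there; away from $0$ it is bounded by continuity of $f$ on $[a,T]$, so the improper integral on the right-hand side converges absolutely and equals the limit of the truncated integrals by dominated (or monotone) convergence. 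On the left-hand side, the assumption $f'\in L^1(a,T)$ together with the local boundedness of $f'$ near $t$ (so that $s^{-\alpha}f'(t-s)\in L^1(0,t-a)$) shows that $\int_r^{t-a}s^{-\alpha}f'(t-s)\,ds$ tends to $\int_0^{t-a}s^{-\alpha}f'(t-s)\,ds$. Dividing by $\Gamma(1-\alpha)$ gives the identity. The only subtle point, and the one I would be most careful about, is verifying that $(f(t-r)-f(t))/r^\alpha\to 0$; all other steps are algebraic rearrangement and routine convergence arguments.
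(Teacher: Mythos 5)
Your proof is correct and is essentially the paper's own argument: a single integration by parts in which the boundary contribution at the singular endpoint vanishes because $f\in C^{1}$ near $t$ gives $(f(t-r)-f(t))/r^{\alpha}=O(r^{1-\alpha})\to0$, followed by the change of variable $s=t-\tau$. The only cosmetic difference is that the paper absorbs $f(t)$ before integrating by parts (writing $f'(\tau)=\frac{d}{d\tau}(f(\tau)-f(t))$) and evaluates the bracket directly, whereas you truncate at $r$, telescope in $f(t)$ afterwards, and pass to the limit, which is just a more explicit rendering of the same computation.
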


\begin{proof}
The left-hand side (multiplied by $\Gamma(1-\alpha)$) can be calculated as
\begin{align*}
\int_a^t\frac{f'(\tau)}{(t-\tau)^{\alpha}}d\tau
&=\int_a^t\frac{\frac{d}{d\tau}(f(\tau)-f(t))}{(t-\tau)^{\alpha}}d\tau\\
&=\left[\frac{f(\tau)-f(t)}{(t-\tau)^{\alpha}}\right]_a^t-\alpha\int_a^t\frac{f(\tau)-f(t)}{(t-\tau)^{\alpha+1}}d\tau\\
&=\lim_{\tau\to t}\frac{f(\tau)-f(t)}{(t-\tau)^{\alpha}}+\frac{f(t)-f(a)}{(t-a)^{\alpha}}+\alpha\int_a^t\frac{f(t)-f(\tau)}{(t-\tau)^{\alpha+1}}d\tau.
\end{align*}
Thanks to the smoothness of $f$, the first term vanishes.
By the change of variable $s:=t-\tau$ we obtain the desired result. 
\end{proof}

Let us share some words for an integral
$$
I[f](t):=\int_{a}^{b}f(t,\tau)\frac{d\tau}{\tau^{\alpha+1}}
$$
for constants $a,b\in\mathbb{R}$ with $0\le a<b\le T$ and a measurable function $f:[0,T]\times[0,T]\to\mathbb{R}$.
We say that the integral $I[f]$ \emph{makes sense} if either $I[f^+]$ or $I[f^-]$ is finite (in the sense of Lebesgue integrals) and that $I[f]$ \emph{exists} if both $I[f^{\pm}]$ are finite.
Here $f^{\pm}:=\max\{\pm f,0\}$.
It is necessary to pay attention when $a=0$.
Then we regard $I[f]$ as an improper integral by $I[f](t)=\lim_{r\searrow0}I_r[f]$, where
$$
I_r[f](t)=\int_r^bf(t,\tau)\frac{d\tau}{\tau^{\alpha+1}}.
$$
Thus $I[f]$ exists if $I_r[f^{\pm}]$ are finite for each $r$ and $\lim_{r\searrow0}I_r[f^{\pm}]$ exist as a finite number.
Note that, if $\tau\mapsto |f(t,\tau)|/\tau^{\alpha+1}$ is integrable on $(0,b)$, then $I[f]$ exits and it agrees with the Lebesgue integral; this is a direct consequence of the dominated convergence theorem.  
We abuse above words not only for $J_r$ but also for $K_r$ including a non-integration term.

For a set $E\subset\mathbb{R}^{\ell}$ with $\ell\ge1$, let $USC(E)$ and $LSC(E)$ be sets of real-valued upper and lower semicontinuous functions on $E$, respectively. 
Note that semicontinuous functions are measurable.

\begin{proposition}[Properties of $J_r$ and $K_r$]
\label{p:propertyintegral}
Let $f\in USC([0,T])$ (resp. $LSC([0,T])$) and $g\in C^1((0,T])$. 
Then 
\begin{itemize}
\item[(i)] for each $t\in(0,T]$, $J_r[g](t)$ exists for all $r\in(0,t)$,
\item[(ii)] for each $t\in(0,T]$, $K_r[f](t)$ makes sense and is bounded from below (resp. above) for all $r\in(0,t)$, 
\item[(iii)] $K_0[f](\hat{t})$ makes sense and is bounded from below (resp. above) if $f-g$ attains a maximum (resp. minimum) at $\hat{t}\in (0,T]$ over $(0,T]$, i.e.,
$$
\sup_{(0,T]}(f-g)=(f-g)(\hat{t})\quad(\text{resp. $\inf_{(0,T]}(f-g)=(f-g)(\hat{t})$}),
$$
\end{itemize}
Moreover for each $j\ge0$ let $t_j\in(0,T]$, $r_j\in(0,t_j)$ and $\alpha_j\in(0,1)$ be sequences such that $\lim_{j\to\infty}(t_j,r_j,\alpha_j)=(\hat{t},\hat{r},\alpha)\in(0,T]\times\mathbb{R}^d\times[0,\hat{t})\times(0,1)$.
Let $J_r^{\alpha_j}$ denote a function $J_r$ associated with $\alpha=\alpha_j$.
Then
\begin{itemize}
\item[(iv)] $\lim_{j\to\infty}J_{r_j}^{\alpha_j}[g](t_j)=J_{\hat{r}}^{\alpha}[g](\hat{t})$.
\end{itemize}
\end{proposition}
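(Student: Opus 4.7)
My plan is to dispatch (i), (ii), and (iv) by standard integrability estimates and reserve the real work for (iii), where I must use the extremum hypothesis to tame the singularity at $\tau=0$ in the improper integral defining $K_0[f](\hat t)$.

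For (i), since $g\in C^{1}((0,T])$, on the closed subinterval $[t-r,t]\subset(0,T]$ one has $|g(t)-g(t-\tau)|\le C\tau$ with $C:=\sup_{[t-r,t]}|g'|$, so the integrand is bounded by $C/\tau^{\alpha}$, which is integrable on $(0,r)$ because $\alpha<1$; hence $J_r[g](t)$ exists as an ordinary Lebesgue integral. For (ii), a USC (resp.\ LSC) function on the compact set $[0,T]$ is bounded above (resp.\ below); in the USC case, writing $M:=\sup_{[0,T]}f$, one has $(f(t)-f(t-\tau))^{-}\le M-f(t)$, so the negative part of the integrand is bounded by $(M-f(t))/\tau^{\alpha+1}$, integrable on $[r,t]$ whenever $r>0$. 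Thus the integral lies in $(-\infty,+\infty]$, and combined with the finite prefactor term $(f(t)-f(0))/(t^{\alpha}\Gamma(1-\alpha))$, this yields $K_r[f](t)\in(-\infty,+\infty]$. The LSC case is symmetric.

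The main obstacle is (iii), where I must cope with the singularity of the improper integral at $\tau=0$ for a merely semicontinuous $f$. Here I would exploit the extremum hypothesis: since $(f-g)(\hat t)\ge(f-g)(\hat t-\tau)$ for every $\tau\in(0,\hat t]$, I obtain $f(\hat t)-f(\hat t-\tau)\ge g(\hat t)-g(\hat t-\tau)$, and I split
$$
\frac{f(\hat t)-f(\hat t-\tau)}{\tau^{\alpha+1}}=\underbrace{\frac{(f-g)(\hat t)-(f-g)(\hat t-\tau)}{\tau^{\alpha+1}}}_{\ge\,0}+\frac{g(\hat t)-g(\hat t-\tau)}{\tau^{\alpha+1}}.
$$
The second summand is integrable on $(0,\hat t)$ by part (i), and the first is non-negative, so monotone convergence applied to the first summand, together with Lebesgue integrability of the second, gives that $\lim_{r\searrow 0}\int_r^{\hat t}(f(\hat t)-f(\hat t-\tau))\,\tau^{-\alpha-1}\,d\tau$ exists in $(-\infty,+\infty]$. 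Adding back the finite prefactor term shows $K_0[f](\hat t)$ is a well-defined element of $(-\infty,+\infty]$, hence bounded from below; the minimum/LSC case is symmetric.

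Finally, for (iv) I would apply the dominated convergence theorem. Since $\hat r\in[0,\hat t)$, for $j$ large the intervals $(0,r_j)$ of integration are contained in a common bounded interval $(0,R)$, while $t_j-r_j\to\hat t-\hat r>0$, so the sample points $t_j-\tau$ eventually stay inside a fixed closed subinterval $[\delta,T]\subset(0,T]$ on which $|g'|\le C$; hence the integrands, extended by zero outside $(0,r_j)$, are dominated on $(0,R)$ by $C(\tau^{-\alpha_{\max}}+1)$, where $\alpha_{\max}<1$ is any eventual upper bound of $\alpha_j$, and this dominating function is integrable. Pointwise convergence of the integrands almost everywhere together with the continuity of $\alpha\mapsto\alpha/\Gamma(1-\alpha)$ then delivers $J_{r_j}^{\alpha_j}[g](t_j)\to J_{\hat r}^{\alpha}[g](\hat t)$.
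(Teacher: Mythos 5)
Parts (i), (ii) and (iv) of your argument are correct and essentially the paper's own: the local Lipschitz bound for $g$ on $[t-r,t]\subset(0,T]$ for (i), boundedness from above of the USC function $f$ for (ii), and dominated convergence with a $j$-uniform dominating function for (iv) (your explicit handling of the $\alpha_j$-dependence of the kernel via $\tau^{-\alpha_{\max}}$ is, if anything, a bit more careful than the paper's).

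The gap is in (iii). You decompose the integrand as $\bigl[(f-g)(\hat t)-(f-g)(\hat t-\tau)\bigr]\tau^{-\alpha-1}+\bigl[g(\hat t)-g(\hat t-\tau)\bigr]\tau^{-\alpha-1}$ on the whole interval $(0,\hat t)$ and assert that the second summand is integrable there ``by part (i)''. Part (i) only gives integrability on $(0,r)$ with $r<\hat t$: it controls the singularity at $\tau=0$ using the Lipschitz bound for $g$ on a compact subset of $(0,T]$ near $\hat t$, and says nothing about $\tau$ near $\hat t$, where $\hat t-\tau$ approaches $0$ and the hypothesis $g\in C^1((0,T])$ allows $g$ to blow up, even non-integrably. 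Concretely, $f\equiv 0$, $g(s)=1/s$, $\hat t=T$ satisfies the hypotheses of (iii), yet $\int_{T/2}^{T}|g(T)-g(T-\tau)|\,\tau^{-\alpha-1}d\tau=\infty$; in your decomposition the nonnegative first summand also has infinite integral near $\tau=T$, so you are implicitly adding $+\infty$ and $-\infty$ and the monotone/dominated convergence step collapses (the conclusion of (iii) is of course still true in this example, but your proof does not reach it). The paper avoids precisely this by a piecewise comparison function: the inequality $f(\hat t)-f(\hat t-\tau)\ge g(\hat t)-g(\hat t-\tau)$ is used only for $\tau\in[0,\hat t/2]$, where $g$ is Lipschitz, while on $(\hat t/2,\hat t]$ the negative part is bounded by $\max_{[0,T]}f-f(\hat t)$ exactly as in (ii), so the behavior of $g$ near $s=0$ never enters. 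Your argument becomes correct if you either split the domain at $\hat t/2$ in this way, or add the assumption that $g$ is integrable (e.g.\ continuous) up to $s=0$ --- which holds for the test class $\mathcal{C}^1([0,T]\times\mathbb{R}^d)$ used later in the paper, but is not part of the stated hypothesis $g\in C^1((0,T])$.
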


\begin{proof}
(i) Fix $t\in(0,T]$ and $r\in(0,t)$ arbitrarily.
Since $g$ is Lipschitz continuous near $t$ due to the smoothness of $g$, for some constant $C>0$
$$
\int_0^r|g(t)-g(t-\tau)|\frac{d\tau}{\tau^{\alpha+1}}\le \int_0^rC\tau\frac{d\tau}{\tau^{\alpha+1}}=\frac{Cr^{1-\alpha}}{1-\alpha}.
$$
Our assertion follows immediately from this.

(ii) Fix $t\in(0,T]$ and $r\in(0,t)$ arbitrarily.
Assume that $f\in USC([0,T])$.
Then $f$ attains a maximum and hence
$$
f(t)-\max_{[0,T]}f\le f(t)-f(t-\tau)
$$
for all $\tau\in(r,t)$.
The left-hand side multiplied by $\tau^{-\alpha-1}$ is integrable on $(r,t)$ since we integrate away from $\tau=0$.
Therefore the negative part $[f(t)-f(t-\tau)]^{-}/\tau^{\alpha+1}$ is integrable on $(r,t)$.
This implies that $K_r[f](t)$ makes sense and is bounded from below.
The similar argument is applied for $f\in LSC([0,T])$.
The above yields our assertion.

(iii) Define $h:=g+(f-g)(\hat{t})$ and
\begin{equation}
	v(\tau):=\begin{cases}
		h(\hat{t})-h(\hat{t}-\tau)\quad&\text{for $\tau\in[0,\hat{t}/2]$},\\
		f(\hat{t})-f(\hat{t}-\tau)\quad&\text{for $\tau\in(\hat{t}/2,\hat{t}]$.}
	\end{cases}
\end{equation}
Since $f-h$ attains a maximum at $\hat{t}$ over $(0,T]$, we see $f\le h$ on $(0,T]$.
In addition, $(f-h)(\hat{t})=0$ and thus
$$
h(\hat{t})-h(\hat{t}-\tau)\le f(\hat{t})-f(\hat{t}-\tau)
$$
on $(0,\hat{t})$.
By (i) and a similar argument as the proof of (ii) with $r=\hat{t}/2$ it turns out that the negative part $v^-(\tau)/\tau^{\alpha+1}$ in integrable on $(0,\hat{t})$, so is $[f(\hat{t})-f(\hat{t}-\tau)]^-/\tau^{\alpha+1}$ since $v(\tau)\le f(\hat{t})-f(\hat{t}-\tau)$ on $(0,\hat{t})$.
This yields our assertion for $f\in USC([0,T])$.
Another can be proved similarly.

(iv) Thanks to the smoothness of $g$ the dominated convergence theorem can be applied and ensures our assertion.
More precisely, since $\inf_{j\ge0}(t_j-r_j)>0$ and $g\in C^1((0,T])$, there exists a constant $C_1>0$ such that $|g(t_j)-g(t_j-\tau)|\le C_1\tau$ on $(0,r_j)$.
In particular, we may assume that $C_1$ does not depend on $j$ since $\lim_{j\to\infty}t_j=\hat{t}>0$.
Thus we have
\begin{equation}
\label{e:estimatetestfunction}
\sup_{j\ge0}(|g(t_j)-g(t_j-\tau)|\mathds{1}_{(0,r_j)}(\tau))\tau^{-\alpha-1}\le C_1\mathds{1}_{(0,\hat{r})}(\tau)\tau^{-\alpha}
\end{equation}
for all $\tau\in[0,T]$.
Here $\mathds{1}_I$ is the indicator function on an interval $I$, i.e., $\mathds{1}_I=1$ in $I$ and $0$ elsewhere. 
The right-hand side is integrable on $[0,T]$.
It remains to check the convergence of $(g(t_j)-g(t_j-\cdot))\mathds{1}_{[0,r_j]}(\cdot)$ but this is obvious.
\end{proof}

\subsection{Definition of solutions}
We now give our definition of viscosity solutions for \eqref{e:fhj}.

\begin{definition}[Viscosity solutions]
\label{d:vsol}
A function $u\in USC(\overline{Q_T})$ (resp. $LSC(\overline{Q_T})$) is called a \emph{viscosity subsolution} of \eqref{e:fhj} if, for any constants $a,b\in[0,T]$ with $a<b$ and an open ball $B$ in $\mathbb{R}^d$,
\begin{equation}
\label{e:subineq}
J_{\hat{t}-a}[\phi](\hat{t},\hat{x})+K_{\hat{t}-a}[u](\hat{t},\hat{x})+H(\hat{t},\hat{x},u(\hat{t},\hat{x}),D\phi(\hat{t},\hat{x}))\le0\quad(\text{resp. $\ge0$})
\end{equation}
whenever $u-\phi$ attains a maximum (resp. minimum) at $(\hat{t},\hat{x})\in (a,b]\times B$ over $[a,b]\times\overline{B}$ for $\phi\in\mathcal{C}^1([0,T]\times\mathbb{R}^d)$.

If $u\in C(\overline{Q_T})$ is both a viscosity sub- and supersolution of \eqref{e:fhj}, then we call $u$ a \emph{viscosity solution} of \eqref{e:fhj}.
\end{definition}

\begin{remark}
(i) For an arbitrary function $u:Q_T\to\mathbb{R}$ an \emph{upper semicontinuous envelope} $u^*:\overline{Q_T}\to\mathbb{R}\cup\{\pm\infty\}$ and a \emph{lower semicontinuous envelope} $u_*:\overline{Q_T}\to\mathbb{R}\cup\{\pm\infty\}$ are defined by
$$
u^*(t,x):=\lim_{\delta\searrow0}\sup\{u(s,y) \mid (s,y)\in Q_T\cap\overline{B_{\delta}(t,x)}\}
$$
and $u_*:=-(-u)^*$.
Here $B_{\delta}(t,x)$ is an open ball of radius $\delta$ centered at $(t,x)$ and $\overline{B_{\delta}(t,x)}$ is its closure.
As for $\alpha=1$ a viscosity sub- and subsolution of \eqref{e:fhj} can be defined for arbitrary functions $u:Q_T\to\mathbb{R}$ by using $u^*$ (for a subsolution) and $u_*$ (for a supersolution) in Definition \ref{d:vsol}, where it is further assumed that $u^*<+\infty$ and $u_*>-\infty$ on $\overline{Q_T}$; cf. \cite[Definition 2.1.1]{g}.
Note that functions $u^*$ and $u_*$ are upper semicontinuous and lower semicontinuous on $\overline{Q_T}$, respectively (see, e.g., \cite[Proposition V.2.1]{bcd}) so they are measurable.

(ii) Although we restrict ourselves for spatially periodic functions, our definition can be easily extended for $(0,T]\times\Omega$, where $\Omega$ is a domain in $\mathbb{R}^d$.
In fact, the comparison principle holds for a general bounded domain with necessary modifications. 
\end{remark}

If a viscosity subsolution (resp. supersolution) $u$ of \eqref{e:fhj} satisfies $u(0,\cdot)\le u_0$ (resp. $u(0,\cdot)\ge u_0$) on $\mathbb{T}^{d}$, $u$ is called a viscosity subsolution (resp. viscosity supersolution) of \eqref{e:fhj}-\eqref{e:init}.
We often suppress the word ``viscosity" unless confusion occurs.

\subsection{Properties and equivalences of solutions}

\begin{proposition}[Replacement of test functions]
\label{p:replacement}
A function $u\in USC(\overline{Q_T})$ (resp. $LSC(\overline{Q_T})$) is a subsolution (resp. supersolution) of \eqref{e:fhj} if and only if, for any $a,b\in[0,T]$ with $a<b$ and an open ball $B$ in $\mathbb{R}^d$, \eqref{e:subineq} holds whenever
\begin{itemize}
\item[(i)] $u-\phi$ attains a zero maximum (resp. minimum) at $(\hat{t},\hat{x})\in (a,b]\times B$ over $[a,b]\times\overline{B}$ for $\phi\in\mathcal{C}^1([0,T]\times\mathbb{R}^d)$ such that $(u-\phi)(\hat{t},\hat{x})=0$, or
\item[(ii)] $u-\phi$ attains a strict maximum (resp. minimum) at $(\hat{t},\hat{x})\in(a,b]\times B$ over $[a,b]\times\overline{B}$ for $\phi\in\mathcal{C}^1([0,T]\times\mathbb{R}^d)$, i.e., 
\begin{align*}
&\max_{[a,b]\times\overline{B}}(u-\phi)=(u-\phi)(\hat{t},\hat{x})>(u-\phi)(t,x)\\(\text{resp. }&\min_{[a,b]\times\overline{B}}(u-\phi)=(u-\phi)(\hat{t},\hat{x})<(u-\phi)(t,x))
\end{align*}
for all $(t,x)\in[a,b]\times\overline{B}$.
\item[(iii)] $u-\phi$ attains a maximum (resp. minimum) at $(\hat{t},\hat{x})\in(a,b]\times B$ over $[a,b]\times\overline{B}$ for $\phi\in\mathcal{C}^1([a,b]\times\overline{B})$.
\end{itemize}
\end{proposition}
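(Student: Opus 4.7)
The plan is to establish the three equivalences Def~$\Leftrightarrow$~(i), Def~$\Leftrightarrow$~(ii), Def~$\Leftrightarrow$~(iii) separately, treating only the subsolution case since the supersolution case is symmetric. Three of the six implications will be essentially trivial: Def~$\Rightarrow$~(i) and Def~$\Rightarrow$~(ii) hold because (i) and (ii) merely restrict the class of admissible test functions $\phi$; and (iii)~$\Rightarrow$~Def holds by restricting $\phi\in\mathcal{C}^1([0,T]\times\mathbb{R}^d)$ to $[a,b]\times\overline{B}$, since the resulting restriction lies in $\mathcal{C}^1([a,b]\times\overline{B})$ and the quantities $J_{\hat t-a}[\phi](\hat t,\hat x)$ and $D\phi(\hat t,\hat x)$ appearing in \eqref{e:subineq} are determined by $\phi$ on $[a,\hat t]\times\{\hat x\}$ and by the gradient at the interior point $\hat x\in B$ alone.

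For (i)~$\Rightarrow$~Def, given $\phi$ with $u-\phi$ attaining max $m:=(u-\phi)(\hat t,\hat x)$ on $[a,b]\times\overline{B}$, I would set $\psi:=\phi+m$, so that $u-\psi$ attains a zero max at $(\hat t,\hat x)$. Since the constant $m$ cancels in every difference inside $J_r$, we have $J_{\hat t-a}[\psi]=J_{\hat t-a}[\phi]$ and $D\psi=D\phi$, so (i) applied to $\psi$ yields \eqref{e:subineq} for $\phi$. For (ii)~$\Rightarrow$~Def, I would use the quadratic perturbation
$$
\psi_\epsilon(t,x):=\phi(t,x)+\epsilon\bigl(|t-\hat t|^2+|x-\hat x|^2\bigr),\qquad\epsilon>0,
$$
which makes the maximum strict at $(\hat t,\hat x)$ on $[a,b]\times\overline{B}$, while $D\psi_\epsilon(\hat t,\hat x)=D\phi(\hat t,\hat x)$ and a direct computation gives
$$
J_{\hat t-a}[\psi_\epsilon](\hat t,\hat x)=J_{\hat t-a}[\phi](\hat t,\hat x)-\frac{\alpha\epsilon(\hat t-a)^{2-\alpha}}{\Gamma(1-\alpha)(2-\alpha)}.
$$
Letting $\epsilon\searrow 0$ in the inequality supplied by (ii) then yields \eqref{e:subineq} for $\phi$.

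For Def~$\Rightarrow$~(iii), the plan is to produce a $\mathcal{C}^1([0,T]\times\mathbb{R}^d)$ extension $\tilde\phi$ of the given $\phi\in\mathcal{C}^1([a,b]\times\overline{B})$ with $\tilde\phi\equiv\phi$ on $[a,b]\times\overline{B}$; such an extension can be built by combining Whitney's $C^1$ extension theorem with a smooth spatial cutoff, noting that $\mathcal{C}^1$ demands only continuity (not $C^1$) up to $t=0$, so no special work at the initial time is required. Then $u-\tilde\phi$ attains a maximum at $(\hat t,\hat x)$ over $[a,b]\times\overline{B}$, and agreement on $[a,b]\times\overline{B}$ forces $J_{\hat t-a}[\tilde\phi](\hat t,\hat x)$, $D\tilde\phi(\hat t,\hat x)$, and $\tilde\phi(\hat t,\hat x)$ to equal those of $\phi$, so Definition~\ref{d:vsol} applied to $\tilde\phi$ gives \eqref{e:subineq} for $\phi$.

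The main technical step is the extension in Def~$\Rightarrow$~(iii): checking that $\tilde\phi$ truly lies in $\mathcal{C}^1([0,T]\times\mathbb{R}^d)$ requires handling the boundary of $[a,b]\times\overline{B}$ carefully. The remaining implications are routine perturbation arguments; the only new ingredient compared with the classical case $\alpha=1$ is the verification that the nonlocal operator $J_{\hat t-a}$ responds predictably to addition of a constant or a small quadratic, which follows directly from the integral form of $J_r$.
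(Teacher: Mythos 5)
Your decomposition into six implications, the identification of the trivial ones, and your arguments for (i)~$\Rightarrow$~Def (shift by the constant $(u-\phi)(\hat t,\hat x)$, which leaves $J_{\hat t-a}$ and $D\phi(\hat t,\hat x)$ unchanged) and (ii)~$\Rightarrow$~Def (quadratic perturbation, whose contribution to $J_{\hat t-a}$ is exactly $-\alpha\epsilon(\hat t-a)^{2-\alpha}/\bigl((2-\alpha)\Gamma(1-\alpha)\bigr)$ and vanishes as $\epsilon\searrow0$) coincide with the paper's proof of those parts.

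The gap is in Def~$\Rightarrow$~(iii). The extension $\tilde\phi\in\mathcal{C}^1([0,T]\times\mathbb{R}^d)$ with $\tilde\phi\equiv\phi$ on all of $[a,b]\times\overline{B}$ need not exist. Membership of $\phi$ in $\mathcal{C}^1([a,b]\times\overline{B})$ only requires $\phi\in C^1((a,b]\times\overline{B})\cap C([a,b]\times\overline{B})$ with $\partial_t\phi(\cdot,x)\in L^1(a,b)$, so $\partial_t\phi$ may blow up (integrably) as $t\searrow a$; the model example is $\phi(t,x)=(t-a)^{\alpha}$. If $a>0$, any function that agrees with such a $\phi$ on $[a,b]\times\overline{B}$ fails to be continuously differentiable at the time level $t=a$, which is an interior point of $(0,T]$, and therefore cannot lie in $\mathcal{C}^1([0,T]\times\mathbb{R}^d)$: that class tolerates loss of differentiability only at $t=0$. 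For the same reason Whitney's theorem does not apply, since $\phi$ is not $C^1$ in the Whitney sense on the closed set $[a,b]\times\overline{B}$. Your remark that ``no special work at the initial time is required'' addresses $t=0$, but the obstruction sits at $t=a$, and with it the identity $J_{\hat t-a}[\tilde\phi](\hat t,\hat x)=J_{\hat t-a}[\phi](\hat t,\hat x)$ that your argument needs cannot be arranged.

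The paper avoids the exact extension altogether: it glues $\phi$ to the large constant $M=\max_{\overline{Q_T}}u+1$ by a smooth partition of unity supported in $[\hat t-2\delta,\hat t]\times B_{2\delta}(\hat x)$ with $2\delta<\hat t-a$, so the glued function $\psi$ agrees with $\phi$ (up to an additive constant) only near $(\hat t,\hat x)$ and is smooth across $t=a$. The definition is then applied on the shorter window, giving \eqref{e:subineq} with $J_{\delta}[\psi]+K_{\delta}[u]$, and the full inequality is recovered from the maximum-point relation $u(\hat t,\hat x)-u(\hat t-\tau,\hat x)\ge\phi(\hat t,\hat x)-\phi(\hat t-\tau,\hat x)$ on $[0,\hat t-a]$, which yields $K_{\delta}[u](\hat t,\hat x)\ge K_{\hat t-a}[u](\hat t,\hat x)+\frac{\alpha}{\Gamma(1-\alpha)}\int_{\delta}^{\hat t-a}\bigl(\phi(\hat t,\hat x)-\phi(\hat t-\tau,\hat x)\bigr)\tau^{-\alpha-1}\,d\tau$ and hence $J_{\delta}[\psi]+K_{\delta}[u]\ge J_{\hat t-a}[\phi]+K_{\hat t-a}[u]$ at $(\hat t,\hat x)$. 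This monotonicity step (or an equivalent one, e.g.\ applying the definition on $[a',b]\times\overline{B}$ with $a'>a$ and then sending $a'\searrow a$ with the same inequality) is the essential ingredient missing from your proposal; without it, Def~$\Rightarrow$~(iii) is not proved.
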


\begin{proof}
We only prove for a subsolution since a similar argument is applied for a supersolution.
It is enough to prove `only if' parts of both assertions since `if' parts are obvious.

(i) Set $\psi:=\phi+(u-\phi)(\hat{t},\hat{x})$.
Then $u-\psi$ attains a maximum at $(\hat{t},\hat{x})$ over $[a,b]\times\overline{B}$ and $(u-\psi)(\hat{t},\hat{x})=0$.
Since $u$ is a subsolution of \eqref{e:fhj}, 
$$
J_{\hat{t}-a}[\psi](\hat{t},\hat{x})+K_{\hat{t}-a}[u](\hat{t},\hat{x})+H(\hat{t},\hat{x},u(\hat{t},\hat{x}),D\psi(\hat{t},\hat{x}))\le0.
$$
It is easy to verify from the definition of $J_r[\phi]$ that $J_{\hat{t}-a}[\psi](\hat{t},\hat{x})=J_{\hat{t}-a}[\phi](\hat{t},\hat{x})$.
Clearly, $D\psi(\hat{t},\hat{x})=D\phi(\hat{t},\hat{x})$, so that \eqref{e:subineq} holds.

(ii) For $j\ge0$ we set $\phi_j(t,x):=\phi(t,x)+j^{-1}|t-\hat{t}|^2+|x-\hat{x}|^2$ on $(0,T]\times\mathbb{R}^d$.
Then $\phi_j\in\mathcal{C}^1([0,T]\times\mathbb{R}^d)$ and $u-\phi_j$ attains a maximum at $(\hat{t},\hat{x})$ over $[a,b]\times\overline{B}$.
Since $u$ is a subsolution of \eqref{e:fhj},
$$
J_{\hat{t}-a}[\phi_j](\hat{t},\hat{x})+K_{\hat{t}-a}[u](\hat{t},\hat{x})+H(\hat{t},\hat{x},u(\hat{t},\hat{x}),D\phi_j(\hat{t},\hat{x}))\le0.
$$
By the definition of $\phi_j$ we have
\begin{align*}
J_{\hat{t}-a}[\phi_j](\hat{t},\hat{x})=J_{\hat{t}-a}[\phi](\hat{t},\hat{x})-\frac{\alpha}{j\Gamma(1-\alpha)}\int_0^{\hat{t}}\tau^2\frac{d\tau}{\tau^{\alpha+1}}
\end{align*}
The last integral in the right-hand side is clearly finite, so vanishes as $j\to\infty$.
Since $D\phi_j(\hat{t},\hat{x})=D\phi(\hat{t},\hat{x})$, we reach \eqref{e:subineq}.

(iii) Choose $\delta>0$ so that $2\delta<\hat{t}-a$ and $\overline{B_{2\delta}(\hat{x})}\subset B$.
Let $\xi_1,\xi_2:[0,T]\times\mathbb{R}^d\to[0,1]$ be $C^{\infty}$ functions such that $\xi_1+\xi_2=1$ in $[0,T]\times\mathbb{R}^d$,
$\xi_1=1$ on $[\hat{t}-\delta,\hat{t}]\times B_{\delta}(\hat{x})$ and
$\xi_2=1$ on $([0,T]\times\mathbb{R}^d)\setminus([\hat{t}-2\delta,\hat{t}]\times B_{2\delta}(\hat{x}))$.
Set $\psi:=\xi_1\phi+\xi_2 M+(u-\phi)(\hat{t},\hat{x})$ on $[0,T]\times\mathbb{R}^d$, where $M:=\max_{\overline{Q_T}}u+1$.
Then $\psi\in\mathcal{C}^1([0,T]\times\mathbb{R}^d)$ and $u-\psi$ attains a zero maximum $(\hat{t},\hat{x})$ over $[\hat{t}-\delta,\hat{t}]\times\overline{B}\subset[a,b]\times\overline{B}$.
Since $u$ is a subsolution of \eqref{e:fhj},
$$
J_{\delta}[\psi](\hat{t},\hat{x})+K_{\delta}[u](\hat{t},\hat{x})+H(\hat{t},\hat{x},u(\hat{t},\hat{x}),D\psi(\hat{t},\hat{x}))\le0.
$$
It is easy that $J_{\delta}[\psi](\hat{t},\hat{x})=J_{\delta}[\phi](\hat{t},\hat{x})$ and $D\psi(\hat{t},\hat{x})=D\phi(\hat{t},\hat{x})$.
Moreover, since $u(\hat{t},\hat{x})-u(\hat{t}-\tau)\ge \phi(\hat{t},\hat{x})-\phi(\hat{t}-\tau,\hat{x})$ on $[0,\hat{t}-a]$,
\begin{align*}
K_{\delta}[u](\hat{t},\hat{x})\ge K_{\hat{t}-a}[u](\hat{t},\hat{x})+\frac{\alpha}{\Gamma(1-\alpha)}\int_{\delta}^{\hat{t}-a}(\phi(\hat{t},\hat{x})-\phi(\hat{t}-\tau,\hat{x}))\frac{d\tau}{\tau^{\alpha+1}}.
\end{align*}
Thus we have $J_{\delta}[\psi](\hat{t},\hat{x})+K_{\delta}[u](\hat{t},\hat{x})\ge J_{\hat{t}-a}[\phi](\hat{t},\hat{x})+K_{\hat{t}-a}[u](\hat{t},\hat{x})$, which is nothing but \eqref{e:subineq}.
\end{proof}

\begin{remark}\label{r:addconstant}
By the similar way in the proof of (i) it turns out that, if $u$ is a subsolution (resp. supersolution) of \eqref{e:fhj}, then $u-C$ (resp. $u+C$) is a subsolution (resp. supersolution) of \eqref{e:fhj} for any positive constant $C>0$.
This is valid even for sub/supersolutions of \eqref{e:fhj}-\eqref{e:init}.  
Here a proof needs (A3).
\end{remark}

{In what follows we establish an equivalent definition of solution.
The similar fact is well known for pdes with space-fractional derivatives and integer-order time derivative, and it is utilized to obtain meaningful observations; see \cite{ai}, \cite{bi}, \cite{cs} and \cite{i}.
Even in our case the following fact is effective in obtaining results, especially the comparison theorem.}

\begin{lemma}[Equivalence]
\label{l:equivalence}
A function $u\in USC(\overline{Q_T})$ (resp. $LSC(\overline{Q_T})$) is a subsolution (resp. supersolution) of \eqref{e:fhj} if and only if
$K_0[u](\hat{t},\hat{x})$ exists and
\begin{equation}
\label{e:0equation}
K_0[u](\hat{t},\hat{x})+H(\hat{t},\hat{x},u(\hat{t},\hat{x}),D\phi(\hat{t},\hat{x}))\le0\quad(\text{resp. $\ge0$})
\end{equation}
whenever $u-\phi$ attains a maximum (resp. minimum) at $(\hat{t},\hat{x})\in(0,T]\times\mathbb{R}^d$ over $[0,T]\times\mathbb{R}^d$ for $\phi\in\mathcal{C}^1([0,T]\times\mathbb{R}^d)$.
\end{lemma}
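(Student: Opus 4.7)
The strategy is to prove each direction separately, handling only the subsolution case since the supersolution case is entirely symmetric. The key idea is that the family of inequalities in Definition \ref{d:vsol}, indexed by the parameter $r = \hat{t} - a$, is equivalent to the single inequality involving $K_0$: the forward direction is obtained by letting $r \searrow 0$, and the reverse direction is obtained by a cutoff-type construction that globalizes a local test function to one defined on all of $[0,T]\times\mathbb{R}^d$.

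For the forward implication, assume $u$ is a subsolution and $u - \phi$ attains its maximum over $[0, T] \times \mathbb{R}^d$ at $(\hat{t}, \hat{x})$. Taking $b = T$, $B$ any ball containing $\hat{x}$, and any $a \in (0, \hat{t})$, the same maximum is realized on $[a, b] \times \overline{B}$, so Definition \ref{d:vsol} yields
\[
J_r[\phi](\hat{t}, \hat{x}) + K_r[u](\hat{t}, \hat{x}) + H(\hat{t}, \hat{x}, u(\hat{t}, \hat{x}), D\phi(\hat{t}, \hat{x})) \le 0
\]
for $r := \hat{t} - a$. Letting $r \searrow 0$, the estimate $|\phi(\hat{t}, \hat{x}) - \phi(\hat{t}-\tau, \hat{x})| \le C\tau$ near $\hat{t}$ gives $J_r[\phi](\hat{t}, \hat{x}) \to 0$. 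For $K_r[u]$, I would split the integrand into positive and negative parts: the negative part is integrable on $(0, \hat{t})$ by Proposition \ref{p:propertyintegral}(iii), while the positive-part integral over $(r, \hat{t})$ is monotone in $r$ and so converges in $[0, \infty]$. The crucial observation is that the uniform upper bound $K_r[u] \le -H - J_r[\phi]$ coming from the inequality itself rules out the value $+\infty$, forcing the positive-part integral to be finite. Hence $K_0[u](\hat{t}, \hat{x})$ exists as a finite number, and passing to the limit yields \eqref{e:0equation}.

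For the reverse implication, given $a, b, B, \phi, \hat{t}, \hat{x}$ as in Definition \ref{d:vsol}, I would modify $\phi$ by the cutoff construction from Proposition \ref{p:replacement}(iii): choose smooth $\xi_1, \xi_2$ with $\xi_1 + \xi_2 = 1$, $\xi_1 \equiv 1$ on a small neighborhood of $(\hat{t}, \hat{x})$ contained in $(a, b] \times B$ and $\xi_2 \equiv 1$ off a slightly larger neighborhood, and set $\psi := \xi_1 \phi + \xi_2 M + (u - \phi)(\hat{t}, \hat{x})$. Since $u$ is bounded above on $\overline{Q_T}$ and $\phi$ is continuous on the intermediate compact region, $M$ can be taken large enough so that $u - \psi$ attains its global maximum over $[0, T] \times \mathbb{R}^d$ at $(\hat{t}, \hat{x})$; moreover $D\psi(\hat{t}, \hat{x}) = D\phi(\hat{t}, \hat{x})$ since $\xi_1$ is constantly $1$ near $(\hat{t}, \hat{x})$. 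Applying the hypothesis to $\psi$ gives $K_0[u](\hat{t}, \hat{x}) + H \le 0$. The decomposition
\[
K_0[u](\hat{t}, \hat{x}) = K_{\hat{t}-a}[u](\hat{t}, \hat{x}) + \frac{\alpha}{\Gamma(1-\alpha)} \int_0^{\hat{t}-a} (u(\hat{t}, \hat{x}) - u(\hat{t}-\tau, \hat{x})) \frac{d\tau}{\tau^{\alpha+1}},
\]
combined with the maximum-property inequality $u(\hat{t}, \hat{x}) - u(\hat{t}-\tau, \hat{x}) \ge \phi(\hat{t}, \hat{x}) - \phi(\hat{t}-\tau, \hat{x})$ for $\tau \in (0, \hat{t}-a)$ (since $(\hat{t}-\tau,\hat{x}) \in [a,b]\times\overline{B}$), shows that the remainder integral dominates $(\Gamma(1-\alpha)/\alpha) J_{\hat{t}-a}[\phi](\hat{t}, \hat{x})$, yielding \eqref{e:subineq}.

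The main obstacle is the finiteness of $K_0[u](\hat{t}, \hat{x})$ in the forward direction. The integrand has indefinite sign and $u$ is merely semicontinuous, so Proposition \ref{p:propertyintegral}(iii) by itself only controls one side. The argument genuinely uses the viscosity-subsolution inequality as an $r$-uniform upper bound to prevent the positive part from blowing up; this separation of the one-sided control from the semicontinuity and the opposite-sided control from the equation is the technical heart of the lemma.
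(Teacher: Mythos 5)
Your proposal is correct and takes essentially the same route as the paper: the ``only if'' direction applies Definition \ref{d:vsol} on $[a,\hat{t}]$ with $a\uparrow\hat{t}$ and obtains finiteness of $K_0[u](\hat{t},\hat{x})$ by combining the one-sided integrability from Proposition \ref{p:propertyintegral} (iii) with the $r$-uniform bound coming from the subsolution inequalities (the paper packages this through the auxiliary integrand $v_r$ and monotone convergence, while you split the integrand of $K_r[u]$ into its positive and negative parts directly --- the same mechanism). The ``if'' direction is the paper's argument verbatim: the cutoff construction of Proposition \ref{p:replacement} (iii) turns the local maximum into a global one, and the comparison $u(\hat{t},\hat{x})-u(\hat{t}-\tau,\hat{x})\ge\phi(\hat{t},\hat{x})-\phi(\hat{t}-\tau,\hat{x})$ on $[0,\hat{t}-a]$ converts the $K_0$ inequality into \eqref{e:subineq}.
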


\begin{proof}
We only prove for subsolutions since the similar argument is applied for supersolutions.

We first prove the `if' part.
To do so, let $a,b\in[0,T]$ with $a<b$ and an open ball $B$ in $\mathbb{R}^d$ fix arbitrarily.
Assume that $u-\phi$ attains a maximum at $(\hat{t},\hat{x})\in (a,b]\times B$ over $[a,b]\times \overline{B}$ for $\phi\in\mathcal{C}^1((0,T]\times\mathbb{R}^d)$.
Define $\psi\in\mathcal{C}^1([0,T]\times\mathbb{R}^d)$ similarly as in the proof of Proposition \ref{p:replacement} (iii).
Then $u-\psi$ attains a zero maximum $(\hat{t},\hat{x})$ over $[0,T]\times\mathbb{R}^d$.
Thus $K_0[u](\hat{t},\hat{x})$ exists and 
\begin{equation}
\label{e:goal2}
K_0[u](\hat{t},\hat{x})+H(\hat{t},\hat{x},u(\hat{t},\hat{x}),D\psi(\hat{t},\hat{x}))\le0.
\end{equation}
The relationship between $u$ and $\phi$ implies that
$$
u(\hat{t},\hat{x})-u(\hat{t}-\tau,\hat{x})\ge \phi(\hat{t},\hat{x})-\phi(\hat{t}-\tau,\hat{x})
$$
for all $[0,\hat{t}-a]$, which further yields $J_{\hat{t}-a}[u](\hat{t},\hat{x})\ge J_{\hat{t}-a}[\phi](\hat{t},\hat{x})$.
Since $D\psi(\hat{t},\hat{x})=D\phi(\hat{t},\hat{x})$ and $K_0[u](\hat{t},\hat{x})=J_{\hat{t}-a}[u](\hat{t},\hat{x})+K_{\hat{t}-a}[u](\hat{t},\hat{x})$, the assertion follows immediately.

To  prove the `only if' part we assume that $u-\phi$ attains a maximum at $(\hat{t},\hat{x})\in(0,T]\times\mathbb{R}^d$ over $[0,T]\times\mathbb{R}^d$ for $\phi\in\mathcal{C}^1([0,T]\times\mathbb{R}^d)$.
Set $\psi:=\phi+(u-\phi)(\hat{t},\hat{x})$ on $(0,T]\times\mathbb{R}^d$.
Let $r>0$ be a parameter such that $\hat{t}-r>0$.
Then $u-\psi$ attains a zero maximum at $(\hat{t},\hat{x})$ over $[\hat{t}-r,\hat{t}]\times\overline{B(\hat{x})}$ for all $r$, where $B(\hat{x})$ is an open ball centered at $\hat{x}$ in $\mathbb{R}^d$. 
Since $u$ is a subsolution of \eqref{e:fhj},
\begin{equation}
\label{e:limitofr}
J_r[\psi](\hat{t},\hat{x})+K_r[u](\hat{t},\hat{x})+H(\hat{t},\hat{x},u(\hat{t},\hat{x}),D\psi(\hat{t},\hat{x}))\le0
\end{equation}
for all $r$.
From Proposition \ref{p:propertyintegral} and its proof we know that $J_r[\psi](\hat{t},\hat{x})$ and $K_r[u^-](\hat{t},\hat{x})$ exist for each $r$ and moreover $\lim_{r\to0}J_r[\psi](\hat{t},\hat{x})=0$. 
Thus it is enough to show that $K_r[u^+]$ exists for each small $r$ and $\lim_{r\to0}K_r[u^{\pm}]=K_0[u^{\pm}]$ exist as a finite number.
Indeed, if this is proved, it means that $K_0[u](\hat{t},\hat{x})$ exists and \eqref{e:0equation} follows by passing to the limit $r\to0$ in \eqref{e:limitofr}. 

Define a function $v_r:[0,T]\to\mathbb{R}$ by
\begin{equation*}
	v_r(\tau)=\begin{cases}
		\psi(\hat{t},\hat{x})-\psi(\hat{t}-\tau,\hat{x})\quad&\text{for $\tau\in[0,\hat{t}-r)\times\mathbb{T}^d$,}\\
		u(\hat{t},\hat{x})-u(\hat{t}-\tau,\hat{x})\quad&\text{for $\tau\in[\hat{t}-r,\hat{t}]\times\mathbb{T}^d$.}
	\end{cases}
\end{equation*}
We rewrite \eqref{e:limitofr} as
\begin{equation}
\label{e:integrableofvr}
I[v_r]\le \frac{\Gamma(1-\alpha)}{\alpha}\left(-H(\hat{t},\hat{x},u(\hat{t},\hat{x}),D\psi(\hat{t},\hat{x}))-\frac{u(\hat{t},\hat{x})-u(0,\hat{x})}{\hat{t}^{\alpha}\Gamma(1-\alpha)}\right)=:C,
\end{equation}
where
$$
I[v_r](\hat{t},\hat{x})=\int_0^{\hat{t}}v_r(\tau)\frac{d\tau}{\tau^{\alpha+1}}.
$$
From the relationship between $u$ and $\psi$ we see
\begin{equation}
\label{e:uandpsi}
\psi(\hat{t},\hat{x})-\psi(\hat{t}-\tau,\hat{x})\le u(\hat{t},\hat{x})-u(\hat{t}-\tau,\hat{x})
\end{equation}
on $[0,\hat{t}-r]$.
Then it suffices to prove that $I[v_r^+]$ exists for each small $r$ and $\lim_{r\to0}I[v_r^{\pm}]=I[v_0^{\pm}]$ exists as a finite number. 

By the definition of $v_r$ and \eqref{e:uandpsi}, $v_r^+$ is monotone increasing with respect to $r$ in the sense that $v_{r_1}^+\le v_{r_2}^+$ on $[0,\hat{t}]$ if $r_{1}\ge r_2$.
The monotone convergence theorem implies that$ \lim_{r\to0}I[v_r^+]=I[v_0^+]$.
It is verified similarly as for $v_r^+$ that $v_r^-$ is monotone decreasing with respect to $r$ in the sense that $v_{r_1}^-\le v_{r_2}^-$ on $[0,\hat{t}]$ if $r_1\le r_2$.
Thus we have from \eqref{e:integrableofvr}
\begin{equation}
\label{e:conclusion}
I[v_{r_1}^+](\hat{t},\hat{x})\le I[v_{r_1}^-](\hat{t},\hat{x})+C\le I[v_{r_2}^-](\hat{t},\hat{x})+C.
\end{equation}
This implies that $I[v_r^+]$ exists for each small $r$ and $I[v_0^+]$ exists (as a finite number) by passing to the limit $r_1\to0$.
The monotone convergence theorem for $I[v_r^-]$ implies that $\lim_{r\to0}I[v_r^-]=I[v_0^-]$.
Therefore \eqref{e:conclusion} ensures that $I[v_0^-]$ exists (as a finite number).
The proof is now complete.

\end{proof}

\begin{proposition}[Consistency]
\label{p:consistency}
Assume that $u\in \mathcal{C}^{1}(\overline{Q_T})$.
Then $u$ is a classical solution of \eqref{e:fhj}-\eqref{e:init} if and only if $u$ is a viscosity solution of \eqref{e:fhj}-\eqref{e:init}.
\end{proposition}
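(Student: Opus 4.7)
The plan is to reduce everything to the observation that, for $u \in \mathcal{C}^1(\overline{Q_T})$, Proposition \ref{p:integrationbyparts} gives the pointwise identity $K_0[u](t,x) = (\partial_t^\alpha u)(t,x)$ on $Q_T$. Combined with the equivalent ``global'' formulation of viscosity solutions from Lemma \ref{l:equivalence}, the proposition becomes essentially a one-line check in each direction. The initial condition $u(0,\cdot) = u_0$ appears in both definitions and needs no separate treatment.

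For the direction ``classical $\Rightarrow$ viscosity'', assume $u$ is a classical solution. Since $u$ is $\mathbb{Z}^d$-periodic in $x$, we may view it as an element of $\mathcal{C}^1([0,T]\times\mathbb{R}^d)$. Let $\phi \in \mathcal{C}^1([0,T]\times\mathbb{R}^d)$ and suppose $u - \phi$ attains its maximum on $[0,T]\times\mathbb{R}^d$ at some $(\hat t, \hat x) \in (0,T]\times\mathbb{R}^d$. Since both $u$ and $\phi$ are $C^1$ in $x$ in a neighborhood of $\hat x$, the classical interior maximum condition gives $Du(\hat t, \hat x) = D\phi(\hat t, \hat x)$. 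Using the integration-by-parts identity $K_0[u](\hat t, \hat x) = (\partial_t^\alpha u)(\hat t, \hat x)$ and the pointwise PDE satisfied by $u$, we get
\[
K_0[u](\hat t, \hat x) + H(\hat t, \hat x, u(\hat t, \hat x), D\phi(\hat t, \hat x)) = (\partial_t^\alpha u)(\hat t, \hat x) + H(\hat t, \hat x, u(\hat t, \hat x), Du(\hat t, \hat x)) = 0,
\]
which verifies the viscosity subsolution inequality via Lemma \ref{l:equivalence}. The supersolution inequality at a minimum point is proved by the same computation (with equality rather than just inequality, of course).

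For the direction ``viscosity $\Rightarrow$ classical'', the trick is to take $\phi := u$ itself as a test function, which is admissible because $u \in \mathcal{C}^1([0,T]\times\mathbb{R}^d)$. Then $u - \phi \equiv 0$ attains its maximum and its minimum at every point of $[0,T]\times\mathbb{R}^d$. Fixing an arbitrary $(\hat t, \hat x) \in Q_T$ and applying Lemma \ref{l:equivalence} to $u$ in both roles yields simultaneously
\[
K_0[u](\hat t, \hat x) + H(\hat t, \hat x, u(\hat t, \hat x), Du(\hat t, \hat x)) \le 0 \quad \text{and} \quad K_0[u](\hat t, \hat x) + H(\hat t, \hat x, u(\hat t, \hat x), Du(\hat t, \hat x)) \ge 0,
\]
hence equality. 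Invoking Proposition \ref{p:integrationbyparts} once more to rewrite $K_0[u]$ as $\partial_t^\alpha u$ shows that $u$ solves \eqref{e:fhj} pointwise on $Q_T$.

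There is no real obstacle here; the only subtle point is making sure one is entitled to use Lemma \ref{l:equivalence}, which requires the max/min to be taken over all of $[0,T]\times\mathbb{R}^d$. In the first direction this is exactly how we set things up, and in the second direction it is automatic since $u - u \equiv 0$. The spatial periodicity of $u$ and the extension convention from the introduction make the use of $u$ as a global test function on $\mathbb{R}^d$ harmless.
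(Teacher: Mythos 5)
Your proposal is correct, and the key ingredients are the same as in the paper: Lemma \ref{l:equivalence} plus the identity $K_0[u]=\partial_t^\alpha u$ from Proposition \ref{p:integrationbyparts}. The ``viscosity $\Rightarrow$ classical'' direction (testing with $\phi\equiv u$) is literally the paper's argument. The only place you deviate is ``classical $\Rightarrow$ viscosity'': you verify the global characterization of Lemma \ref{l:equivalence} (its ``if'' part), where a maximum of $u-\phi$ over all of $[0,T]\times\mathbb{R}^d$ gives $Du(\hat t,\hat x)=D\phi(\hat t,\hat x)$ and hence $K_0[u](\hat t,\hat x)+H(\hat t,\hat x,u,D\phi)=0$, whereas the paper checks Definition \ref{d:vsol} directly: at a maximum over $[a,b]\times\overline{B}$ it uses the increment comparison to get $(\partial_t^\alpha u)(\hat t,\hat x)=K_0[u](\hat t,\hat x)\ge J_{\hat t-a}[\phi](\hat t,\hat x)+K_{\hat t-a}[u](\hat t,\hat x)$ together with the spatial maximum principle. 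Both routes are legitimate; yours offloads the localization work onto the already-proved ``if'' part of Lemma \ref{l:equivalence} and is slightly slicker (the inequality becomes an equality and no splitting estimate is needed), while the paper's version argues straight from the original definition. One small point you use implicitly and should state: Lemma \ref{l:equivalence} asks that $K_0[u](\hat t,\hat x)$ \emph{exists}, which in your setting is automatic because $u\in\mathcal{C}^1(\overline{Q_T})$ makes Proposition \ref{p:integrationbyparts} applicable, so $K_0[u]=\partial_t^\alpha u$ is finite there.
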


\begin{proof}
Assume that $u$ is a viscosity subsolution.
We may take $\phi\equiv u$ so that $u-\phi$ attains a maximum at every point in $Q_T$.
Since $u$ is a viscosity subsolution of \eqref{e:fhj}, Lemma \ref{l:equivalence} implies that
\begin{equation}
\label{e:classical}
K_0[u](t,x)+H(t,x,u(t,x),Du(t,x))\le0
\end{equation}
for all $(t,x)\in Q_T$.
Similarly, we have the reverse inequality of \eqref{e:classical} from an inequality by viscosity supersolution. 
This shows that $u$ is a classical solution since $K_0[u]=\partial_t^{\alpha}u$ by Proposition \ref{p:integrationbyparts} (ii).

On the contrary we assume that $u$ is a classical solution and that $u-\phi$ attains a maximum at $(\hat{t},\hat{x})\in(a,b]\times B$ over $[a,b]\times\overline{B}$ for $\phi\in \mathcal{C}^{1}([0,T]\times\mathbb{R}^d)$,
where $a,b\in(0,T]$ are constants and $B$ is an open ball in $\mathbb{R}^d$.
Since $(\partial_t^\alpha u)(\hat{t},\hat{x})=K_0[u](\hat{t},\hat{x})\ge J_{\hat{t}-a}[\phi](\hat{t},\hat{x})+K_{\hat{t}-a}[u](\hat{t},\hat{x})$,
to combine the maximum principle in space implies that $u$ is a viscosity subsolution.
It is similar for viscosity supersolutions.

Since an initial condition is easily verified, we obtain the conclusion.
\end{proof}

\section{Comparison Principle}\label{s:comp}
\begin{theorem}[Comparison principle]
\label{t:comp}
Assume that (A1)-(A3).
Let $u\in USC(\overline{Q_T})$ and $v\in LSC(\overline{Q_T})$ be a subsolution and a supersolution of \eqref{e:fhj}, respectively.
If $u(0,\cdot)\le v(0,\cdot)$ on $\mathbb{T}^{d}$, then $u\le v$ on $\overline{Q_T}$.
\end{theorem}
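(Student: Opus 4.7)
Plan. Assume for contradiction that $M := \max_{\overline{Q_T}}(u-v) > 0$. Since $u-v$ is USC on the compact set $\overline{Q_T}$ the max is attained, say at $(t^*,x^*)$, and the hypothesis $u(0,\cdot)\le v(0,\cdot)$ forces $t^*>0$. The strategy is the doubling-variable method applied to
$$\Phi_\varepsilon(t,x,s,y) := u(t,x) - v(s,y) - \frac{|x-y|^2 + |t-s|^2}{\varepsilon}, \qquad (t,x,s,y)\in(\overline{Q_T})^2.$$
By USC and compactness $\Phi_\varepsilon$ attains its max at some $(\hat t,\hat x,\hat s,\hat y)$, and standard estimates give $|\hat x-\hat y|^2/\varepsilon + |\hat t-\hat s|^2/\varepsilon \to 0$, convergence $(\hat t,\hat s,\hat x,\hat y)\to(t^*,t^*,x^*,x^*)$ (along a subsequence), $u(\hat t,\hat x)\to u(t^*,x^*)$, $v(\hat s,\hat y)\to v(t^*,x^*)$, and $|\hat x-\hat y|(1+|p_\varepsilon|)\to 0$ with $p_\varepsilon := 2(\hat x-\hat y)/\varepsilon$. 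In particular $\hat t,\hat s>0$ and $u(\hat t,\hat x)>v(\hat s,\hat y)$ for every $\varepsilon$ sufficiently small.

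Next I apply Lemma \ref{l:equivalence} with the test functions $\phi_1(t,x)=|t-\hat s|^2/\varepsilon + |x-\hat y|^2/\varepsilon$ for $u$ and $\phi_2(s,y)=-|\hat t-s|^2/\varepsilon-|\hat x-y|^2/\varepsilon$ for $v$ (having first chosen $\hat x$ as the representative in a fundamental domain closest to $\hat y$, so that coercivity of $\phi_1$ in $x$ together with $\mathbb{Z}^d$-periodicity of $u$ makes $(\hat t,\hat x)$ a global maximiser of $u-\phi_1$ on $[0,T]\times\mathbb{R}^d$, and analogously for $v$). Since $D\phi_1(\hat t,\hat x)=D\phi_2(\hat s,\hat y)=p_\varepsilon$, Lemma \ref{l:equivalence} yields
\begin{align*}
K_0[u](\hat t,\hat x)+H(\hat t,\hat x,u(\hat t,\hat x),p_\varepsilon)&\le 0,\\
K_0[v](\hat s,\hat y)+H(\hat s,\hat y,v(\hat s,\hat y),p_\varepsilon)&\ge 0.
\end{align*}
Subtracting and using (A3) (valid because $u(\hat t,\hat x)\ge v(\hat s,\hat y)$) give
$$K_0[u](\hat t,\hat x)-K_0[v](\hat s,\hat y)\le H(\hat s,\hat y,v(\hat s,\hat y),p_\varepsilon)-H(\hat t,\hat x,v(\hat s,\hat y),p_\varepsilon),$$
whose right-hand side tends to $0$ by (A2) (for the $y\mapsto x$ shift, via $\omega(|\hat x-\hat y|(1+|p_\varepsilon|))\to 0$) and by (A1) together with $|\hat t-\hat s|\to 0$.

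The remaining task is to bound $K_0[u](\hat t,\hat x)-K_0[v](\hat s,\hat y)$ from below by a strictly positive quantity in the limit. Split the difference into $A+B$ with
$$A := \frac{1}{\Gamma(1-\alpha)}\left(\frac{u(\hat t,\hat x)-u(0,\hat x)}{\hat t^{\alpha}}-\frac{v(\hat s,\hat y)-v(0,\hat y)}{\hat s^{\alpha}}\right)$$
and $B$ the integral piece. The max property of $\Phi_\varepsilon$ evaluated at the shifted point $(\hat t-\tau,\hat x,\hat s-\tau,\hat y)$ gives $u(\hat t,\hat x)-u(\hat t-\tau,\hat x)\ge v(\hat s,\hat y)-v(\hat s-\tau,\hat y)$ for $\tau\in[0,\min(\hat t,\hat s)]$, so $B$ decomposes into a non-negative part over $[0,\min(\hat t,\hat s)]$ plus a remainder over $[\min(\hat t,\hat s),\max(\hat t,\hat s)]$ of size $O(|\hat t-\hat s|/(t^*)^{\alpha+1})=o(1)$. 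For $A$, USC of $u$, LSC of $v$ and $u(0,\cdot)\le v(0,\cdot)$ give $\limsup(u(0,\hat x)-v(0,\hat y))\le u(0,x^*)-v(0,x^*)\le 0$, whereas $u(\hat t,\hat x)-v(\hat s,\hat y)\to M$ and $\hat t^\alpha,\hat s^\alpha\to(t^*)^\alpha$, hence $\liminf_{\varepsilon\to 0}A\ge M/((t^*)^\alpha \Gamma(1-\alpha))>0$. This contradicts the vanishing right-hand side above.

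The hard part will be handling the integral piece $B$: the one-sided integrals $\int_0^{\hat t}(u(\hat t,\hat x)-u(\hat t-\tau,\hat x))\tau^{-\alpha-1}\,d\tau$ are only improper and not a priori finite for merely USC $u$, so one must invoke Lemma \ref{l:equivalence} both to legitimise the existence of $K_0[u](\hat t,\hat x)$ at the maximiser and to compare the two integrals with different upper limits $\hat t\ne\hat s$. Unlike the space-fractional case, where the contradiction comes out of the time derivative of the test function while spatial quantities cancel, here the positive contribution must be squeezed out of the non-integral term $u(\hat t,\hat x)/\hat t^{\alpha}$ of $K_0$ and carefully preserved through the doubling limit; this is precisely the point the introduction flags as the structural difference from \cite{bi}, \cite{ai}, \cite{cs}, \cite{i}.
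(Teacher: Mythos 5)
Your overall mechanism for extracting the contradiction (bounding the integral part of $K_0[u]-K_0[v]$ from below via the maximum property along the shift $(\hat t-\tau,\hat x,\hat s-\tau,\hat y)$, and squeezing the positive quantity out of the non-integral terms divided by $\hat t^\alpha,\hat s^\alpha$) is the same one the paper uses, and your use of Lemma \ref{l:equivalence} to legitimise $K_0[u](\hat t,\hat x)$, $K_0[v](\hat s,\hat y)$ is correct. The genuine gap is in the Hamiltonian estimate. Because you double time and space simultaneously with a single parameter, after using (A3) and (A2) you are left with the time increment
$$H(\hat s,\hat x,v(\hat s,\hat y),p_\varepsilon)-H(\hat t,\hat x,v(\hat s,\hat y),p_\varepsilon),\qquad p_\varepsilon=\frac{2(\hat x-\hat y)}{\varepsilon},$$
and you claim it vanishes ``by (A1) together with $|\hat t-\hat s|\to0$''. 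This is not justified: (A1) is mere continuity, (A2) controls only the spatial increment, and the standard doubling estimates give only $|\hat x-\hat y|^2/\varepsilon\to0$, so $|p_\varepsilon|$ may blow up (e.g.\ $|\hat x-\hat y|\sim\varepsilon^{3/4}$ gives $|p_\varepsilon|\sim\varepsilon^{-1/4}$). With an unbounded gradient argument, continuity of $H$ gives no modulus for the increment in $t$; nothing in (A1)--(A3) plays the role of an assumption like $|H(t,x,r,p)-H(s,x,r,p)|\le\omega(|t-s|(1+|p|))$. So the step ``whose right-hand side tends to $0$'' fails as written.

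This is exactly why the paper organises the proof differently: in Theorem \ref{t:comp} the time variable is \emph{not} doubled (the auxiliary function is $u(t,x)-v(t,y)-|x-y|^2/(2\varepsilon)$ with a common $t$), so no time increment of $H$ ever appears — only the spatial increment at equal times and equal $r$, handled by (A2)--(A3). The time-doubling, which is unavoidable in order to access $K_0[u]-K_0[v]$ rigorously for semicontinuous $u,v$, is confined to Lemma \ref{l:ishiilemma}, where it is performed with a \emph{fixed} test function $\phi(t,x,y)$ plus vanishing quadratic corrections; there the gradient arguments converge to the fixed vectors $D_x\phi(\hat t,\hat x,\hat y)$, $-D_y\phi(\hat t,\hat x,\hat y)$, so mere continuity (A1) suffices, and the $K_0$ terms are carried through the inner limit by the semicontinuity/Fatou arguments. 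Your argument could be saved either by strengthening the hypotheses (a modulus in $t$ uniform in $p$ as above), or by decoupling the penalisations ($|x-y|^2/\varepsilon+|t-s|^2/\delta$, sending $\delta\to0$ first with $\varepsilon$ fixed so that the gradient stays bounded); but the latter forces you to pass $K_0[u]-K_0[v]$ through the inner limit, which reconstructs precisely the paper's Lemma \ref{l:ishiilemma}.
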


We shall prepare one lemma for a proof of Theorem \ref{t:comp}; see {\cite[Lemma 2]{cl2}}, \cite[Lemma 3.3]{ishii} and \cite[Lemma 1]{ishii2} for similar results for $\alpha=1$.
To do so we invoke a limit inferior/superior inequality of product of constant sequences that one of sequences is allowed to be negative.
The statement looks fundamental and the proof is standard but we give for the reader's convenience.

\begin{proposition}
\label{p:semicontinuityofproduct}
Let $\{f_{\varepsilon}\}_{\varepsilon>0}$ and $\{g_{\varepsilon}\}_{\varepsilon>0}$ be constant sequences.
Assume that $g_{\varepsilon}$ is nonnegative, $\liminf_{\varepsilon\to0}f_{\varepsilon}\ge f_0$ (resp. $\limsup_{\varepsilon\to0}f_{\varepsilon}\le f_0$) and $\lim_{\varepsilon\to0}g_{\varepsilon}=g_0$ for some constants $f_0$ and $g_0$.
Then
$$
\liminf_{\varepsilon\to0}f_{\varepsilon}g_{\varepsilon}\ge f_0g_0,\quad
(\text{resp. $\limsup_{\varepsilon\to0}f_{\varepsilon}g_{\varepsilon}\le f_0g_0$}.)
$$
\end{proposition}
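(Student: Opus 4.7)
The plan is to reduce both inequalities to a single auxiliary fact, and to dispose of the sign ambiguity of $f_0$ by an algebraic decomposition that isolates the problematic term. Concretely, I would write
$$
f_\varepsilon g_\varepsilon = f_0\,g_\varepsilon + (f_\varepsilon - f_0)\,g_\varepsilon
$$
and apply the superadditivity of $\liminf$ (resp. subadditivity of $\limsup$) together with $\lim_{\varepsilon\to 0} f_0\,g_\varepsilon = f_0 g_0$, which holds regardless of the sign of $f_0$ since $g_\varepsilon \to g_0$. In this way the whole problem is reduced to showing that, with $h_\varepsilon := f_\varepsilon - f_0$,
$$
\liminf_{\varepsilon\to 0} h_\varepsilon g_\varepsilon \ge 0 \quad \text{when } \liminf_{\varepsilon\to 0} h_\varepsilon \ge 0,
$$
and the analogous $\limsup$ statement in the other case.

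For the key auxiliary fact, I would argue as follows. Since $g_\varepsilon \to g_0 \ge 0$, the sequence is bounded, say $0 \le g_\varepsilon \le M$ for all sufficiently small $\varepsilon$. Given any $\delta > 0$, the hypothesis $\liminf h_\varepsilon \ge 0$ gives $h_\varepsilon > -\delta$ for $\varepsilon$ small enough. Then $h_\varepsilon g_\varepsilon \ge -\delta g_\varepsilon \ge -\delta M$ when $h_\varepsilon < 0$, while $h_\varepsilon g_\varepsilon \ge 0$ when $h_\varepsilon \ge 0$. Taking the liminf and then letting $\delta \searrow 0$ yields the claim. The dual case is symmetric: from $\limsup h_\varepsilon \le 0$ one obtains $h_\varepsilon < \delta$ eventually, whence $h_\varepsilon g_\varepsilon \le \delta M$ when $h_\varepsilon \ge 0$ and $h_\varepsilon g_\varepsilon \le 0$ when $h_\varepsilon \le 0$; pass to $\limsup$ and let $\delta \searrow 0$.

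The only delicate point, and arguably the main conceptual obstacle, is precisely what makes the statement worth recording: the usual inequality $\liminf(a_\varepsilon b_\varepsilon) \ge (\liminf a_\varepsilon)(\lim b_\varepsilon)$ requires nonnegativity of the liminf factor, which we do \emph{not} assume here since $f_0$ may be negative. The decomposition $f_\varepsilon g_\varepsilon = f_0 g_\varepsilon + (f_\varepsilon - f_0) g_\varepsilon$ is what sidesteps this: it shifts $f_\varepsilon$ so that the sign-sensitive factor $h_\varepsilon = f_\varepsilon - f_0$ has liminf $\ge 0$ (or limsup $\le 0$), at the price of the harmless linear term $f_0 g_\varepsilon$ whose limit is computed trivially. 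Everything else is a short $\delta$-argument using the boundedness of $g_\varepsilon$.
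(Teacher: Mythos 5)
Your proof is correct and takes essentially the same route as the paper: both decompose $f_{\varepsilon}g_{\varepsilon}$ by shifting $f_{\varepsilon}$ by (roughly) $f_0$ so that the sign-sensitive factor has nonnegative liminf, run a short $\delta$-argument, and treat the dual case by symmetry. The only cosmetic difference is that the paper folds the $\delta$ into the shift (taking $h_{\varepsilon}=f_{\varepsilon}-f_0+\delta\ge0$ for small $\varepsilon$) and invokes the liminf product inequality for nonnegative sequences, whereas you shift by exactly $f_0$ and verify the required one-sided bound directly from the boundedness of $g_{\varepsilon}$.
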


\begin{proof}
It is enough to prove the case when $\liminf_{\varepsilon\to0}f_{\varepsilon}\ge f_0$ since another case is proved by changing a sign of $f_{\varepsilon}$.
Then for any $\delta>0$ there exists $\varepsilon_{\delta}>0$ such that $f_{\varepsilon}\ge f_0-\delta$ for all $\varepsilon<\varepsilon_{\delta}$.
It is fundamental that
$$
\liminf_{\varepsilon\to0}h_{\varepsilon}g_{\varepsilon}\ge \liminf_{\varepsilon\to0}h_{\varepsilon}\liminf_{\varepsilon\to0}g_{\varepsilon}
$$
for nonnegative constants $h_{\varepsilon},g_{\varepsilon}$.
Applying this fact as $h_{\varepsilon}:=f_{\varepsilon}-f_0-\delta(\ge0)$ we see
\begin{align*}
\liminf_{\varepsilon\to0}f_{\varepsilon}g_{\varepsilon}
&\ge\liminf_{\varepsilon\to0}(f_{\varepsilon}-f_0-\delta)g_{\varepsilon}+\liminf_{\varepsilon\to0}(f_0+\delta)g_{\varepsilon}\\
&\ge -\delta g_0+(f_0+\delta)g_0=f_0g_0.
\end{align*}
\end{proof}

\begin{lemma}\label{l:ishiilemma}
Assume (A1).
Let $u\in USC(\overline{Q_T})$ and $v\in LSC(\overline{Q_T})$ be a subsolution and a supersolution of \eqref{e:fhj}, respectively.
Assume that $(t,x,y)\mapsto u(t,x)-v(t,y)-\phi(t,x,y)$ attains a maximum at $(\hat{t},\hat{x},\hat{y})\in (0,T]\times\mathbb{R}^d\times\mathbb{R}^d$
 over $[0,T]\times\mathbb{R}^d\times\mathbb{R}^d$ for $\phi\in\mathcal{C}^1([0,T]\times\mathbb{R}^d\times\mathbb{R}^d)$.
Then
$$
K_0[u](\hat{t},\hat{x})-K_0[v](\hat{t},\hat{y})+H(\hat{t},\hat{x},u(\hat{t},\hat{x}),D_x\phi(\hat{t},\hat{x},\hat{y}))-H(\hat{t},\hat{y},v(\hat{t},\hat{y}),-D_y\phi(\hat{t},\hat{x},\hat{y}))\le0.
$$
\end{lemma}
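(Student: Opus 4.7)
The plan is to apply Lemma \ref{l:equivalence} to $u$ and $v$ separately and subtract. If one could use the test functions $\phi(\cdot,\cdot,\hat{y})$ for $u$ and $-\phi(\cdot,\hat{x},\cdot)$ for $v$ directly, the conclusion would be immediate; however, the joint maximum only controls $u(t,x)-\phi(t,x,\hat{y})$ up to the lower semicontinuous perturbation $v(t,\hat{y})-v(\hat{t},\hat{y})$, which may be unbounded above near $\hat{t}$ and therefore cannot in general be dominated by a $\mathcal{C}^1$ function. To overcome this I would employ the standard Crandall--Ishii--Lions time-doubling device.

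For $\varepsilon>0$, consider
$$
\Psi_\varepsilon(t,s,x,y):=u(t,x)-v(s,y)-\phi(t,x,y)-\frac{(t-s)^2}{\varepsilon}
$$
on the compact set $[0,T]^2\times\mathbb{T}^d\times\mathbb{T}^d$. Being USC, $\Psi_\varepsilon$ attains a maximum at some $(t_\varepsilon,s_\varepsilon,x_\varepsilon,y_\varepsilon)$, and the standard compactness argument yields $(t_\varepsilon,s_\varepsilon,x_\varepsilon,y_\varepsilon)\to(\hat{t},\hat{t},\hat{x},\hat{y})$, $(t_\varepsilon-s_\varepsilon)^2/\varepsilon\to 0$, $u(t_\varepsilon,x_\varepsilon)\to u(\hat{t},\hat{x})$ and $v(s_\varepsilon,y_\varepsilon)\to v(\hat{t},\hat{y})$; in particular $t_\varepsilon,s_\varepsilon>0$ eventually. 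Freezing $(s,y)=(s_\varepsilon,y_\varepsilon)$ shows that $u-\psi^\varepsilon_u$ with the $\mathcal{C}^1$ test function $\psi^\varepsilon_u(t,x):=\phi(t,x,y_\varepsilon)+(t-s_\varepsilon)^2/\varepsilon$ attains a global maximum at $(t_\varepsilon,x_\varepsilon)$, so Lemma \ref{l:equivalence} gives
$$
K_0[u](t_\varepsilon,x_\varepsilon)+H\bigl(t_\varepsilon,x_\varepsilon,u(t_\varepsilon,x_\varepsilon),D_x\phi(t_\varepsilon,x_\varepsilon,y_\varepsilon)\bigr)\le 0.
$$
A symmetric freezing of $(t,x)=(t_\varepsilon,x_\varepsilon)$ yields the supersolution counterpart at $(s_\varepsilon,y_\varepsilon)$, and subtracting produces the desired inequality at the $\varepsilon$-level.

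Passing to the limit $\varepsilon\to 0$ is where the main work lies. The Hamiltonian terms converge by continuity. For the nonlocal terms one uses the pointwise estimates $\liminf_\varepsilon[u(t_\varepsilon,x_\varepsilon)-u(t_\varepsilon-\tau,x_\varepsilon)]\ge u(\hat{t},\hat{x})-u(\hat{t}-\tau,\hat{x})$ (USC of $u$) and $\limsup_\varepsilon[v(s_\varepsilon,y_\varepsilon)-v(s_\varepsilon-\tau,y_\varepsilon)]\le v(\hat{t},\hat{y})-v(\hat{t}-\tau,\hat{y})$ (LSC of $v$) together with a Fatou-type argument on the combined integrand. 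The delicate point is that the natural pointwise lower bound on the integrand of $K_0[u]-K_0[v]$ obtained from the two Lemma \ref{l:equivalence} inequalities contains a term $-2\tau^2/\varepsilon$ that is not uniformly integrable in $\varepsilon$; fortunately the $2\tau(t_\varepsilon-s_\varepsilon)/\varepsilon$ contributions coming from differentiating the penalty on the $u$- and $v$-sides cancel exactly. This is the main obstacle, and I would resolve it by truncating the integrals to $(r,\hat{t})$ for $r>0$, passing $\varepsilon\to 0$ on the truncated piece (on which the lower bound becomes uniform), and then sending $r\searrow 0$ by monotone convergence, exactly in the spirit of the proof of Lemma \ref{l:equivalence}. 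The boundary terms $[u(t_\varepsilon,x_\varepsilon)-u(0,x_\varepsilon)]/t_\varepsilon^\alpha$ and $[v(s_\varepsilon,y_\varepsilon)-v(0,y_\varepsilon)]/s_\varepsilon^\alpha$ are handled analogously via USC/LSC, yielding Lemma \ref{l:ishiilemma}.
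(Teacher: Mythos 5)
Your overall architecture is the same as the paper's: double the time variable with a quadratic penalty, freeze each pair of variables, apply Lemma \ref{l:equivalence} to obtain the sub- and supersolution inequalities, subtract, and pass to the limit by splitting the nonlocal terms near and away from $\tau=0$ (semicontinuity plus Fatou away from the singularity, then $r\searrow0$ by monotonicity). However, the step you yourself call the main obstacle is not resolved by what you propose. Your lower bound on the combined integrand comes from using the two touching test functions \emph{separately}, and, as you compute, after the cross terms $2\tau(t_\varepsilon-s_\varepsilon)/\varepsilon$ cancel you are left with $-2\tau^2/\varepsilon$. Truncating to $(r,\hat t)$ does not make this term disappear: the subtracted viscosity inequality involves the full quantities $K_0[u](t_\varepsilon,x_\varepsilon)-K_0[v](s_\varepsilon,y_\varepsilon)$, so the $(0,r)$ portion remains in the estimate, and with your bound it contributes
$$
-\frac{2\alpha}{\Gamma(1-\alpha)}\int_0^r\frac{\tau^2}{\varepsilon}\,\frac{d\tau}{\tau^{\alpha+1}}
=-\frac{2\alpha\, r^{2-\alpha}}{(2-\alpha)\Gamma(1-\alpha)\,\varepsilon},
$$
which tends to $-\infty$ as $\varepsilon\to0$ for fixed $r$ and makes the limiting inequality vacuous. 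Letting $r=r(\varepsilon)\to0$ does not help either, since the Fatou argument on $(r(\varepsilon),\cdot)$ then loses its integrable minorant (semicontinuity only gives a constant lower bound for the integrand, and $\int_{r(\varepsilon)}\tau^{-\alpha-1}d\tau$ blows up). The missing idea, which is the heart of the paper's proof, is to estimate the $(0,r)$ piece by comparing the doubled function at $(t_\varepsilon,s_\varepsilon,x_\varepsilon,y_\varepsilon)$ with its value at $(t_\varepsilon-\tau,s_\varepsilon-\tau,x_\varepsilon,y_\varepsilon)$, i.e.\ shifting \emph{both} time variables by the same $\tau$: the penalty, being a function of $t-s$ only, then cancels identically (not just its linear part), giving
$$
u(t_\varepsilon,x_\varepsilon)-u(t_\varepsilon-\tau,x_\varepsilon)-\bigl(v(s_\varepsilon,y_\varepsilon)-v(s_\varepsilon-\tau,y_\varepsilon)\bigr)\ge\phi(t_\varepsilon,x_\varepsilon,y_\varepsilon)-\phi(t_\varepsilon-\tau,x_\varepsilon,y_\varepsilon),
$$
a bound uniform in $\varepsilon$, after which your truncation--Fatou--monotone-convergence scheme does go through.

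A secondary, fixable point: without localization terms such as $|t-\hat t|^2+|x-\hat x|^2+|y-\hat y|^2$ in the penalization (as in the paper), your claim that $(t_\varepsilon,s_\varepsilon,x_\varepsilon,y_\varepsilon)\to(\hat t,\hat t,\hat x,\hat y)$ is unjustified when the maximum of $u(t,x)-v(t,y)-\phi(t,x,y)$ is not strict; the limit points are only known to be \emph{some} maximizer, possibly with vanishing time coordinate. One can either add the localization (the extra quadratic terms do not change $D_x\phi$, $D_y\phi$ at $(\hat t,\hat x,\hat y)$), or reduce to a strict maximum beforehand; but then the added terms generate the perturbed gradients and the small constant $C_r$ that the paper tracks, and they must appear in your estimates as well.
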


\begin{proof}
We shall show that there exists a constant $C_r>0$ such that $C_r\to0$ as $r\to0$ and
\begin{equation}
\label{e:mostlygoal}
\begin{aligned}
&-C_r+J_r[\phi](\hat{t},\hat{x},\hat{y})+K_r[u](\hat{t},\hat{x})+K_r[v](\hat{t},\hat{y})\\
&+H(\hat{t},\hat{x},u(\hat{t},\hat{x}),D_x\phi(\hat{t},\hat{x},\hat{y}))-H(\hat{t},\hat{y},v(\hat{t},\hat{y}),-D_y\phi(\hat{t},\hat{x},\hat{y}))\le0
\end{aligned}
\end{equation}
for all $r\in(0,\hat{t})$.
If this is clarified, passing to the limit $r\to0$ in \eqref{e:mostlygoal} yields the desired result by repeating the `only if' in the proof of Lemma \eqref{l:equivalence}.
Henceforth, let $r\in(0,\hat{t})$ fix arbitrarily.

For $\varepsilon>0$ we consider a function $\Phi:[0,T]\times[0,T]\times\mathbb{R}^d\times\mathbb{R}^d\to\mathbb{R}$ defined by
$$
\Phi(t,s,x,y)=u(t,x)-v(s,y)-\phi(t,x,y)-\frac{|t-s|^2}{2\varepsilon}-|t-\hat{t}|^2-|x-\hat{x}|^2-|y-\hat{y}|^2.
$$
Since $\Phi\to-\infty$ as $|x|,|y|\to+\infty$ and $\Phi$ is bounded from above, it attains a maximum at a point $(t_{\varepsilon},s_{\varepsilon},x_{\varepsilon},y_{\varepsilon})\in[0,T]\times[0,T]\times\mathbb{R}^d\times\mathbb{R}^d$.
By following the standard argument of the theory of viscosity solutions we obtain
\begin{equation}
\label{e:wellknown}
	\begin{cases}
		(t_{\varepsilon},s_{\varepsilon},x_{\varepsilon},y_{\varepsilon})\to(\hat{t},\hat{t},\hat{x},\hat{y}),\\
		u(t_{\varepsilon},x_{\varepsilon})\to u(\hat{t},\hat{x})\text{ and $v(s_{\varepsilon},y_{\varepsilon})\to v(\hat{t},\hat{y})$}
	\end{cases}
\end{equation}
as $\varepsilon\to0$ by taking a subsequence if necessary; see \cite[Theorem II.3.1]{bcd} and \cite[Lemma 3.1]{cil} for detail.
Note that $t_{\varepsilon}>0$ for sufficiently small $\varepsilon$ since $\hat{t}>0$.

For such a small parameter $\varepsilon$, $(t,x)\mapsto\Phi(t,s_{\varepsilon},x,y_{\varepsilon})$ attains a maximum at $(t_{\varepsilon},x_{\varepsilon})\in (0,T]\times\mathbb{R}^d$ over $[0,T]\times\mathbb{R}^d$ and
 $(s,y)\mapsto-\Phi(t_{\varepsilon},s,x_{\varepsilon},y)$ attains a minimum at $(s_{\varepsilon},y_{\varepsilon})\in (0,T]\times\mathbb{R}^d$ over $[0,T]\times\mathbb{R}^d$.
Since $u$ and $v$ are respectively a subsolution and a supersolution of \eqref{e:fhj}, Lemma \ref{l:equivalence} implies that $K_0[u](t_{\varepsilon},x_{\varepsilon}),K_0[v](s_{\varepsilon},y_{\varepsilon})$ exist for each $\varepsilon$ and 
\begin{align}
K_0[u](t_{\varepsilon},x_{\varepsilon})+H(t_{\varepsilon},x_{\varepsilon},u(t_{\varepsilon},x_{\varepsilon}),D_x\phi(t_{\varepsilon},x_{\varepsilon},y_{\varepsilon})+2(x_{\varepsilon}-\hat{x}))\le0,\label{e:sub}\\
K_0[v](s_{\varepsilon},y_{\varepsilon})+H(s_{\varepsilon},y_{\varepsilon},v(s_{\varepsilon},y_{\varepsilon}),-D_y\phi(s_{\varepsilon},x_{\varepsilon},y_{\varepsilon})-2(y_{\varepsilon}-\hat{y}))\ge0.\label{e:super}
\end{align}
Thus, by subtracting \eqref{e:super} from \eqref{e:sub}, we see
\begin{equation}
\begin{aligned}
\label{e:takenlimitinf}
&K_0[u](t_{\varepsilon},x_{\varepsilon})-K_0[v](s_{\varepsilon},y_{\varepsilon})\\
&+H(t_{\varepsilon},x_{\varepsilon},u(t_{\varepsilon},x_{\varepsilon}),D_x\phi(t_{\varepsilon},x_{\varepsilon},y_{\varepsilon})+2(x_{\varepsilon}-\hat{x}))\\
&-H(s_{\varepsilon},y_{\varepsilon},v(s_{\varepsilon},y_{\varepsilon}),-D_y\phi(s_{\varepsilon},x_{\varepsilon},y_{\varepsilon})-2(y_{\varepsilon}-\hat{y}))\le0
\end{aligned}
\end{equation}
for each $\varepsilon$.

We shall pass to the limit $\varepsilon\to0$ in \eqref{e:takenlimitinf}.
For Hamiltonians it is easily seen thanks to (A1), \eqref{e:wellknown} and the smoothness of $\phi$ that
\begin{align*}
&H(t_{\varepsilon},x_{\varepsilon},u(t_{\varepsilon},x_{\varepsilon}),D_x\phi(t_{\varepsilon},x_{\varepsilon},y_{\varepsilon})+2(x_{\varepsilon}-\hat{x}))\\
&-H(s_{\varepsilon},y_{\varepsilon},v(s_{\varepsilon},y_{\varepsilon}),-D_y\phi(s_{\varepsilon},x_{\varepsilon},y_{\varepsilon})-2(y_{\varepsilon}-\hat{y}))\\
&\to H(\hat{t},\hat{x},u(\hat{t},\hat{x}),D_x\phi(\hat{t},\hat{x},\hat{y}))-H(\hat{t},\hat{y},v(\hat{t},\hat{y}),-D_y\phi(\hat{t},\hat{x},\hat{y}))
\end{align*}
as $\varepsilon\to0$.
Let us focus on $K_0[u](t_{\varepsilon},x_{\varepsilon})-K_0[v](s_{\varepsilon},y_{\varepsilon})$.
Assume hereafter that $\varepsilon$ is so small that $r<\min\{t_{\varepsilon},s_{\varepsilon}\}$ for all $\varepsilon$, which is possible since $(t_{\varepsilon},s_{\varepsilon})\to(\hat{t},\hat{t})$ as $\varepsilon\to0$ (see \eqref{e:wellknown}).
Set
\begin{align*}
&I_{1,\varepsilon}:=\frac{u(t_{\varepsilon},x_{\varepsilon})-u(0,x_{\varepsilon})}{t_{\varepsilon}^{\alpha}\Gamma(1-\alpha)}-\frac{v(s_{\varepsilon},y_{\varepsilon})-v(0,y_{\varepsilon})}{s_{\varepsilon}^{\alpha}\Gamma(1-\alpha)},\\
&I_{2,\varepsilon}:=\int_0^r(u(t_{\varepsilon},x_{\varepsilon})-u(t_{\varepsilon}-\tau,x_{\varepsilon}))\frac{d\tau}{\tau^{\alpha+1}}
-\int_0^r(v(s_{\varepsilon},y_{\varepsilon})-v(s_{\varepsilon}-\tau,y_{\varepsilon}))\frac{d\tau}{\tau^{1+\alpha}},
\end{align*}
and
$$
I_{3,\varepsilon}:=\int_r^{t_{\varepsilon}}(u(t_{\varepsilon},x_{\varepsilon})-u(t_{\varepsilon}-\tau,x_{\varepsilon}))\frac{d\tau}{\tau^{\alpha+1}}
-\int_r^{s_{\varepsilon}}(v(s_{\varepsilon},y_{\varepsilon})-v(s_{\varepsilon}-\tau,y_{\varepsilon}))\frac{d\tau}{\tau^{1+\alpha}}
$$
so that $K_0[u](t_{\varepsilon},x_{\varepsilon})-K_0[v](s_{\varepsilon},y_{\varepsilon})=I_{3,\varepsilon}+\alpha(I_{1,\varepsilon}+I_{2,\varepsilon})/\Gamma(1-\alpha)$.

First, for $I_{1,\varepsilon}$, Proposition \ref{p:semicontinuityofproduct} with $f_{\varepsilon}:=u(t_{\varepsilon},x_{\varepsilon})-u(0,x_{\varepsilon})-v(s_{\varepsilon},y_{\varepsilon})+v(0,y_{\varepsilon})$ and $g_{\varepsilon}:=(t_{\varepsilon}^{\alpha}\Gamma(1-\alpha))^{-1}$ implies that
\begin{equation}
\label{e:I1}
\liminf_{\varepsilon\to0}I_{1,\varepsilon}\ge \frac{u(\hat{t},\hat{x})-u(0,\hat{x})}{\hat{t}^{\alpha}\Gamma(1-\alpha)}-\frac{v(\hat{t},\hat{y})-v(0,\hat{y})}{\hat{t}^{\alpha}\Gamma(1-\alpha)}.
\end{equation}

Next, since 
\begin{align*}
&u(t_{\varepsilon},x_{\varepsilon})-u(t_{\varepsilon}-\tau,x_{\varepsilon})-(v(s_{\varepsilon},y_{\varepsilon})-v(s_{\varepsilon}-\tau,y_{\varepsilon}))\\
&\ge \phi(t_{\varepsilon},x_{\varepsilon},y_{\varepsilon})-\phi(t_{\varepsilon}-\tau,x_{\varepsilon},y_{\varepsilon})+|t_{\varepsilon}-\hat{t}|^2-|t_{\varepsilon}-\tau-\hat{t}|^2
\end{align*}
for all $\tau\in[0,r]$ by the inequality $\Phi(t_{\varepsilon},s_{\varepsilon},x_{\varepsilon},y_{\varepsilon})\ge\Phi(t_{\varepsilon}-\tau,s_{\varepsilon}-\tau,x_{\varepsilon},y_{\varepsilon})$,
we see
$$
\frac{\alpha}{\Gamma(1-\alpha)}I_{2,\varepsilon}\ge J_r[\phi](t_{\varepsilon},x_{\varepsilon},y_{\varepsilon})+J_r[|t_{\varepsilon}-\hat{t}-\cdot|^2](t_{\varepsilon}).
$$
Proposition \ref{p:propertyintegral} (iv) ensures that $\lim_{\varepsilon\to0}J_r[\phi](t_{\varepsilon},x_{\varepsilon},y_{\varepsilon})=J_r[\phi](\hat{t},\hat{x},\hat{y})$.
Besides, $J_r[|t_{\varepsilon}-\hat{t}-\cdot|^2](t_{\varepsilon})$ can be calculated precisely as 
\begin{align*}
J_r[|t_{\varepsilon}-\hat{t}-\cdot|^2](t_{\varepsilon})
&=\frac{\alpha}{\Gamma(1-\alpha)}\int_0^r(|t_{\varepsilon}-\hat{t}|^2-|t_{\varepsilon}-\hat{t}-\tau|^2)\frac{d\tau}{\tau^{1+\alpha}}\\
&=\frac{\alpha}{\Gamma(1-\alpha)}\int_0^r(2(t_{\varepsilon}-\hat{t})\tau-\tau^2)\frac{d\tau}{\tau^{1+\alpha}}\\
&=\frac{\alpha}{\Gamma(1-\alpha)}\left(\frac{2(t_{\varepsilon}-\hat{t})r^{1-\alpha}}{1-\alpha}-\frac{r^{2-\alpha}}{2-\alpha}\right).
\end{align*}
Hence
$$
\lim_{\varepsilon\to0}J_r[|t_{\varepsilon}-\hat{t}-\cdot|^2](t_{\varepsilon})=-\frac{\alpha r^{2-\alpha}}{(2-\alpha)\Gamma(1-\alpha)}=:-C_r.
$$
Note that $C_r\to0$ as $r\to0$.
Therefore we know for $I_{2,\varepsilon}$ that
\begin{equation}
\label{e:I2}
\liminf_{\varepsilon\to0}\frac{\alpha}{\Gamma(1-\alpha)}I_{2,\varepsilon}\ge -C_r+J_r[\phi](\hat{t},\hat{x},\hat{y}).
\end{equation}

Finally, for $I_{3,\varepsilon}$, we first see an existence of constants $C_1,C_2>0$ independent of $\varepsilon$ such that
\begin{equation}
\label{e:integrability}
\begin{aligned}
&(u(t_{\varepsilon},x_{\varepsilon})-u(t_{\varepsilon}-\tau,x_{\varepsilon}))\mathds{1}_{(r,t_{\varepsilon})}(\tau)\ge -C_1\mathds{1}_{(r,T)}(\tau),\\
&(v(s_{\varepsilon},y_{\varepsilon})-v(s_{\varepsilon}-\tau,y_{\varepsilon}))\mathds{1}_{(r,s_{\varepsilon})}(\tau)\le C_2\mathds{1}_{(r,T)}(\tau)
\end{aligned}
\end{equation}
on $[0,T]$.
Indeed, since $\lim_{\varepsilon\to0}u(t_{\varepsilon},x_{\varepsilon})=u(\hat{t},\hat{x})$, there is a constant $C>0$ independent of $\varepsilon$
\begin{align*}
(u(t_{\varepsilon},x_{\varepsilon})-u(t_{\varepsilon}-\tau,x_{\varepsilon}))\mathds{1}_{(r,t_{\varepsilon})}(\tau)
&\ge (u(\hat{t},\hat{x})-C-\max_{\overline{Q_T}}u)\mathds{1}_{(r,t_{\varepsilon})}(\tau)\\
&\ge -|u(\hat{t},\hat{x})-C-\max_{\overline{Q_T}}u|\mathds{1}_{(r,T)}.
\end{align*}
This shows the above one of \eqref{e:integrability} and another is proved similarly.
Note that both right-hand sides of \eqref{e:integrability} multiplied by $\tau^{-\alpha-1}$ is integrable on $[0,T]$.
Proposition \ref{p:semicontinuityofproduct} implies that
\begin{align*}
&\liminf_{\varepsilon\to0}(u(t_{\varepsilon},x_{\varepsilon})-u(t_{\varepsilon}-\cdot,x_{\varepsilon}))\mathds{1}_{(r,t_{\varepsilon})}(\cdot)
\ge(u(\hat{t},\hat{x})-u(\hat{t}-\cdot,\hat{x}))\mathds{1}_{(r,\hat{t})}(\cdot),\\
&\limsup_{\varepsilon\to0}(v(s_{\varepsilon},y_{\varepsilon})-v(s_{\varepsilon}-\cdot,y_{\varepsilon}))\mathds{1}_{(r,s_{\varepsilon})}(\cdot)
\le(v(\hat{t},\hat{x})-v(\hat{t}-\cdot,\hat{x}))\mathds{1}_{(r,\hat{t})}(\cdot)
\end{align*}
for each $\tau\in(0,T)$.
Thus Fatou's lemma yields
\begin{equation}
\label{e:I3}
\liminf_{\varepsilon\to0}I_{3,\varepsilon}\ge \int_r^{\hat{t}}(u(\hat{t},\hat{x})-u(\hat{t}-\tau,\hat{x}))\frac{d\tau}{\tau^{1+\alpha}}
-\int_r^{\hat{t}}(v(\hat{t},\hat{y})-v(\hat{t}-\tau,\hat{y}))\frac{d\tau}{\tau^{1+\alpha}}.
\end{equation}

Summing up \eqref{e:I1}, \eqref{e:I2} and \eqref{e:I3} we reach
$$
\liminf_{\varepsilon\to0}(K_0[u](t_{\varepsilon},x_{\varepsilon})-K_0[v](s_{\varepsilon},y_{\varepsilon}))\ge -C_r+J_r[\phi](\hat{t},\hat{x},\hat{y})+K_r[u](\hat{t},\hat{x})-K_r[v](\hat{t},\hat{x}).
$$
Consequently, taking the limit inferior to both sides of \eqref{e:takenlimitinf} yields the desired inequality \eqref{e:mostlygoal}.
\end{proof}

\begin{proof}[Proof of Theorem \ref{t:comp}]
Suppose that the conclusion were false: $\max_{\overline{Q_T}}(u-v)=:\theta>0$.
For $\varepsilon>0$ we consider a function $\Phi:[0,T]\times\mathbb{R}^{d}\times\mathbb{R}^{d}\to\mathbb{R}$ defined by
$$
\Phi(t,x,y):=u(t,x)-v(t,y)-\frac{|x-y|^{2}}{2\varepsilon}.
$$

Let $(t_{\varepsilon},x_{\varepsilon},y_{\varepsilon})\in[0,T]\times\mathbb{R}^{d}\times\mathbb{R}^{d}$ be a maximum point of $\Phi$.
Then there is a point $(\hat{t},\hat{x})\in(0,T]\times\mathbb{R}^d$ such that
\begin{equation}
\label{e:tool0}
	\begin{cases}
		(t_{\varepsilon},x_{\varepsilon},y_{\varepsilon})\to(\hat{t},\hat{x},\hat{x}),\\
		|x_{\varepsilon}-y_{\varepsilon}|^{2}/\varepsilon\to0,\\
		u(t_{\varepsilon},x_{\varepsilon})\to u(\hat{t},\hat{x})\text{ and $v(s_{\varepsilon},y_{\varepsilon})\to v(\hat{t},\hat{x})$.}
	\end{cases}
\end{equation}
 as $\varepsilon\to0$ by taking a subsequence if necessary; see, e.g., \cite[Theorem II.3.1]{bcd}.
The above permits to use Lemma \ref{l:ishiilemma} and we know that $K_0[u](t_{\varepsilon},x_{\varepsilon}),K_0[v](t_{\varepsilon},y_{\varepsilon})$ exists for each $\varepsilon$ and
\begin{equation}
\label{e:goal}
K_0[u](t_{\varepsilon},x_{\varepsilon})-K_0[v](t_{\varepsilon},y_{\varepsilon})+H(t_{\varepsilon},x_{\varepsilon},u(t_{\varepsilon},x_{\varepsilon}),p_{\varepsilon})-H(t_{\varepsilon},y_{\varepsilon},v(t_{\varepsilon},y_{\varepsilon}),p_{\varepsilon})\le0.
\end{equation}
Here $p_{\varepsilon}=(x_{\varepsilon}-y_{\varepsilon})/\varepsilon$.

Since $u(t_{\varepsilon},x_{\varepsilon})-u(t_{\varepsilon}-\cdot,x_{\varepsilon})-v(t_{\varepsilon},y_{\varepsilon})+v(t_{\varepsilon}-\cdot,y_{\varepsilon})\ge0$ on $[0,t_{\varepsilon}]$ by the inequality $\Phi(t_{\varepsilon},x_{\varepsilon},y_{\varepsilon})\ge\Phi(t_{\varepsilon}-\tau,x_{\varepsilon},y_{\varepsilon})$, the term of integration in $K_0[u](t_{\varepsilon},x_{\varepsilon})-K_0[v](t_{\varepsilon},y_{\varepsilon})$ is estimated from below by zero, that is,
$$
K_0[u](t_{\varepsilon},x_{\varepsilon})-K_0[v](t_{\varepsilon},y_{\varepsilon})
\ge\frac{u(t_{\varepsilon},x_{\varepsilon})-v(t_{\varepsilon},y_{\varepsilon})-u(0,x_{\varepsilon})+v(0,y_{\varepsilon})}{t_{\varepsilon}^{\alpha}\Gamma(1-\alpha)}
$$
Since $u(t_{\varepsilon},x_{\varepsilon})>v(t_{\varepsilon},y_{\varepsilon})$ by the inequality $\Phi(t_{\varepsilon},x_{\varepsilon},y_{\varepsilon})\ge\theta>0$, Hamiltonians in \eqref{e:goal} are estimated as
\begin{align*}
&H(t_{\varepsilon},x_{\varepsilon},u(t_{\varepsilon},x_{\varepsilon}),p_{\varepsilon})-H(t_{\varepsilon},y_{\varepsilon},v(t_{\varepsilon},y_{\varepsilon}),p_{\varepsilon})\\
&\ge H(t_{\varepsilon},x_{\varepsilon},v(t_{\varepsilon},y_{\varepsilon}),p_{\varepsilon})-H(t_{\varepsilon},y_{\varepsilon},v(t_{\varepsilon},y_{\varepsilon}),p_{\varepsilon})\ge-\omega(|x_{\varepsilon}-y_{\varepsilon}|(1+|p_{\varepsilon}|))
\end{align*}
by (A2) and (A3).
From these, \eqref{e:goal} is led to
$$
\frac{u(t_{\varepsilon},x_{\varepsilon})-v(t_{\varepsilon},y_{\varepsilon})-u(0,x_{\varepsilon})+v(0,y_{\varepsilon})}{t_{\varepsilon}^{\alpha}\Gamma(1-\alpha)}\le\omega(|x_{\varepsilon}-y_{\varepsilon}|(1+|p_{\varepsilon}|)).
$$
Taking the limit inferior $\varepsilon\to0$ implies that
$$
\frac{\theta-u(0,\hat{x})+v(0,\hat{x})}{\hat{t}^{\alpha}\Gamma(1-\alpha)}\le0
$$
by Proposition \ref{p:semicontinuityofproduct}.
Since $u(0,\cdot)\le v(0,\cdot)$ on $\mathbb{T}^{d}$ and $\theta>0$, this is a contradiction.
\end{proof}

\begin{corollary}[Uniqueness]
\label{c:unique}
Assume (A1)-(A4).
Let $u\in C(\overline{Q_T})$ and $v\in C(\overline{Q_T})$ be solutions of \eqref{e:fhj}.
Then
\begin{equation}
\label{e:comp}
\max_{(t,x)\in\overline{Q_T}}|u(t,x)-v(t,x)|\le\max_{x\in\mathbb{T}^d}|u(0,x)-v(0,x)|.
\end{equation}
Moreover, if $u$ and $v$ are solutions of \eqref{e:fhj}-\eqref{e:init}, then $u\equiv v$ on $\overline{Q_T}$.
\end{corollary}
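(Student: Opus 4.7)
The plan is to reduce the estimate \eqref{e:comp} to a direct application of the comparison principle (Theorem \ref{t:comp}) after shifting one of the solutions by an additive constant. Set
\[
M := \max_{x \in \mathbb{T}^d} |u(0,x) - v(0,x)|,
\]
which is finite by (A4) and the continuity of $u$ and $v$. Then on $\mathbb{T}^d$ we have $u(0,\cdot) \le v(0,\cdot) + M$ and, symmetrically, $v(0,\cdot) \le u(0,\cdot) + M$.

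The first step is to observe that $v + M$ is a viscosity supersolution of \eqref{e:fhj}. This is precisely the content of Remark \ref{r:addconstant}: the $K_r$ and $J_r$ operators annihilate additive constants in the unknown/test function, so the only potential new term comes from the Hamiltonian, and (A3) ensures $H(t,x,v+M,p) \ge H(t,x,v,p)$, preserving the supersolution inequality. By the same reasoning $u + M$ is a supersolution (and of course $u$, $v$ remain subsolutions since they are solutions).

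Now I apply Theorem \ref{t:comp} twice. First, comparing the subsolution $u$ with the supersolution $v + M$: since $u(0,\cdot) \le (v+M)(0,\cdot)$ on $\mathbb{T}^d$, the comparison principle gives $u \le v + M$ on $\overline{Q_T}$, i.e.\ $u - v \le M$. Swapping the roles of $u$ and $v$ in exactly the same argument yields $v - u \le M$ on $\overline{Q_T}$. Combining the two inequalities gives $|u-v| \le M$ pointwise on $\overline{Q_T}$, and taking the maximum over $\overline{Q_T}$ establishes \eqref{e:comp}.

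The second assertion is immediate: if $u$ and $v$ are both solutions of \eqref{e:fhj}--\eqref{e:init}, then $u(0,\cdot) = v(0,\cdot) = u_0$, hence $M = 0$ and \eqref{e:comp} forces $u \equiv v$ on $\overline{Q_T}$. There is no genuine obstacle here; the only point to be careful about is the use of (A3) when invoking Remark \ref{r:addconstant} to promote $v + M$ (resp.\ $u + M$) to a supersolution, but this is already handled by the remark and does not require any additional argument.
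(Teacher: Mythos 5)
Your proof is correct and takes essentially the same approach as the paper: set $M=\max_{\mathbb{T}^d}|u(0,\cdot)-v(0,\cdot)|$, use Remark \ref{r:addconstant} (which relies on (A3)) to promote the shifted function to a supersolution, apply Theorem \ref{t:comp} twice, and take the maximum. The only cosmetic difference is that the paper shifts $v$ by $\pm C$ (comparing $u$ with $v+C$ and $v-C$ with $u$), whereas you shift the supersolution side upward in each of your two comparisons; the substance is identical.
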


\begin{proof}
It suffices to prove \eqref{e:comp}.
Set $C:=\max_{x\in\mathbb{T}^d}|u(0,x)-v(0,x)|$.
Then $v-C$ and $v+C$ are a subsolution and a supersolution of \eqref{e:fhj}, respectively; see Remark \ref{r:addconstant}.
Moreover 
$$
v(0,\cdot)-C\le u(0,\cdot)\le v(0,\cdot)+C\quad\text{on $\mathbb{T}^d$}
$$
by the definition of $C$.
Thus, from Theorem \ref{t:comp}, we have $|u-v|\le C$ on $\overline{Q_T}$.
The proof is complete by taking the maximum over $\overline{Q_T}$ to both sides.
\end{proof}

For the reader's convenience we give a statement of the comparison principle for a general bounded domain $\Omega$ without a proof.

\begin{theorem}
Let $\Omega$ be a bounded domain in $\mathbb{R}^d$.
Let $u\in USC([0,T]\times\overline{\Omega};\mathbb{R})$ and $v\in LSC([0,T]\times\overline{\Omega};\mathbb{R})$ be a subsolution and a supersolution of \eqref{e:fhj} on $(0,T]\times\overline{\Omega}$, respectively.
If $u\le v$ on $(\{0\}\times\overline{\Omega})\cup([0,T]\times\partial\Omega)$, then $u\le v$ on $[0,T]\times\overline{\Omega}$.
\end{theorem}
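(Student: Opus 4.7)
The plan is to adapt the doubling-variables argument of Theorem \ref{t:comp} to the bounded-domain setting, using the boundary/initial hypothesis only to confine the doubled maximizer to the interior of the parabolic cylinder. First, I would argue by contradiction: suppose $M:=\max_{[0,T]\times\overline{\Omega}}(u-v)>0$, which is attained at some $(t^{*},x^{*})\in[0,T]\times\overline{\Omega}$ by compactness and semicontinuity. Since $u\le v$ on the parabolic boundary $(\{0\}\times\overline{\Omega})\cup([0,T]\times\partial\Omega)$, the inequality $(u-v)(t^{*},x^{*})=M>0$ forces $t^{*}>0$ and $x^{*}\in\Omega$.

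Next I would introduce the auxiliary function
$$
\Phi_{\varepsilon}(t,x,y)=u(t,x)-v(t,y)-\frac{|x-y|^{2}}{2\varepsilon}
$$
on the compact set $[0,T]\times\overline{\Omega}\times\overline{\Omega}$, pick a maximizer $(t_{\varepsilon},x_{\varepsilon},y_{\varepsilon})$, and apply exactly the same selection lemma used in \eqref{e:tool0} to extract a subsequence converging to $(\hat{t},\hat{x},\hat{x})$ with $|x_{\varepsilon}-y_{\varepsilon}|^{2}/\varepsilon\to0$ and $u(t_{\varepsilon},x_{\varepsilon})\to u(\hat{t},\hat{x})$, $v(t_{\varepsilon},y_{\varepsilon})\to v(\hat{t},\hat{x})$, where $(\hat{t},\hat{x})$ maximizes $u-v$. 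By the first step we have $\hat{t}>0$ and $\hat{x}$ in the interior of $\Omega$; hence for all sufficiently small $\varepsilon$, the points $t_{\varepsilon}>0$ and $x_{\varepsilon},y_{\varepsilon}\in\Omega$ are interior. This is the crucial use of the boundary hypothesis.

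At that stage the same proof as Lemma \ref{l:ishiilemma} applies verbatim (the argument is purely local at the maximum point and the test-function perturbations stay inside $\Omega$ for $\varepsilon$ small), giving existence of $K_{0}[u](t_{\varepsilon},x_{\varepsilon})$, $K_{0}[v](t_{\varepsilon},y_{\varepsilon})$ and
$$
K_{0}[u](t_{\varepsilon},x_{\varepsilon})-K_{0}[v](t_{\varepsilon},y_{\varepsilon})+H(t_{\varepsilon},x_{\varepsilon},u(t_{\varepsilon},x_{\varepsilon}),p_{\varepsilon})-H(t_{\varepsilon},y_{\varepsilon},v(t_{\varepsilon},y_{\varepsilon}),p_{\varepsilon})\le0
$$
with $p_{\varepsilon}=(x_{\varepsilon}-y_{\varepsilon})/\varepsilon$. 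From here I would repeat the endgame of Theorem \ref{t:comp} line by line: the maximization property $\Phi_{\varepsilon}(t_{\varepsilon},x_{\varepsilon},y_{\varepsilon})\ge\Phi_{\varepsilon}(t_{\varepsilon}-\tau,x_{\varepsilon},y_{\varepsilon})$ yields $u(t_{\varepsilon},x_{\varepsilon})-u(t_{\varepsilon}-\tau,x_{\varepsilon})\ge v(t_{\varepsilon},y_{\varepsilon})-v(t_{\varepsilon}-\tau,y_{\varepsilon})$ on $[0,t_{\varepsilon}]$, so the integral contributions to $K_{0}[u]-K_{0}[v]$ are nonnegative; assumptions (A2), (A3) and the inequality $u(t_{\varepsilon},x_{\varepsilon})>v(t_{\varepsilon},y_{\varepsilon})$ bound the Hamiltonian difference below by $-\omega(|x_{\varepsilon}-y_{\varepsilon}|(1+|p_{\varepsilon}|))$; and Proposition \ref{p:semicontinuityofproduct} delivers in the limit $\varepsilon\to0$
$$
\frac{M-u(0,\hat{x})+v(0,\hat{x})}{\hat{t}^{\alpha}\Gamma(1-\alpha)}\le0,
$$
contradicting $M>0$ and the initial inequality $u(0,\hat{x})\le v(0,\hat{x})$.

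The one step that is genuinely more than bookkeeping is the second paragraph: showing that the doubled maximizer stays interior so that Lemma \ref{l:ishiilemma} is available. Once interiority is secured from the boundary hypothesis, the proof is a mechanical transcription of the periodic case, and no further assumptions on $\partial\Omega$ (regularity, barrier functions, etc.) are needed.
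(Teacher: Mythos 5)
Your proof is correct and is precisely the ``necessary modifications'' the paper alludes to: the theorem is stated there without proof, deferring to the periodic case (Theorem \ref{t:comp} via Lemma \ref{l:ishiilemma}). Your key point---that $u\le v$ on $(\{0\}\times\overline{\Omega})\cup([0,T]\times\partial\Omega)$ forces every maximizer of $u-v$ with positive value, and hence for small $\varepsilon$ the doubled maximizers, into $(0,T]\times\Omega$, after which the torus argument (including the adapted Lemma \ref{l:equivalence} and Lemma \ref{l:ishiilemma}) only uses information in small spatial neighborhoods of the maximizer and in time along $[0,\hat{t}]$ at fixed interior points---is exactly what is needed, and no boundary regularity or barriers are required for comparison.
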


\section{Existence result}

Let denote by $S^{-}$ and $S^{+}$ a set of upper semicontinuous subsolutions and lower semicontinuous supersolutions of \eqref{e:fhj}, respectively.
Note that $S^\pm\neq\emptyset$ as will be observed in Corollary \ref{cor:constructbarrier} later.

\begin{lemma}[Closedness under supremum/infimum operator]
\label{l:closedness}
Assume (A1).
Let $X$ be a nonempty subset of $S^{-}$ (resp. $S^{+}$).
Define 
$$
u(t,x):=\sup\{ v(t,x) \mid v\in X\}\quad\text{(resp. $\inf\{v(t,x) \mid v\in X\}$)}
$$
for $(t,x)\in \overline{Q_T}$.
Assume that $u^*<+\infty$ (resp. $u_*>-\infty$) on $\overline{Q_T}$.
Then $u^*$ (resp. $u_*$) is a subsolution (resp. supersolution) of \eqref{e:fhj}.
\end{lemma}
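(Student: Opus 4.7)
The plan is to use the equivalent characterization from Lemma \ref{l:equivalence}: it suffices to show that whenever $u^*-\phi$ attains a maximum at some $(\hat{t},\hat{x})\in(0,T]\times\mathbb{R}^d$ over $[0,T]\times\mathbb{R}^d$ for $\phi\in\mathcal{C}^1([0,T]\times\mathbb{R}^d)$, then $K_0[u^*](\hat{t},\hat{x})$ exists and
$$
K_0[u^*](\hat{t},\hat{x})+H(\hat{t},\hat{x},u^*(\hat{t},\hat{x}),D\phi(\hat{t},\hat{x}))\le0.
$$
As in Proposition \ref{p:replacement}(ii) I would first replace $\phi$ by $\phi(t,x)+|t-\hat{t}|^2+|x-\hat{x}|^2$ so that the maximum is strict, and as in Proposition \ref{p:replacement}(iii) localize via cutoffs to work on a fixed compact cylinder $[0,T]\times\overline{B}$ with $\hat{x}\in B$, which is compatible with spatial periodicity.

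Next I would extract a diagonal sequence. By the definition of $u^*$ there exist $(s_j,y_j)\to(\hat{t},\hat{x})$ with $u(s_j,y_j)\to u^*(\hat{t},\hat{x})$, and by the definition of $u$ I can pick $w_j\in X$ satisfying $w_j(s_j,y_j)\ge u(s_j,y_j)-1/j$, so $w_j(s_j,y_j)\to u^*(\hat{t},\hat{x})$. Since each $w_j\in USC([0,T]\times\overline{B})$, the function $w_j-\phi$ attains its maximum at some $(t_j,x_j)\in[0,T]\times\overline{B}$. Using $w_j\le u^*$, the strict-maximum property of $u^*-\phi$ at $(\hat{t},\hat{x})$, and upper semicontinuity, a standard argument gives $(t_j,x_j)\to(\hat{t},\hat{x})$ and $w_j(t_j,x_j)\to u^*(\hat{t},\hat{x})$; in particular $(t_j,x_j)\in(0,T]\times B$ for large $j$. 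Applying Lemma \ref{l:equivalence} to $w_j$ (after a harmless cutoff so that the max is over $[0,T]\times\mathbb{R}^d$) yields that $K_0[w_j](t_j,x_j)$ exists and
\begin{equation}
\label{e:plan-subsol}
K_0[w_j](t_j,x_j)+H(t_j,x_j,w_j(t_j,x_j),D\phi(t_j,x_j))\le0.
\end{equation}

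The remaining step is to pass to the limit inferior in \eqref{e:plan-subsol}. The Hamiltonian term converges by (A1) and the continuity of $D\phi$. For the first piece of $K_0[w_j](t_j,x_j)$, I would combine $w_j(0,x_j)\le u^*(0,x_j)$ with upper semicontinuity of $u^*$ and Proposition \ref{p:semicontinuityofproduct} to get a liminf bounded below by the corresponding difference quotient of $u^*$ at $(\hat{t},\hat{x})$. For the nonlocal integral, I would use the inequality $w_j(t_j,x_j)-w_j(t_j-\tau,x_j)\ge\phi(t_j,x_j)-\phi(t_j-\tau,x_j)\ge -C\tau$ near $\tau=0$ (from the Lipschitz estimate on $\phi$) together with the crude pointwise bound $w_j(t_j-\tau,x_j)\le u^*(t_j-\tau,x_j)$ away from $\tau=0$. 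Writing
$$
w_j(t_j,x_j)-w_j(t_j-\tau,x_j)\ge w_j(t_j,x_j)-u^*(t_j-\tau,x_j)
$$
and invoking Fatou's lemma with the $\tau$-integrable lower bound just obtained, the upper semicontinuity of $u^*$ produces
$$
\liminf_{j\to\infty}K_0[w_j](t_j,x_j)\ge K_0[u^*](\hat{t},\hat{x}),
$$
where the finiteness of the right-hand side follows because $K_0[u^*](\hat{t},\hat{x})$ is bounded from below by Proposition \ref{p:propertyintegral}(iii) and bounded from above by $-H(\hat{t},\hat{x},u^*(\hat{t},\hat{x}),D\phi(\hat{t},\hat{x}))$ after passing to the liminf in \eqref{e:plan-subsol}. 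Thus $K_0[u^*](\hat{t},\hat{x})$ exists and the desired inequality holds. The supersolution case is proved by the symmetric argument with $-u^*$ in place of $u^*$ and monotone convergence replaced by its dual.

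The main obstacle I foresee is the liminf passage in the singular integral. Unlike the local case, one cannot simply invoke continuity of test functions to match $K_0[w_j]$ with $K_0[u^*]$; the control must come from two qualitatively different regions, the singular region near $\tau=0$ (handled by the test-function inequality, which gives an integrable lower bound) and the bulk region (handled by Fatou combined with upper semicontinuity of $u^*$). The technique is the one already pioneered in Lemma \ref{l:ishiilemma}, and the argument here is essentially a one-function version of that limiting procedure.
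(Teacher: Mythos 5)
Your argument is correct, but it routes the key limit passage differently from the paper. The paper never invokes Lemma \ref{l:equivalence} here: it works directly with the defining inequality \eqref{e:subineq}, so after extracting (exactly as you do) a sequence $v_j\in X$ with maximum points $(t_j,x_j)\to(\hat{t},\hat{x})$ and $v_j(t_j,x_j)\to u^*(\hat{t},\hat{x})$, the quantity to be passed to the limit is $J_{t_j-a}[\phi](t_j,x_j)+K_{t_j-a}[v_j](t_j,x_j)$. There the singular region $\tau\approx0$ is carried entirely by the smooth test function through $J$, which converges by Proposition \ref{p:propertyintegral}(iv), and Fatou plus Proposition \ref{p:semicontinuityofproduct} and $v_j\le u^*$ are applied only on the region $\tau\ge\min_j(t_j-a)>0$, where a constant integrable minorant is immediate; the conclusion is $\liminf_j K_{t_j-a}[v_j](t_j,x_j)\ge K_{\hat{t}-a}[u^*](\hat{t},\hat{x})$, which is \eqref{e:subineq} itself, and no existence of $K_0[u^*]$ ever has to be discussed. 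You instead use the equivalence of Lemma \ref{l:equivalence} in both directions and pass to the liminf in the full singular integral $K_0[w_j](t_j,x_j)$, handling $\tau\approx0$ by the test-function inequality at $(t_j,x_j)$ (valid uniformly in $j$ because $t_j\to\hat{t}>0$ and $\phi\in\mathcal{C}^1$ is Lipschitz in time away from $t=0$) and the bulk by $w_j\le u^*$, upper semicontinuity and Fatou --- in effect a one-function version of Lemma \ref{l:ishiilemma}, as you say; your finiteness argument for $K_0[u^*](\hat{t},\hat{x})$ (bounded below by Proposition \ref{p:propertyintegral}(iii), bounded above after the liminf) is also sound, and for fixed $\tau>\hat{t}$ the indicator $\mathds{1}_{(0,t_j)}(\tau)$ eventually vanishes, so the pointwise liminf bound needed for Fatou holds everywhere. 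The trade-off: the paper's route is lighter, since it avoids both the equivalence lemma and any control of the nonlocal term of the unknown functions near the singularity, while your route is uniform with the machinery already set up for the comparison principle; just make the uniform integrable minorant in your single Fatou application explicit by splitting $(0,T)$ at a small $\delta_0$ chosen so that $t_j-\tau$ stays away from $0$ on $(0,\delta_0)$, since $\phi$ need not be Lipschitz up to $t=0$.
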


\begin{proof}
We only prove for a subsolution since the argument for a supersolution is similar.
Fix $[a,b]\times B\subset(0,T]\times\mathbb{R}^d$ arbitrarily, where $a<b$ and $B$ is an open in $\mathbb{R}^d$.
Assume that $u^*-\phi$ attains a maximum at $(\hat{t},\hat{x})\in (a,b]\times B$ over $[a,b]\times\overline{B}$ for $\phi\in \mathcal{C}^1([0,T]\times\mathbb{R}^d)$.
Then we must show that
\begin{equation}
\label{e:closedgoal}
J_{\hat{t}-a}[\phi](\hat{t},\hat{x})+K_{\hat{t}-a}[u^*](\hat{t},\hat{x})+H(\hat{t},\hat{x},u^*(\hat{t},\hat{x}),D\phi(\hat{t},\hat{x}))\le0.
\end{equation}
By Proposition \ref{p:replacement} we may assume that $(\hat{t},\hat{x})$ is a strict maximum point of $u^*-\phi$ such that $(u^*-\phi)(\hat{t},\hat{x})=0$.

By arguing similarly as for $\alpha=1$ we find sequences $\{(t_j,x_j)\}_{j\ge0}$ and $\{v_j\}_{j\ge0}\subset X$ such that, for each $j\ge0$, $v_j-\phi$ attains a maximum at $(t_j,x_j)\in(a,b]\times B$ over $[a,b]\times\overline{B}$ and $(t_j,x_j,v_j(t_j,x_j))\to (\hat{t},\hat{x},u^*(\hat{t},\hat{x}))$ as $j\to\infty$.
Indeed it is enough to translate slightly the proof of \cite[Lemma 2.4.1]{g} to the current situation.
This is not difficult, so the detail is safely omitted.
Since $v_j$ is a subsolution of \eqref{e:fhj},
\begin{equation}
\label{e:jequation}
J_{t_j-a}[\phi](t_j,x_j)+K_{t_j-a}[v_j](t_j,x_j)+H(t_j,x_j,v_j(t_j,x_j),D\phi(t_j,x_j))\le0
\end{equation}
for each $j\ge0$.

We shall pass to the limit $j\to\infty$ in \eqref{e:jequation}.
The continuity of Hamiltonian (A1) ensures that
$$
\lim_{j\to\infty}H(t_j,x_j,v_j(t_j,x_j),D\phi(t_j,x_j))=H(\hat{t},\hat{x},u^*(\hat{t},\hat{x}),D\phi(\hat{t},\hat{x})).
$$
Proposition \ref{p:propertyintegral} implies that
$$
\lim_{j\to\infty}J_{t_j-a}[\phi](t_j,x_j)=J_{\hat{t}-a}[\phi](\hat{t},\hat{x}).
$$
Henceforth, let us focus on $K_{t_j-a}[v_j](t_j,x_j)$.
Since $v_j\le u\le u^*$ on $\overline{Q_T}$ by the definition of $u$ and $u^*$, Proposition \ref{p:semicontinuityofproduct} implies that
\begin{equation}
\label{e:nonintegration}
\begin{aligned}
\liminf_{j\to\infty}\frac{v_j(t_j,x_j)-v_j(0,x_j)}{t_j^{\alpha}\Gamma(1-\alpha)}
&\ge\liminf_{j\to\infty}\frac{v_j(t_j,x_j)-u^*(0,x_j)}{t_j^{\alpha}\Gamma(1-\alpha)}\\
&\ge\frac{u^*(\hat{t},\hat{x})-u^*(0,\hat{x})}{\hat{t}^{\alpha}\Gamma(1-\alpha)}.
\end{aligned}
\end{equation}
To handle the term of integration we first see the existence of a constant $C_2>0$ independent of $j$ such that
$$
(v_j(t_j,x_j)-v_j(t_j-\cdot,x_j))\mathds{1}_{[t_j-a,t_j]}(\cdot)\ge -C_2\mathds{1}_{[r,T]}(\cdot)
$$
on $[0,T]$ for sufficiently large $j$, where $r:=\min_{j\ge0}(t_j-a)>0$.
Indeed, since $\sup_{j\ge0}v_j\le u\le u^*$ and $u^*<+\infty$ on $\overline{Q_T}$, there is a constant $C_3>0$ such that $\sup_{j\ge0}v_j\le C_3$ on $\overline{Q_T}$.
Since $v_j(t_j,x_j)\to u^*(\hat{t},\hat{x})$ as $j\to\infty$, for a constant $C_4>0$ (independent of $j$), $v_j(t_j,x_j)\ge u^*(\hat{t},\hat{x})-C_4$ for large $j$. 
Thus, if we set $C_2:=|u^*(\hat{t},\hat{x})-C_4-C_3|$, then
\begin{align*}
(v_j(t_j,x_j)-v_j(t_j-\cdot,x_j))\mathds{1}_{[t_j-a,t_j]}(\cdot)
&\ge (u^*(\hat{t},\hat{x})-C_4-C_3)\mathds{1}_{[t_j-a,t_j]}(\cdot)\\
&\ge -C_2\mathds{1}_{[r,T]}(\cdot)
\end{align*}
on $[0,T]$ for sufficiently large $j$, which is the desired fact.
Note that $-C_2\mathds{1}_{[r,T]}(\tau)/\tau^{\alpha+1}$ is integrable on $[0,T]$. 
Proposition \ref{p:semicontinuityofproduct} also implies that
\begin{align*}
&\liminf_{j\to\infty}(v_j(t_j,x_j)-v_j(t_j-\cdot,x_j))\mathds{1}_{[t_j-a,t_j]}(\cdot)\\
&\ge\liminf_{j\to\infty}(v_j(t_j,x_j)-u^*(t_j-\cdot,x_j))\mathds{1}_{[t_j-a,t_j]}(\cdot)\\
&\ge(u^*(\hat{t},\hat{x})-u^*(\hat{t}-\cdot,\hat{x}))\mathds{1}_{[\hat{t}-a,\hat{t}]}(\cdot).
\end{align*}
Therefore Fatou's lemma can be applied and consequently
$$
\liminf_{j\to\infty}K_{t_j-a}[v_j](t_j,x_j)\ge K_{\hat{t}-a}[u^*](\hat{t},\hat{x})
$$
by combining with \eqref{e:nonintegration}.

Taking the limit inferior $j\to\infty$ to both sides in \eqref{e:jequation} yields \eqref{e:closedgoal}.
\end{proof}

\begin{theorem}[Existence]
\label{t:perron}
Assume (A1).
Let $u^-\in USC(\overline{Q_T})$ and $u^+\in LSC(\overline{Q_T})$ be a supersolution and a subsolution of \eqref{e:fhj} such that $(u^-)_*>-\infty$ and $(u^+)^*<+\infty$ on $\overline{Q_T}$.
Suppose that $u^-\le u^+$ in $\overline{Q_T}$.
Then there exists a solution $u$ of \eqref{e:fhj} that satisfies $u^-\le u\le u^+$ in $\overline{Q_T}$.
\end{theorem}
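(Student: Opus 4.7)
The approach is Perron's method, adapting the classical local construction to the time-nonlocal operator $K_0$, in the spirit of the space-nonlocal Perron treatments \cite{i} and \cite{bi}. Treating $u^-$ as the USC subsolution barrier and $u^+$ as the LSC supersolution barrier (matching the semicontinuity assignments in the hypothesis), set
$$
\mathcal{F} := \{ v \in USC(\overline{Q_T}) : v \text{ is a subsolution of \eqref{e:fhj}},\ u^- \le v \le u^+ \text{ on } \overline{Q_T}\},
$$
and $u(t,x) := \sup\{v(t,x) : v \in \mathcal{F}\}$, which is non-empty as $u^- \in \mathcal{F}$. By Lemma \ref{l:closedness}, $u^*$ is again a subsolution. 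A short contact argument using the supersolution property of $u^+$ (if $u^* = u^+$ at some point $(t_0,x_0)$, then $u^+$ inherits any test function touching $u^*$ from above and the resulting inequalities contradict each other via the ordering $u \le u^+$) gives $u^* \le u^+$; hence $u^* \in \mathcal{F}$ and by maximality $u = u^*$, so $u$ is USC.

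Next I show $u_*$ is a supersolution by the classical bump argument, now adapted to $K_0$. Suppose for contradiction that at some $(\hat t, \hat x) \in (0,T]\times \mathbb{R}^d$ there is $\phi \in \mathcal{C}^1([0,T]\times \mathbb{R}^d)$ with $u_* - \phi$ attaining a strict minimum at $(\hat t, \hat x)$ and
$$
K_0[u_*](\hat t, \hat x) + H(\hat t, \hat x, u_*(\hat t, \hat x), D\phi(\hat t, \hat x)) < 0.
$$
A preliminary argument rules out $u_*(\hat t, \hat x) = u^+(\hat t, \hat x)$: if equality held, then $u^+ \ge u_*$ on $\overline{Q_T}$ with equality at $(\hat t, \hat x)$ would make $u^+$ also contact $\phi$ from above at $(\hat t, \hat x)$, and $K_0[u^+](\hat t, \hat x) \le K_0[u_*](\hat t, \hat x)$ (by monotonicity of the difference quotient) combined with the supersolution inequality for $u^+$ would contradict the strict inequality above. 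So $u_*(\hat t, \hat x) < u^+(\hat t, \hat x)$. Choose a parabolic cylinder $N$ around $(\hat t, \hat x)$ and set $\psi := \phi + c(|t-\hat t|^2 + |x-\hat x|^2) + \delta$ for small $c, \delta > 0$ chosen so that on $N$: (i) the strict subsolution inequality for $\psi$ persists by continuity of $H$ and by Proposition \ref{p:propertyintegral}(iv); (ii) $\psi < u^+$ by LSC of $u^+$ at $(\hat t, \hat x)$; (iii) $\psi(\hat t, \hat x) > u(\hat t, \hat x)$ yet $\psi \le u$ on $\partial N$. Define $w := \max(u, \psi)$ on $N$ and $w := u$ outside. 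Then $w \in USC(\overline{Q_T})$ with $u^- \le w \le u^+$ and $w(\hat t, \hat x) > u(\hat t, \hat x)$; if $w$ is a subsolution, this contradicts maximality, so $u_*$ is a supersolution. Finally $u_* \le u = u^*$ combined with the comparison principle (Theorem \ref{t:comp}) yields $u^* \le u_*$, hence $u \in C(\overline{Q_T})$ and $u$ is a viscosity solution.

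The main obstacle is verifying that the glued function $w$ is indeed a subsolution. The nontrivial case is a test point $(\bar t, \bar x) \in N$ where $w = \psi$ locally, since the sub-inequality (via Lemma \ref{l:equivalence}) involves $K_0[w](\bar t, \bar x)$, and the integral reaches past values $w(\bar t - \tau, \bar x)$ for $\tau$ large enough that $\bar t - \tau$ lies outside $N$, where $w = u$. The key observation is that the construction forces $w \ge \psi$ pointwise on $\overline{Q_T}$: on $N$ this is by definition, and outside $N$ we have $w = u \ge \psi$ by condition (iii) together with $\psi < u^+$ and $u = u^*$ being a subsolution dominating $\psi$ on the exterior. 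Consequently $w(\bar t, \bar x) - w(\bar t - \tau, \bar x) = \psi(\bar t, \bar x) - w(\bar t - \tau, \bar x) \le \psi(\bar t, \bar x) - \psi(\bar t - \tau, \bar x)$ on the exterior tail, yielding $K_0[w](\bar t, \bar x) \le K_0[\psi](\bar t, \bar x)$, so the strict subsolution inequality for $\psi$ transfers to $w$. This time-fractional tail estimate is the analog of the space-tail estimates in the nonlocal Perron arguments of \cite{i} and \cite{bi}, and is the technical heart of the proof.
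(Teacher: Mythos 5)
Your overall skeleton is the right one — Perron supremum over subsolutions lying below $u^+$, closedness under sup via Lemma \ref{l:closedness}, a bump construction to show $u_*$ is a supersolution, ruling out contact $u_*(\hat t,\hat x)=u^+(\hat t,\hat x)$ at the bad point, and keeping the bump below $u^+$ by lower semicontinuity (a reasonable variant of the paper's appeal to Theorem \ref{t:comp} at that stage). But the central step fails. From the negated supersolution property you only know
$$
K_0[u_*](\hat t,\hat x)+H\bigl(\hat t,\hat x,u_*(\hat t,\hat x),D\phi(\hat t,\hat x)\bigr)<0,
$$
and at a minimum of $u_*-\phi$ the history comparison goes the wrong way: $u_*(\hat t,\hat x)-u_*(\hat t-\tau,\hat x)\le\phi(\hat t,\hat x)-\phi(\hat t-\tau,\hat x)$, hence $K_0[u_*](\hat t,\hat x)\le K_0[\phi](\hat t,\hat x)=\partial_t^\alpha\phi(\hat t,\hat x)$, possibly with an arbitrarily large gap (think of $u_*$ lying far above $\phi$ on some earlier time interval). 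So your claim (i), that ``the strict subsolution inequality for $\psi$ persists by continuity,'' has no basis: unlike the case $\alpha=1$, the failure inequality is not a statement about the test function alone, and $\psi=\phi+c(\cdot)+\delta$ need not satisfy $K_0[\psi]+H(\cdot,\cdot,\psi,D\psi)<0$ anywhere near $(\hat t,\hat x)$. Consequently your ``key observation'' $K_0[w]\le K_0[\psi]$ at a touching point of the glued function, while correct, transfers an inequality you do not possess, and it discards exactly the information (the large past values of $u_*$, inherited by the glued function since it dominates $u_*$) that makes the gluing work. The paper's proof is organized around this very point: it never claims the bump is a classical subsolution; instead it shrinks the bump's neighborhood ($\rho\to0$), which forces the touching point $(\hat s,\hat y)$ in the case $U=w$ to converge to $(\hat t,\hat x)$, proves $\limsup_{\rho\to0}K_0[U](\hat s,\hat y)\le K_0[u_*](\hat t,\hat x)$ using $U\ge u_*$, the minimality of $u_*-\phi$, semicontinuity and Fatou's lemma, and then closes the estimate with the $2\theta$ slack built into the negated inequality. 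Your fixed-neighborhood construction has no counterpart of this step, and this is where the proof genuinely breaks.

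Two smaller points would also need repair. Your condition (iii), $\psi(\hat t,\hat x)>u(\hat t,\hat x)$, cannot in general be arranged for small $\delta$: only $u_*(\hat t,\hat x)$, not $u(\hat t,\hat x)$, is pinned to $\phi(\hat t,\hat x)$, so one must instead exhibit points near $(\hat t,\hat x)$, along a sequence realizing $u_*(\hat t,\hat x)$, at which the glued function exceeds $u$ — this is how the paper concludes. Likewise the assertion $u^*\le u^+$ (hence $u=u^*$) is not established by the sketched contact argument — a subsolution may touch a supersolution from below without any contradiction — but it is also unnecessary: one can glue with $u^*$ and work with the family $\{v\in S^-\mid v\le u^+\}$, exactly as the paper does.
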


\begin{proof}
Define
\begin{equation}
\label{e:definesolution}
u(t,x):=\sup\{v(t,x) \mid v\in X\}
\end{equation}
for $(t,x)\in\overline{Q_T}$, where
$$
X:=\{v\in S^- \mid \text{$v\le u^+$ on $\overline{Q_T}$}\}.
$$
Note that $X\neq\emptyset$ since $u^-\in X$.
Also, since $u^-\le u\le u^+$ on $\overline{Q_T}$ by the definition of $u$, $
-\infty<(u^-)_*\le u_*\le u^*\le (u^+)^*<+\infty$ on $\overline{Q_T}$.
Our goal in this proof is to show that $u$ defined by \eqref{e:definesolution} is actually a solution of \eqref{e:fhj}. 
Since we know that $u^*$ is a subsolution of \eqref{e:fhj} from Lemma \ref{l:closedness}, so it suffices to show that $u_*$ is a supersolution of \eqref{e:fhj}.

Suppose by contradiction that $u_*$ were not a supersolution of \eqref{e:fhj}.
Then there would be a function $\phi\in \mathcal{C}^1([0,T]\times\mathbb{R}^d)$, a point $(\hat{t},\hat{x})\in(0,T]\times\mathbb{R}^d$ and a constant $\theta>0$ such that $u_*-\phi$ attains a minimum at $(\hat{t},\hat{x})\in (0,T]\times\mathbb{R}^d$ over $(0,T]\times\mathbb{R}^d$ and
$$
K_0[u_*](\hat{t},\hat{x})+H(\hat{t},\hat{x},u_*(\hat{t},\hat{x}),D\phi(\hat{t},\hat{x}))<-2\theta.
$$
Notice that $K_0[u_*](\hat{t},\hat{x})$ may be $-\infty$, while $K_0[u_*](\hat{t},\hat{x})$ makes sense and is bounded from above by Proposition \ref{p:propertyintegral}.

Let $\rho>0$ be a small parameter so that $\rho<\hat{t}$ and $\overline{B_{2\rho}(\hat{x})}\subset(\hat{x}-\frac{1}{2},\hat{x}+\frac{1}{2}]^d$.
Define functions $w:(0,T]\times\mathbb{R}^d\to\mathbb{R}$ and $U:[0,T]\times (\hat{x}-\frac{1}{2},\hat{x}+\frac{1}{2}]^d\to\mathbb{R}$ by
$$
w(s,y):=\phi(s,y)+\frac{\rho^{2}}{2}-|s-\hat{t}|^2-|y-\hat{x}|^{2}
$$
and
\begin{equation*}
	U=\begin{cases}
		\max\{u^*,w\}\quad&\text{in $((\hat{t}-\rho,\hat{t}+\rho)\times B_{2\rho}(\hat{x}))\cap([0,T]\times (\hat{x}-\frac{1}{2},\hat{x}+\frac{1}{2}]^d)$,}\\
		u^*\quad&\text{in $([0,T]\times (\hat{x}-\frac{1}{2},\hat{x}+\frac{1}{2}]^d)\setminus((\hat{t}-\rho,\hat{t}+\rho)\times B_{2\rho}(\hat{x}))$,}
	\end{cases}
\end{equation*} 
respectively.
We regard $B_{2\rho}(\hat{x})$ and, for each $s\in[0,T]$, $U(s,\cdot)$ as a open ball in $\mathbb{T}^d$ and a function on $\mathbb{T}^d$ by extending it periodically, respectively.
We shall show that $U\in X$ and that there exists a point $(s,y)\in\overline{Q_T}$ such that $U(s,y)>u(s,y)$.
Once these were proved, we would obtain a contradiction due to the maximality of $u$.

Set
$$
\Omega:=\left\{(s,y)\in (\hat{t}-\rho,\hat{t}+\rho)\times B_{2\rho}(\hat{x}) \mid |s-\hat{t}|^2+|y-\hat{x}|^2\le\frac{\rho^2}{2}\right\}\subset \overline{Q_T}.
$$
Then $\overline{\Omega}\subset(\hat{t}-\rho,\hat{t}+\rho)\times B_{2\rho}(\hat{x})$ and 
\begin{equation}
\label{e:observation}
u^*(s,y)\ge u_*(s,y)\ge \phi(s,y)=w(s,y)-\frac{\rho^{2}}{2}+|s-\hat{t}|^{2}+|y-\hat{x}|^{2}> w(s,y)
\end{equation}
for all $(s,y)\in ((\hat{t}-\rho,\hat{t}+\rho)\times B_{2\rho}(\hat{x}))\setminus\Omega$.
Thus $U$ is upper semicontinuous on $\overline{Q_T}$ by its definition.

Assume that $U-\psi$ attains a maximum at $(\hat{s},\hat{y})\in(0,T]\times\mathbb{R}^d$ over $(0,T]\times\mathbb{R}^d$ for $\psi\in \mathcal{C}^1([0,T]\times\mathbb{R}^d)$.
We may assume that $(U-\psi)(\hat{s},\hat{y})=0$.

\textbf{Case 1:} Suppose that $U(\hat{s},\hat{y})=u^*(\hat{s},\hat{y})$.
Then, since $U\ge u^*$ on $\overline{Q_T}$, it turns out that $u^*-\psi$ attains a maximum at $(\hat{s},\hat{y})$ over $(0,T]\times\mathbb{R}^d$ and that
\begin{equation}
\label{e:forU}
U(\hat{s},\hat{y})-U(\hat{s}-\tau,\hat{y})\le u^*(\hat{s},\hat{y})-u^*(\hat{s}-\tau,\hat{y})
\end{equation}
for all $\tau\in[0,\hat{s}]$.
Recall that $u^*$ is a subsolution of \eqref{e:fhj}, so that
$$
K_0[u^*](\hat{s},\hat{y})+H(\hat{s},\hat{y},u(\hat{s},\hat{y}),D\psi(\hat{s},\hat{y}))\le0.
$$
Proposition \ref{p:propertyintegral} (iii) with \eqref{e:forU} ensures that $K_0[U](\hat{s},\hat{y})$ exists and simultaneously $K_0[U](\hat{s},\hat{y})\le K_0[u^*](\hat{s},\hat{y})$.
This implies that $U$ is a subsolution of \eqref{e:fhj}.

\textbf{Case 2:} Suppose that $U(\hat{s},\hat{y})=w(\hat{s},\hat{y})>u(\hat{s},\hat{y})$.
Then, from \eqref{e:observation}, we see $(\hat{s},\hat{y})\in\Omega$, which yields $\lim_{\rho\to0}(\hat{s},\hat{y})=(\hat{t},\hat{x})$.
By employing the idea in \cite[Theorem 3]{i} for example, we shall show that
\begin{equation}
\label{e:claim}
\limsup_{\rho\to0}K_0[U](\hat{s},\hat{y})\le K_0[u_*](\hat{t},\hat{x}).
\end{equation}
Since $U\ge u^*\ge u_*$ on $\overline{Q_T}$ the non-integration term is estimated as
$$
\frac{U(\hat{s},\hat{y})-U(0,\hat{y})}{\hat{s}^{\alpha}\Gamma(1-\alpha)}\le\frac{w(\hat{s},\hat{y})-u_*(0,\hat{y})}{\hat{s}^{\alpha}\Gamma(1-\alpha)}.
$$
Recalling that $\lim_{\rho\to0}w(\hat{s},\hat{y})=\phi(\hat{t},\hat{x})=u_*(\hat{t},\hat{x})$ we see
$$
\limsup_{\rho\to0}\frac{U(\hat{s},\hat{y})-U(0,\hat{y})}{\hat{s}^{\alpha}\Gamma(1-\alpha)}\le\frac{u_*(\hat{t},\hat{x})-u_*(0,\hat{x})}{\hat{t}^{\alpha}\Gamma(1-\alpha)}.
$$
To handle the term of integration let us divide the term of integration in $K_0[U](\hat{s},\hat{y})$ multiplied by $\Gamma(1-\alpha)/\alpha$ into two integrations as follows:
$$
I_{1,\rho}[U]:=\int_0^{\rho^2}(U(\hat{s},\hat{y})-U(\hat{s}-\tau,\hat{y}))\frac{d\tau}{\tau^{\alpha+1}}
$$
and
$$
I_{2,\rho}[U]:=\int_{\rho^2}^{\hat{s}}(U(\hat{s},\hat{y})-U(\hat{s}-\tau,\hat{y}))\frac{d\tau}{\tau^{\alpha+1}}.
$$
By definitions of $U$ and $w$ we have
\begin{align*}
U(\hat{s},\hat{y})-U(\hat{s}-\tau,\hat{y})&\le w(\hat{s},\hat{y})-w(\hat{s}-\tau,\hat{y})\\
&=\phi(\hat{s},\hat{y})-\phi(\hat{s}-\tau,\hat{y})+\tau^2-2(\hat{s}-\hat{y})\tau
\end{align*}
for all $\tau\in[0,\rho^2]$.
Hence $I_{1,\rho}[U]\le I_{1,\rho}[\phi]+C_{\rho}$ with a constant $C_{\rho}$ such that $\lim_{\rho\to0}C_{\rho}=0$.
By Proposition \ref{p:propertyintegral} (iv), we see that $\lim_{\rho\to0}I_{1,\rho}[\phi]=0$, so that
$$
\limsup_{\rho\to0}I_{1,\rho}[U]\le0.
$$
Since $U\ge u_*$ on $\overline{Q_T}$,
\begin{equation}
\label{e:needestimate}
\begin{aligned}
U(\hat{s},\hat{y})-U(\hat{s}-\tau,\hat{y})&\le w(\hat{s},\hat{y})-u_*(\hat{s}-\tau,\hat{y})\\
&\le\phi(\hat{s},\hat{y})-u_*(\hat{s}-\tau,\hat{y})+\frac{\rho^2}{2}
\end{aligned}
\end{equation}
on $[\rho^2,\hat{s}]$.
Moreover, the relationship between $u_*$ and $\phi$ yields
$$
\phi(\hat{s},\hat{y})-u_*(\hat{s}-\tau,\hat{y})+\frac{\rho^2}{2}\le \phi(\hat{s},\hat{y})-\phi(\hat{s}-\tau,\hat{y})+\frac{\rho^2}{2}.
$$
Since $\phi(\cdot,x)$ is continuous on $[0,T]$, we are able to find a large constant $C_1>0$ such that
$$
\phi(\hat{s},\hat{y})-\phi(\hat{s}-\tau,\hat{y})\le C_1\tau
$$
for all $\tau\in[\rho^2,\hat{s}]$.
In addition we may assume that $C_1$ does not depend on $\rho$.
Notice that there exists a constant $C_2>0$ such that $C_1\tau^2+\rho^2/2\le C_2\tau$ for all $\tau\in[\rho^2,\hat{s}]$.
Consequently \eqref{e:needestimate} is lead to
$$
U(\hat{s},\hat{y})-U(\hat{s}-\tau,\hat{y})\le C_2\tau
$$
on $[\rho^2,\hat{s}]$.
The right-hand side with $\tau^{-\alpha-1}$ is integrable on $[0,T]$, so that Fatou's lemma yields
$$
\limsup_{\rho\to0}I_{2,\rho}[U](\hat{s},\hat{y})\le I_{2,0}[u_*](\hat{t},\hat{x}).
$$
The above ensures \eqref{e:claim} and thus we see
$$
K_0[U](\hat{s},\hat{y})-K_0[u_*](\hat{t},\hat{x})\le\theta
$$
for sufficiently small $\rho$.
Notice that this means $K_0[U]$ and $K_0[u_*]$ actually exist. 

Since the maximizer $(\hat{s},\hat{y})$ of $U-\psi$ is of $w-\psi$ (on $\Omega$) as well, the classical maximum principle for $w-\psi$ implies that $D\phi(\hat{s},\hat{y})-2(\hat{y}-\hat{x})=D\psi(\hat{s},\hat{y})$.
Hence we see $\lim_{\rho\to0}D\psi(\hat{s},\hat{y})=D\phi(\hat{t},\hat{x})$.
Moreover, $\lim_{\rho\to0}U(\hat{s},\hat{y})=\lim_{\rho\to0}w(\hat{s},\hat{y})=\phi(\hat{t},\hat{x})=u_*(\hat{t},\hat{x})$.
Therefore
$$
H(\hat{s},\hat{y},U(\hat{s},\hat{y}),D\psi(\hat{s},\hat{y}))-H(\hat{t},\hat{x},u_*(\hat{t},\hat{s}),D\phi(\hat{t},\hat{x}))\le\theta
$$
if $\rho$ is sufficiently small.
Summing up the above we obtain for sufficiently small $\rho$ that
\begin{align*}
&K_0[U](\hat{s},\hat{y})+H(\hat{s},\hat{y},U(\hat{s},\hat{y}),D\psi(\hat{s},\hat{y}))\\
&\le -2\theta+K_0[U](\hat{s},\hat{y})-K_0[u_*](\hat{t},\hat{x})\\
&\quad+H(\hat{s},\hat{y},U(\hat{s},\hat{y}),D\psi(\hat{s},\hat{y}))-H(\hat{t},\hat{x},u_*(\hat{t},\hat{s}),D\phi(\hat{t},\hat{x}))\le0,
\end{align*}
which shows that $U$ is a subsolution of \eqref{e:fhj}.

Theorem \ref{t:comp} implies that $U\le u^+$.
Let $\{(t_j,x_j)\}_{j\ge0}$ be a sequence such that $(t_j,x_j,u(t_j,x_j))\to(\hat{t},\hat{x},u_*(\hat{t},\hat{x}))$ as $j\to\infty$.
Then
\begin{align*}
\liminf_{j\to\infty}(U(t_j,x_j)-u(t_j,x_j))\ge \lim_{j\to\infty}(w(t_j,x_j)-u(t_j,x_j))=\frac{\rho^{2}}{2}>0.
\end{align*}
In other words there exists a point $(s,y)$ such that $U(s,y)>u(s,y)$.
Therefore the proof is complete.
\end{proof}

\begin{corollary}[Unique existence for \eqref{e:fhj}-\eqref{e:init}]
\label{cor:constructbarrier}
Assume (A1)-(A4).
Then there exists at most one solution $u$ of \eqref{e:fhj}-\eqref{e:init}.
\end{corollary}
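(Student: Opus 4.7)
The corollary combines uniqueness of viscosity solutions with nonemptiness of $S^\pm$ (alluded to just before Lemma~\ref{l:closedness}); together with Theorem~\ref{t:perron} this yields unique global existence. Uniqueness is immediate from Corollary~\ref{c:unique}: any $u_1,u_2\in C(\overline{Q_T})$ solving \eqref{e:fhj}-\eqref{e:init} satisfy $u_1(0,\cdot)=u_2(0,\cdot)=u_0$, and the estimate \eqref{e:comp} forces $u_1\equiv u_2$.

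For existence, the plan is a two-step argument. First, if $u_0\in C^1(\mathbb{T}^d)$, I build explicit barriers
$$
u^\pm(t,x):=u_0(x)\pm C t^\alpha.
$$
A standard beta-function computation gives $\partial_t^\alpha(t^\alpha)=\Gamma(\alpha+1)$, hence $\partial_t^\alpha u^\pm=\pm C\Gamma(\alpha+1)$. Set $M_0:=\|H(\cdot,\cdot,u_0,Du_0)\|_{L^\infty(Q_T)}<\infty$ by (A1). The monotonicity (A3) gives $H(t,x,u_0+Ct^\alpha,Du_0)\ge H(t,x,u_0,Du_0)\ge -M_0$ and $H(t,x,u_0-Ct^\alpha,Du_0)\le M_0$, so $C:=M_0/\Gamma(\alpha+1)$ makes $u^+$ a classical supersolution and $u^-$ a classical subsolution of \eqref{e:fhj}. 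Since $\alpha t^{\alpha-1}\in L^1(0,T)$, we have $u^\pm\in\mathcal{C}^1(\overline{Q_T})$, and Proposition~\ref{p:consistency} promotes them to viscosity sub- and supersolutions; in particular $S^\pm\ne\emptyset$. Theorem~\ref{t:perron} then furnishes a viscosity solution $u$ with $u^-\le u\le u^+$. Because $u^\pm$ are continuous and $u^\pm(0,\cdot)=u_0$, the sandwich forces $u^*(0,\cdot)=u_*(0,\cdot)=u_0$, whereupon Theorem~\ref{t:comp} applied to the envelopes $u^*$ (subsolution) and $u_*$ (supersolution) yields $u^*\le u_*$, so $u\in C(\overline{Q_T})$.

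Second, for merely continuous $u_0$, mollify on the torus to obtain $u_0^n\in C^1(\mathbb{T}^d)$ with $u_0^n\to u_0$ uniformly, and let $u^n$ be the solution produced by step one for $u_0^n$. The estimate \eqref{e:comp} of Corollary~\ref{c:unique} yields
$$
\|u^n-u^m\|_{C(\overline{Q_T})}\le\|u_0^n-u_0^m\|_{C(\mathbb{T}^d)},
$$
so $\{u^n\}$ is Cauchy in $C(\overline{Q_T})$ and converges uniformly to some $u\in C(\overline{Q_T})$ with $u(0,\cdot)=u_0$. A stability argument — at a strict extremum of $u-\phi$ locate nearby extrema of $u^n-\phi$, then pass to the limit in the Hamiltonian by (A1) and in $K_0[u^n]$ by the same Fatou/monotone-convergence reasoning used in Lemma~\ref{l:closedness} — shows $u$ is itself a viscosity solution of \eqref{e:fhj}.

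The main obstacle I expect is the final stability step, because $K_0$ is a singular nonlocal operator in time and uniform convergence does not automatically survive passage through it. The technique is however precisely the one already worked out in Lemma~\ref{l:closedness}: dominate the non-integration piece of $K_0[u^n]$ via Proposition~\ref{p:semicontinuityofproduct}, and majorize the integration piece on $[r,T]$ by an $L^1$ function independent of $n$ so that Fatou's lemma applies. A minor bookkeeping issue is that the barrier constant $C=C_n$ in step one may diverge as $\|Du_0^n\|_\infty$ blows up, but this is harmless since step one is invoked only for each fixed $n$, and the uniform bound on the sequence $u^n$ comes afterwards from the comparison inequality above.
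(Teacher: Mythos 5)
Your proof is correct, but it follows a genuinely different route from the paper. The paper never approximates the initial datum: it builds barriers directly for merely continuous $u_0$ by writing $\omega(|x-y|)\le\varepsilon+C_\varepsilon f_y(x)$ with $f_y(x)=\sum_i(1-\cos(2\pi(x_i-y_i)))$, forming the two-parameter family $u^-_{\varepsilon,y}(t,x)=u_0(y)-\varepsilon-C_\varepsilon f_y(x)-Ct^\alpha/\Gamma(1+\alpha)$ of classical subsolutions (and the analogous supersolutions), taking the supremum over $(\varepsilon,y)$ and invoking Lemma \ref{l:closedness} to get sub-/superbarriers attaining $u_0$ at $t=0$; Perron's method (Theorem \ref{t:perron}) is then applied once and the initial condition follows from the sandwich, exactly as in your Step 1. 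Your alternative treats only $C^1$ data with the simpler barriers $u_0\pm Ct^\alpha$ (this part is sound: $\partial_t^\alpha t^\alpha=\Gamma(1+\alpha)$, (A3) handles the zeroth-order dependence, and Proposition \ref{p:consistency} -- or rather the one-sided half of its proof, which is also how the paper uses it -- upgrades classical to viscosity sub/supersolutions), and then reaches general $u_0$ by mollification, the contraction estimate \eqref{e:comp}, and a stability-under-uniform-convergence argument. That last step is the only place where you owe more detail, but it is legitimately available: with $H_\varepsilon\equiv H$ it is precisely Theorem \ref{t:stab} (whose proof is independent of this corollary, so no circularity), or it can be carried out by hand with the Fatou/domination scheme of Lemma \ref{l:closedness}, as you indicate; uniform convergence even allows dominated convergence in the $K$-terms. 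In exchange for this extra limit passage, your argument avoids the modulus-of-continuity barriers and the envelope-supremum construction; the paper's construction buys a self-contained existence proof within Section 4 that needs neither approximation of $u_0$ nor any stability theorem. Your handling of continuity up to $t=0$ (comparing $u^*$ and $u_*$ after noting $u^*(0,\cdot)\le u_0\le u_*(0,\cdot)$ from the continuous barriers) matches what the paper leaves to the reader, and your observation that the barrier constant may blow up with $\|Du_0^n\|_\infty$ is indeed harmless for the reason you give.
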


\begin{proof}
The uniqueness of a solution is guaranteed by Theorem \ref{t:comp}.
Henceforth, it is enough to construct $u^-$ and $u^+$ in Theorem \ref{t:perron} so that $u$ defined by \eqref{e:definesolution} satisfies $u(0,\cdot)=u_0$ on $\mathbb{T}^d$.

Set $\omega(\ell):=\sup\{|u_{0}(\zeta)-u_{0}(\eta)| \mid \zeta,\eta\in\mathbb{T}^d,|\zeta-\eta|\le \ell\}$ for $\ell\ge0$ and $f_y(x):=\sum_{i=1}^d(1-\cos(2\pi(x_i-y_i)))$ for $x,y\in\mathbb{T}^d$, where $x_i$ and $y_i$ are $i$-th components of each variable.
Then for each $\varepsilon>0$ there exists a constant $C_{\varepsilon}>0$ such that $\omega(|x-y|)\le\varepsilon+C_{\varepsilon}f_y(x)$ for all $x,y\in\mathbb{T}^d$.
For $\varepsilon\in(0,1)$ and $y\in\mathbb{T}^{d}$ we define a function $u_{\varepsilon,y}^-:\overline{Q_T}\to\mathbb{R}$ by
$$
u_{\varepsilon,y}^{-}(t,x):=u_{0}(y)-\varepsilon-C_{\varepsilon}f_y(x)-\frac{C t^{\alpha}}{\Gamma(1+\alpha)},
$$
where $C>0$ is a large constant.
Then $u_{\varepsilon,y}^-\in \mathcal{C}^1(\overline{Q_T})$.
Moreover $u_{\varepsilon,y}^-\le u_0(y)$ by the non-negativity of $f_y$ and $|Du_{\varepsilon,y}^-|$ is bounded on $Q_T$.
It is well-known that 
\begin{equation}
\label{e:derivativeofpowerfunction}
\frac{1}{\Gamma(1-\alpha)}\int_a^t\frac{[(s-a)^{\beta}]'}{(t-s)^{\alpha}}ds=\frac{\Gamma(\beta+1)}{\Gamma(\beta-\alpha+1)}(t-a)^{\beta-\alpha}
\end{equation}
for given constants $a\in\mathbb{R}$ and $\beta\in(0,1)$; see \cite[(2.56)]{p} for the proof.
From this formula with $(a,\beta)=(0,\alpha)$ and the above, we see
$$
-C+H(t,x,u_{\varepsilon,y}^-(t,x),Du_{\varepsilon,y}^-(t,x))\le0
$$ 
for all $(t,x)\in Q_T$. 
Thus Proposition \ref{p:consistency} implies that $u_{\varepsilon,y}^-$ is a (viscosity) subsolution of \eqref{e:fhj}.

We also see
$$
u_{\varepsilon,y}^-(t,x)\le u_0(x)+\omega(|x-y|)-\varepsilon-C_{\varepsilon}f_y(x)-\frac{Ct^{\alpha}}{\Gamma(1-\alpha)}\le u_0(x)
$$
for all $(t,x)\in\overline{Q_T}$.
Therefore, Lemma \ref{l:closedness} ensures that 
$$
u^-(t,x):=(\sup\{u_{\varepsilon,y}^-(t,x) \mid \varepsilon\in(0,1),y\in\mathbb{T}^d\})^*
$$
 is a subsolution of \eqref{e:fhj} and satisfies $u^-(t,x)\le u_0(x)$ for all $(t,x)\in\overline{Q_T}$.
The definition of $u^-$ yields $u^-(0,\cdot)\ge u_0$ on $\mathbb{T}^d$, which guarantees that $(u^-)_*>-\infty$ on $\overline{Q_T}$ and $u^-(0,\cdot)=u_0$ on $\mathbb{T}^d$.
Similarly, a supersolution with desired properties is constructed.
Moreover, it turns out that $u^\pm$ satisfy $u_0(x)=\lim_{(t,y)\to(x,0)}u^\pm(t,y)$ but we leave the verification to the reader; cf. \cite{g}.
With $u^\pm$ above, we obtain a solution $u$ by Theorem \ref{t:perron}, and it satisfies $u(0,\cdot)=u_0$ on $\mathbb{T}^d$.
The proof is now complete.
\end{proof}

\section{Some stability results}

Two main theorems in this section are in what follows:
\begin{theorem}[Stability I]
\label{t:stab}
Let $H_\varepsilon$ and $H$ satisfy (A1)-(A3), where $\varepsilon>0$.
Let $u_\varepsilon\in USC(\overline{Q_T})$ (resp. $LSC(\overline{Q_T})$) be a subsolution (resp. supersolution) of 
$$
\partial_t^\alpha u_\varepsilon+H_\varepsilon(t,x,u_\varepsilon,Du_\varepsilon)=0\quad\text{in $Q_T$.}
$$
Assume that $H_\varepsilon$ converges to $H$ as $\varepsilon\to0$ locally uniformly in $(0,T]\times\mathbb{T}^d\times\mathbb{R}\times\mathbb{R}^d$.
Assume that $\{u_{\varepsilon}\}_{\varepsilon>0}$ is locally uniformly bounded.
Then $u:=\limsup{}^{*}u_{\varepsilon}$ (resp. $\liminf{}_{*}u_{\varepsilon}$) is a subsolution (resp. supersolution) of 
$$
\partial_t^\alpha u+H(t,x,u,Du)=0\quad\text{in $Q_T$.}
$$
\end{theorem}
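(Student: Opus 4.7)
The plan is to adapt the argument of Lemma \ref{l:closedness}, replacing the family $\{v_j\}\subset X$ there by the sequence $\{u_{\varepsilon_j}\}$ produced by the standard half-relaxed limits construction. I treat only subsolutions; the supersolution case is symmetric.

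First, I reduce to strict maxima via Proposition \ref{p:replacement}(ii). Fix $[a,b]\times B\subset(0,T]\times\mathbb{R}^d$ with $a<b$ and $B$ an open ball, and suppose $u-\phi$ attains a strict maximum at $(\hat{t},\hat{x})\in(a,b]\times B$ over $[a,b]\times\overline{B}$ for some $\phi\in\mathcal{C}^1([0,T]\times\mathbb{R}^d)$. Because $\{u_\varepsilon\}$ is locally uniformly bounded, the usual half-relaxed limits device (cf.\ the sequence extraction in Lemma \ref{l:closedness}) yields $\varepsilon_j\searrow0$ and maximizers $(t_j,x_j)\in(a,b]\times B$ of $u_{\varepsilon_j}-\phi$ over $[a,b]\times\overline{B}$ with $(t_j,x_j)\to(\hat{t},\hat{x})$ and $u_{\varepsilon_j}(t_j,x_j)\to u(\hat{t},\hat{x})$. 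The viscosity subsolution property of $u_{\varepsilon_j}$ for $H_{\varepsilon_j}$ then gives
\begin{equation}
\label{e:stabbase}
J_{t_j-a}[\phi](t_j,x_j)+K_{t_j-a}[u_{\varepsilon_j}](t_j,x_j)+H_{\varepsilon_j}(t_j,x_j,u_{\varepsilon_j}(t_j,x_j),D\phi(t_j,x_j))\le 0.
\end{equation}

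The next step is to take $\liminf_{j\to\infty}$ in \eqref{e:stabbase}. The term $J_{t_j-a}[\phi](t_j,x_j)\to J_{\hat{t}-a}[\phi](\hat{t},\hat{x})$ by Proposition \ref{p:propertyintegral}(iv), and the Hamiltonian term converges to $H(\hat{t},\hat{x},u(\hat{t},\hat{x}),D\phi(\hat{t},\hat{x}))$ thanks to the locally uniform convergence $H_\varepsilon\to H$ and continuity of $H$. The heart of the matter is
\begin{equation}
\label{e:stabK}
\liminf_{j\to\infty}K_{t_j-a}[u_{\varepsilon_j}](t_j,x_j)\ge K_{\hat{t}-a}[u](\hat{t},\hat{x}).
\end{equation}
For the non-integration part of $K_{t_j-a}$, note that $u_{\varepsilon_j}(t_j,x_j)\to u(\hat{t},\hat{x})$ while $\limsup_j u_{\varepsilon_j}(0,x_j)\le u(0,\hat{x})$ by the definition of $\limsup{}^{*}$; Proposition \ref{p:semicontinuityofproduct} then provides the lower bound $[u(\hat{t},\hat{x})-u(0,\hat{x})]/[\hat{t}^\alpha\Gamma(1-\alpha)]$. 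For the integration part, set $r_j:=t_j-a$ and $r_*:=\inf_j r_j>0$; local uniform boundedness of $\{u_{\varepsilon_j}\}$ supplies a constant $C>0$, independent of $j$, such that
$$
(u_{\varepsilon_j}(t_j,x_j)-u_{\varepsilon_j}(t_j-\tau,x_j))\mathds{1}_{(r_j,t_j)}(\tau)\ge -C\mathds{1}_{(r_*,T)}(\tau)\quad\text{on }(0,T),
$$
and $-C\tau^{-\alpha-1}\mathds{1}_{(r_*,T)}(\tau)$ is integrable. Upper semicontinuity of $u$ combined with $(t_j,x_j)\to(\hat{t},\hat{x})$ and $u_{\varepsilon_j}(t_j,x_j)\to u(\hat{t},\hat{x})$ gives the pointwise a.e.\ inequality
$$
\liminf_{j\to\infty}(u_{\varepsilon_j}(t_j,x_j)-u_{\varepsilon_j}(t_j-\tau,x_j))\mathds{1}_{(r_j,t_j)}(\tau)\ge(u(\hat{t},\hat{x})-u(\hat{t}-\tau,\hat{x}))\mathds{1}_{(\hat{t}-a,\hat{t})}(\tau),
$$
and Fatou's lemma applied to the non-negative sequence obtained by adding the integrable minorant delivers the integration counterpart of \eqref{e:stabK}.

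The principal obstacle I anticipate is precisely \eqref{e:stabK}: the $u_{\varepsilon_j}$ are neither monotone in $j$ nor pointwise convergent, so neither monotone nor dominated convergence applies directly. The cure, transplanted from Lemma \ref{l:closedness}, is to manufacture a uniform integrable lower bound on $(r_*,T)$, then apply Fatou to the non-negative shifted sequence and use Proposition \ref{p:semicontinuityofproduct} to control the sign-indefinite non-integration factor $1/[t_j^\alpha\Gamma(1-\alpha)]$. With these ingredients assembled, taking $\liminf_{j\to\infty}$ in \eqref{e:stabbase} yields the desired viscosity subsolution inequality for $u$ at $(\hat{t},\hat{x})$, completing the proof.
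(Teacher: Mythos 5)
Your proposal is correct and follows essentially the route the paper itself intends: the paper does not prove Theorem \ref{t:stab} directly but reduces it to the argument given for Theorem \ref{t:conv}, namely extracting maximizers of $u_{\varepsilon_j}-\phi$ by the half-relaxed-limit device, writing the subsolution inequality for $u_{\varepsilon_j}$ with $H_{\varepsilon_j}$, and passing to the liminf via Proposition \ref{p:propertyintegral} (iv) for the $J$-term, Proposition \ref{p:semicontinuityofproduct} for the non-integration term, and Fatou's lemma with a $j$-independent integrable minorant for the integral, exactly as in the proofs of Lemma \ref{l:ishiilemma} and Lemma \ref{l:closedness}. The one point worth noting (which the paper also glosses over) is that your bound $\limsup_{j\to\infty}u_{\varepsilon_j}(0,x_j)\le u(0,\hat{x})$, and likewise the bound at the times $t_j-\tau$, uses the defining property of the relaxed limit along points converging to $(0,\hat{x})$, so the relaxed limit must be understood over $\overline{Q_T}$ including $t=0$ rather than literally over $Q_T$ as in \eqref{e:url}.
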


Here $\limsup{}^{*}u_{\varepsilon}$ appears above is the \emph{upper relaxed limit} defined by
\begin{equation}
\label{e:url}
(\limsup_{\varepsilon\to0}{}^{*}u_{\varepsilon})(t,x):=\lim_{\delta\searrow0}\sup\{u_{\varepsilon}(s,y) \mid (s,y)\in Q_T\cap\overline{B_{\delta}(t,x)},0<\varepsilon<\delta\}
\end{equation}
for $(t,x)\in\overline{Q_T}$ and $\liminf{}_{*}u_{\varepsilon}:=-\limsup{}^{*}(-u_{\varepsilon})$ is the \emph{lower relaxed limit}.

\begin{theorem}[Stability I\hspace{-.1em}I]
\label{t:conv}
Assume that (A1)-(A4).
Let $u_{\alpha}\in C(\overline{Q_T})$ be a solution of \eqref{e:fhj}-\eqref{e:init} whose time-derivative's order is $\alpha\in(0,1)$.
Then $u_{\alpha}$ converges to $u_{\beta}$ locally uniformly in $Q_T$ as $\alpha\to\beta$, where $u_{\beta}$ is a solution of \eqref{e:fhj}-\eqref{e:init} whose time-derivative's order is $\beta\in(0,1]$. 
\end{theorem}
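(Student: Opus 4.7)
The plan is to adapt the half-relaxed limit machinery of Theorem~\ref{t:stab}, treating the order $\alpha$ itself as the stability parameter. Define
$$
\bar{u}(t,x) := \lim_{\delta \searrow 0} \sup\bigl\{ u_\gamma(s,y) \mid (s,y) \in \overline{Q_T} \cap \overline{B_\delta(t,x)},\ |\gamma - \beta| < \delta,\ \gamma \in (0,1) \bigr\}
$$
and $\underline{u} := -\overline{(-u_\alpha)}$. If $\bar{u}$ is a subsolution and $\underline{u}$ a supersolution of \eqref{e:fhj}--\eqref{e:init} with order $\beta$, then Theorem~\ref{t:comp} (for $\beta \in (0,1)$, and its classical analogue at $\beta = 1$) gives $\bar{u} \le \underline{u}$, which combined with the trivial $\underline{u} \le \bar{u}$ forces $\bar{u} = \underline{u} = u_\beta$. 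Equality of the upper and lower relaxed limits is standard to upgrade to local uniform convergence.

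For the half-relaxed limits to be finite and to verify the initial trace $\bar{u}(0,\cdot) \le u_0 \le \underline{u}(0,\cdot)$ on $\mathbb{T}^d$, we recycle the barriers built in the proof of Corollary~\ref{cor:constructbarrier}: since $t^\alpha/\Gamma(1+\alpha)$ depends continuously on $\alpha$ (including at $\alpha = 1$), these barriers are uniformly bounded and share a common modulus at $t = 0$ for $\alpha$ in a neighborhood of $\beta$, and the comparison principle applied for each $\alpha$ sandwiches $u_\alpha$ between them. Now, arguing as in Lemma~\ref{l:closedness}, if $\bar{u} - \phi$ attains a strict maximum at $(\hat{t},\hat{x}) \in (a,b] \times B$ for some $\phi \in \mathcal{C}^1([0,T]\times\mathbb{R}^d)$, one extracts $(\alpha_\varepsilon, t_\varepsilon, x_\varepsilon) \to (\beta, \hat{t}, \hat{x})$ with $u_{\alpha_\varepsilon} - \phi$ attaining a maximum on $[a,b] \times \overline{B}$ at $(t_\varepsilon, x_\varepsilon)$ and $u_{\alpha_\varepsilon}(t_\varepsilon, x_\varepsilon) \to \bar{u}(\hat{t},\hat{x})$. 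The viscosity inequality
$$
J_{t_\varepsilon - a}^{\alpha_\varepsilon}[\phi](t_\varepsilon, x_\varepsilon) + K_{t_\varepsilon - a}^{\alpha_\varepsilon}[u_{\alpha_\varepsilon}](t_\varepsilon, x_\varepsilon) + H(t_\varepsilon, x_\varepsilon, u_{\alpha_\varepsilon}(t_\varepsilon, x_\varepsilon), D\phi(t_\varepsilon, x_\varepsilon)) \le 0
$$
is the object we must pass to the liminf in.

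The main obstacle is the convergence of the $\alpha_\varepsilon$-dependent nonlocal operators. For $\beta \in (0,1)$, the factors $1/\Gamma(1-\alpha_\varepsilon)$ and the kernel $\tau^{-\alpha_\varepsilon-1}$ are continuous in $\alpha_\varepsilon$; an $\alpha$-uniform version of Proposition~\ref{p:propertyintegral}(iv) handles $J_r^{\alpha_\varepsilon}[\phi](t_\varepsilon, x_\varepsilon) \to J_r^\beta[\phi](\hat{t},\hat{x})$, while Fatou's lemma applied with the uniform bound on $\{u_{\alpha_\varepsilon}\}$ and the lower bound $u_{\alpha_\varepsilon}(t_\varepsilon,x_\varepsilon) - u_{\alpha_\varepsilon}(t_\varepsilon - \tau, x_\varepsilon) \ge \phi(t_\varepsilon,x_\varepsilon) - \phi(t_\varepsilon - \tau, x_\varepsilon)$ on the interval $[0, t_\varepsilon - a]$ delivers $\liminf_\varepsilon K_{t_\varepsilon - a}^{\alpha_\varepsilon}[u_{\alpha_\varepsilon}](t_\varepsilon, x_\varepsilon) \ge K_{\hat{t}-a}^\beta[\bar{u}](\hat{t}, \hat{x})$, essentially by repeating the arguments of Lemma~\ref{l:ishiilemma} and Lemma~\ref{l:closedness} with the extra parameter $\alpha_\varepsilon$. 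The case $\beta = 1$ is the genuinely delicate one: $1/\Gamma(1-\alpha_\varepsilon) \to 0$ while the measure $\alpha_\varepsilon \tau^{-\alpha_\varepsilon - 1} d\tau/\Gamma(1-\alpha_\varepsilon)$ concentrates at $\tau = 0$, and the limit equation is local. Using the integration-by-parts identity (Proposition~\ref{p:integrationbyparts}) to rewrite $K_0^{\alpha_\varepsilon}[\phi]$ as $\Gamma(1-\alpha_\varepsilon)^{-1} \int_0^t (t-s)^{-\alpha_\varepsilon} \phi_s(s,x)\, ds$, the dominated convergence theorem gives $K_0^{\alpha_\varepsilon}[\phi](t_\varepsilon, x_\varepsilon) \to \partial_t \phi(\hat{t},\hat{x})$, and combining with the same $u_{\alpha_\varepsilon} \ge \phi$-type lower bound one recovers the classical viscosity inequality $\partial_t \phi(\hat{t},\hat{x}) + H(\hat{t},\hat{x},\bar{u}(\hat{t},\hat{x}), D\phi(\hat{t},\hat{x})) \le 0$. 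The argument for $\underline{u}$ is symmetric, and the proof concludes by the comparison principle as indicated above.
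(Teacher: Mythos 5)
Your overall architecture coincides with the paper's: relaxed limits jointly in $(t,x)$ and in the order, uniform barriers for boundedness and the initial trace, passage to the limit in the viscosity inequality along approximate maximizers of $u_{\alpha_\varepsilon}-\phi$, and conclusion by the comparison principle and uniqueness; for $\beta\in(0,1)$ your limit passage (Proposition \ref{p:propertyintegral} (iv) for the $J$-term, Fatou's lemma with the uniform bound for the $K$-term) is exactly what the paper does. The genuine problem is the case $\beta=1$, which is precisely where the paper has to do real work. Your pointwise bound $u_{\alpha_\varepsilon}(t_\varepsilon,x_\varepsilon)-u_{\alpha_\varepsilon}(t_\varepsilon-\tau,x_\varepsilon)\ge\phi(t_\varepsilon,x_\varepsilon)-\phi(t_\varepsilon-\tau,x_\varepsilon)$ is available only for $\tau\le t_\varepsilon-a$, because the maximum of $u_{\alpha_\varepsilon}-\phi$ is taken over $[a,b]\times\overline{B}$; it therefore does not give $K_0^{\alpha_\varepsilon}[u_{\alpha_\varepsilon}](t_\varepsilon,x_\varepsilon)\ge K_0^{\alpha_\varepsilon}[\phi](t_\varepsilon,x_\varepsilon)$, since $K_0$ sees the values of $u_{\alpha_\varepsilon}$ on all of $[0,t_\varepsilon]$, in particular on $[0,a)$ where no comparison with $\phi$ holds. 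The paper repairs exactly this point by replacing $\phi$ with $\psi=\xi_1\phi+\xi_2M$, where $M$ exceeds the uniform bound of Remark \ref{r:unifbdd}, so that the maximum becomes global on $(0,T]\times\mathbb{R}^d$ and $K_0[u_{\alpha_j}](t_j,x_j)\ge\partial_t^{\alpha_j}\psi(t_j,x_j)$; alternatively one can keep the $J+K$ form of the inequality and note that the entire term $K^{\alpha_\varepsilon}_{t_\varepsilon-a}[u_{\alpha_\varepsilon}](t_\varepsilon,x_\varepsilon)$ is $O(1/\Gamma(1-\alpha_\varepsilon))\to0$ by the uniform bound, while $J^{\alpha_\varepsilon}_{t_\varepsilon-a}[\phi](t_\varepsilon,x_\varepsilon)\to\partial_t\phi(\hat t,\hat x)$. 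Some such step must be supplied; as written, the replacement of the solution's nonlocal term by the test function's does not follow.

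A second, related flaw: the claim that ``dominated convergence gives $K_0^{\alpha_\varepsilon}[\phi](t_\varepsilon,x_\varepsilon)\to\partial_t\phi(\hat t,\hat x)$'' is wrong as stated. After Proposition \ref{p:integrationbyparts} the integrand is $(t-s)^{-\alpha_\varepsilon}\partial_s\phi(s,x)/\Gamma(1-\alpha_\varepsilon)$, and since $\Gamma(1-\alpha_\varepsilon)\to\infty$ as $\alpha_\varepsilon\to1^-$, its pointwise limit is $0$ for every $s<t$; dominated convergence would thus produce $0$, not $\partial_t\phi(\hat t,\hat x)$. The correct mechanism is that $(1-\alpha_\varepsilon)(t-s)^{-\alpha_\varepsilon}\,ds/\Gamma(2-\alpha_\varepsilon)$ is an approximate identity concentrating at $s=t$, which is the content of the result the paper cites, \cite[Theorem 2.10]{d}. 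The conclusion you want is true, but the tool you invoke cannot deliver it, so this step needs to be redone or cited. The remaining ingredients of your proposal (uniform barriers for the initial trace --- in fact spelled out more carefully than in the paper, comparison at order $\beta$ with the classical version when $\beta=1$, and the standard upgrade from equality of the relaxed limits to locally uniform convergence) are fine and agree with the paper's proof.
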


A same idea as for the proof of \cite[Theorem V.1.7]{bcd} is used for Theorem \ref{t:stab} and Theorem \ref{t:conv}.
A deal for the term of time-derivative is the only difference between Theorem \ref{t:stab} and \cite[Theorem V.1.7]{bcd} but it is similar between Theorem \ref{t:stab} and Theorem \ref{t:conv}.
For this reason we only prove Theorem \ref{t:conv}.

\begin{proof}
As the analogy of the upper/lower relaxed limits, for $\beta\in(0,1]$, we define functions $u^\sharp$ and $u_\sharp$ by
$$
u^\sharp(t,x):=\lim_{\delta\searrow0}\sup\{u_{\alpha}(s,y) \mid (s,y)\in\overline{B_{\delta}(t,x)}\cap Q_T,\alpha\in(\beta-\delta,\beta+\delta)\cap(0,1)\}
$$
and $u_\sharp:=-(-u)^\sharp$ on $\overline{Q_T}$.
In Remark \ref{r:unifbdd} later we will mention that $\{u_{\alpha}\}_{\alpha\in(0,1]}$ of \eqref{e:fhj}-\eqref{e:init} is uniformly bounded on $\overline{Q_T}$.
Hence $u^\sharp$ and $u_\sharp$ are bounded on $\overline{Q_T}$.
Note also that $u^\sharp$ is an upper semicontinuous function, so $u_\sharp$ is a lower semicontinuous function. 

We shall show that $u^\sharp$ and $u_\sharp$ are a subsolution and a supersolution of \eqref{e:fhj}-\eqref{e:init} whose time-derivative's order is $\beta$.
It suffices to show that $u^\sharp$ is a subsolution of \eqref{e:fhj} since the similar argument is applied for $u_\sharp$ and it is clear that $u^\sharp(0,\cdot)\le u_0$ and $u_\sharp(0,\cdot)\ge u_0$ on $\mathbb{T}^{d}$.

Fix $[a,b]\times B\subset(0,T]\times\mathbb{R}^d$ arbitrarily, where $a<b$ and $B$ is an open set in $\mathbb{R}^d$.
Assume that $u^\sharp-\phi$ attains a maximum at $(\hat{t},\hat{x})\in(a,b]\times B$ over $[a,b]\times\overline{B}$ for $\phi\in\mathcal{C}^{1}([0,T]\times\mathbb{R}^d)$.
Let $\{\alpha_j\}_{j\ge0}$ and $\{(t_j,x_j)\}_{j\ge0}$ be sequences such that $u_{\alpha_j}-\phi$ attains a maximum at $(t_j,x_j)\in(a,b]\times B$ over $[a,b]\times \overline{B}$ and 
$$
(\alpha_j,t_j,x_j,u_{\alpha_j}(t_j,x_j))\to(\beta,\hat{t},\hat{x},u^\sharp(\hat{t},\hat{x}))
$$
as $j\to\infty$.
A proof of existence of such sequences is essentially same as for \cite[Lemma V.1.6]{bcd} and not difficult, so we omit it.  

\textbf{Case 1: $\beta\neq1$.} 
Since $u_{\alpha_j}$ is a subsolution of \eqref{e:fhj},
\begin{equation}
\label{e:unlimited}
J_{t_j-a}^{\alpha_j}[\psi_j](t_j,x_j)+K^{\alpha_j}_{t_j-a}[u_{\alpha_j}](t_j,x_j)+H(t_j,x_j,u_{\alpha_j}(t_j,x_j),D\phi(t_j,x_j))\le0.
\end{equation}
Here $J_r^{\alpha_j}$ and $K_r^{\alpha_j}$ are associated with $\alpha=\alpha_j$.
By similar arguments in previous sections it can be turns out that 
\begin{equation}
\label{e:limit}
\liminf_{j\to\infty}(J_{t_j-a}^{\alpha_j}[\psi_j](t_j,x_j)+K^{\alpha_j}_{t_j-a}[u_{\alpha_j}](t_j,x_j))\ge J_{\hat{t}-a}^{\beta}[\phi](\hat{t},\hat{x})+K_{\hat{t}-a}^{\beta}[u^\sharp](\hat{t},\hat{x}).
\end{equation}
Since
$$
\lim_{j\to\infty}H(t_j,x_j,u_{\alpha_j}(t_j,x_j),D\phi(t_j,x_j))=H(\hat{t},\hat{x},u^{\sharp}(\hat{t},\hat{x}),D\phi(\hat{t},\hat{x})),
$$
we find that $u^\sharp$ is a subsolution of \eqref{e:fhj}.

\textbf{Case 2:} $\beta=1$. 
There are similar sequences $\{\alpha_j\}_j$ and $\{(t_j,x_j)\}_j$ even for $\varphi\in C^1((0,T]\times\mathbb{R}^d)$ instead of $\phi\in\mathcal{C}^1([0,T]\times\mathbb{R}^d)$.
Since $(t_j,x_j)\to(\hat{t},\hat{x})\in(a,b]\times B$ as $j\to\infty$, we may assume that $\{(t_j,x_j)\}_j\subset (\hat{t}-3\delta,\hat{t}]\times B_{2\delta}(\hat{x})$ by considering large $j$, where $\delta>0$ is a constant such that $[\hat{t}-3\delta,\hat{t}]\times \overline{B_{2\delta}(\hat{x})}\subset (a,b]\times B$. 
Let $\xi_1,\xi_2:[0,T]\times\mathbb{R}^d\to\mathbb{R}$ be $C^\infty$ functions such that $\xi_1+\xi_2=1$ on $[0,T]\times\mathbb{R}^d$, $\xi_1=1$ on $[\hat{t}-\delta,\hat{t}]\times B_\delta(\hat{x})$ and $\xi_2=1$ on $([0,T]\times\mathbb{R}^d)\setminus([\hat{t}-2\delta,\hat{t}]\times B_{2\delta}(\hat{x}))$.
Since $\{u_\alpha\}_{\alpha\in(0,1]}$ is uniformly bounded on $\overline{Q_T}$, there is a constant $C>0$ such that $\max_{\overline{Q_T}}|u_{\alpha_j}|\le C$ for all $j$.
Set $\psi=\xi_1\phi+\xi_2 M$, where $M:=C+1$.
Then $\psi\in C^1([0,T]\times\mathbb{R}^d)\subset\mathcal{C}^1([0,T]\times\mathbb{R}^d)$ and $u_{\alpha_j}-\psi$ attains a maximum at $(t_j,x_j)\in (0,T]\times\mathbb{R}^d$.
Thus we have 
\begin{equation}
\label{e:usual}
K_0[u_{\alpha_j}](t_j,x_j)+H(t_j,x_j,u_{\alpha_j}(t_j,x_j),D\psi(t_j,x_j))\le0.
\end{equation}
Note that $K_0[u_{\alpha_j}](t_j,x_j)\ge \partial_t^{\alpha_j}\psi(t_j,x_j)$.
According to \cite[Theorem 2.10]{d} we notice that
$$
\lim_{j\to\infty}\partial_{t}^{\alpha_j}\psi(t_j,x_j)=\partial_{t}\psi(\hat{t},\hat{x}).
$$
Thus estimating $K_0[u_{\alpha_j}](t_j,x_j)$ by $\partial_t^{\alpha_j}\psi(t_j,x_j)$ in \eqref{e:usual} and then passing to the limit $j\to\infty$ implies that
$$
\partial_{t}\psi(\hat{t},\hat{x})+H(\hat{t},\hat{x},u^\sharp(\hat{t},\hat{x}),D\psi(\hat{t},\hat{x}))\le0.
$$
Since $\partial_t\psi(\hat{t},\hat{x})=\partial_t\phi(\hat{t},\hat{x})$ and $D\psi(\hat{t},\hat{x})=D\phi(\hat{t},\hat{x})$, $u^\sharp$ is a subsolution of \eqref{e:fhj} with $\alpha=1$.

The comparison principle implies that $u^\sharp\le u_\sharp$ on $\overline{Q_T}$ but $u_\sharp\le u^\sharp$ by their definition.
We hence see that $u:=u^\sharp=u_\sharp$ is a solution of \eqref{e:fhj} and $u(0,\cdot)=u_0$ on $\mathbb{T}^{d}$.
Corollary \ref{c:unique} ensures that $u=u_{\beta}$, a conclusion.
\end{proof}

\section{Regularity results}
\label{s:regularity}
Let consider one-dimensional transport equations of the form
$$
\partial_t^\alpha u+\partial_xu=0\quad\text{in $(0,\infty)\times\mathbb{R}$}
$$
with prescribed initial value $u|_{t=0}=u_0\in C(\mathbb{R})$.
In \cite{mmp} for instance, a solution of this equation was given as
\begin{equation}
\label{e:representation}
u(t,x)=\frac{1}{t^{\alpha}}\int_0^{\infty}W_{-\alpha,1-\alpha}\left(-\frac{z}{t^{\alpha}}\right)u_0(x-z)dz
\end{equation}
through the Laplace and the inverse Laplace transformation.
Here $W_{-\alpha,1-\alpha}$ is Wright function defined by
$$
W_{-\alpha,1-\alpha}(z):=\sum_{j=0}^{\infty}\frac{z^j}{j!\Gamma(-\alpha j+1-\alpha)}.
$$
For properties and formulae for Wright function, see \cite{p} and references therein.
It can be verified that this solution is indeed a unique viscosity solution.
We leave the detail of calculations to the reader.   

Let us assume that
\begin{itemize}
\item[(A4')] $u_0$ is a Lipschitz continuous function with the Lipschitz constant $\lip[u_0]$.
\end{itemize}
We shall prove a continuity of solutions with respect to each variable.
In space, since $W_{-\alpha,1-\alpha}\ge0$ on $(0,\infty)$ and
$$
\int_0^{\infty}W_{-\alpha,1-\alpha}(-z)dz=1,
$$ 
we have 
\begin{align*}
|u(t,x)-u(t,y)|&\le\frac{1}{t^{\alpha}}\int_0^{\infty}W_{-\alpha,1-\alpha}\left(-\frac{z}{t^{\alpha}}\right)|u_0(x-z)-u_0(y-z)|dz\\
&\le \frac{1}{t^{\alpha}}\int_0^{\infty}W_{-\alpha,1-\alpha}\left(-\frac{z}{t^{\alpha}}\right)dz\lip[u_0]|x-y|=\lip[u_0]|x-y|
\end{align*}
for $(t,x,y)\in[0,T]\times\mathbb{R}\times\mathbb{R}$.
In time, since
$$
\int_0^{\infty}W_{-\alpha,1-\alpha}(-z)zdz=\frac{1}{\Gamma(\alpha+1)}
$$
and $u$ given by \eqref{e:representation} is rewritten as 
$$
u(t,x)=\int_0^{\infty}W_{-\alpha,1-\alpha}(-z)u_0(x-t^{\alpha}z)dz,
$$
we have
\begin{align*}
|u(t,x)-u(s,x)|&\le\int_0^{\infty}W_{-\alpha,1-\alpha}(-z)|u_0(x-t^{\alpha}z)-u_0(x-s^{\alpha}z)|dz\\
&\le \int_{0}^{\infty}W_{-\alpha,1-\alpha}(-z)zdz\lip[u_0]|t^{\alpha}-s^{\alpha}|\le \frac{\lip[u_0]}{\Gamma(\alpha+1)}|t-s|^{\alpha}
\end{align*}
for $(t,s,x)\in[0,T]\times[0,T]\times\mathbb{R}$.
These regularity results hold for solutions of \eqref{e:fhj}-\eqref{e:init} with some general Hamiltonians $H$ under some Lipschitz continuity in $x$ for $H$ as for $\alpha=1$.

\begin{lemma}[Lipschitz preserving]
\label{l:lip}
Assume (A1), (A3), (A4') and that there exist constants $L_1\ge0$ and $L_2>0$ such that
\begin{equation*}
|H(t,x,r,p)-H(t,y,r,p)|\le L_1|x-y|+L_2|x-y||p|
\end{equation*}
for all $(t,x,y,r,p)\in(0,T]\times\mathbb{R}^d\times\mathbb{R}^d\times\mathbb{R}\times\mathbb{R}^d$.
Let $u\in C(\overline{Q_T})$ be a solution of \eqref{e:fhj}-\eqref{e:init}.
Then $|u(t,x)-u(t,y)|\le L(t)|x-y|$ for all $(t,x,y)\in[0,T]\times\mathbb{R}^d\times\mathbb{R}^d$ with
$$
L(t)=\left(\lip[u_0]+\frac{L_1}{L_2}\right)E_{\alpha}(L_2 t^{\alpha})-\frac{L_1}{L_2}.
$$
\end{lemma}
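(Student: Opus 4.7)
The plan is a doubling variables argument in space. The key analytic ingredient is that $L$ is engineered precisely so that it satisfies the Caputo ODE
$$\partial_t^\alpha L=L_2L+L_1,\qquad L(0)=\lip[u_0],$$
as one verifies termwise from the power series defining $E_\alpha$ together with \eqref{e:derivativeofpowerfunction}, giving the identity $\partial_t^\alpha E_\alpha(\lambda t^\alpha)=\lambda E_\alpha(\lambda t^\alpha)$. In particular $L\in\mathcal{C}^1([0,T])$, $L\ge 0$, and since $L$ is smooth on $(0,T]$, Proposition~\ref{p:integrationbyparts} tells us $K_0[L]=\partial_t^\alpha L$.

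Suppose for contradiction that $u(\bar t,\bar x)-u(\bar t,\bar y)-L(\bar t)|\bar x-\bar y|>0$ at some point, and for $\varepsilon>0$ set
$$\Phi_\varepsilon(t,x,y):=u(t,x)-u(t,y)-L(t)\sqrt{|x-y|^2+\varepsilon^2}.$$
For $\varepsilon$ small enough $\sup\Phi_\varepsilon>0$, and by periodicity in $(x,y)$ combined with the penalization in $|x-y|$ the supremum is attained at some $(\hat t,\hat x,\hat y)$. The initial-time estimate $\Phi_\varepsilon(0,\cdot,\cdot)\le 0$, coming from $\lip[u_0]=L(0)$, rules out $\hat t=0$, so $\hat t>0$. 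I would then apply Lemma~\ref{l:ishiilemma} with test function $\phi(t,x,y):=L(t)\sqrt{|x-y|^2+\varepsilon^2}\in\mathcal{C}^1([0,T]\times\mathbb{R}^d\times\mathbb{R}^d)$ (viewing $u$ simultaneously as subsolution and supersolution), producing
$$K_0[u](\hat t,\hat x)-K_0[u](\hat t,\hat y)+H(\hat t,\hat x,u(\hat t,\hat x),p_\varepsilon)-H(\hat t,\hat y,u(\hat t,\hat y),p_\varepsilon)\le 0,$$
where $p_\varepsilon=L(\hat t)(\hat x-\hat y)/\sqrt{|\hat x-\hat y|^2+\varepsilon^2}$ and $|p_\varepsilon|\le L(\hat t)$.

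Two lower bounds now close the argument. Since $u(\hat t,\hat x)>u(\hat t,\hat y)$, the monotonicity (A3) and the assumed Lipschitz control of $H$ in $x$ yield
$$H(\hat t,\hat x,u(\hat t,\hat x),p_\varepsilon)-H(\hat t,\hat y,u(\hat t,\hat y),p_\varepsilon)\ge-(L_1+L_2L(\hat t))|\hat x-\hat y|.$$
On the other hand the global maximality of $\Phi_\varepsilon$, applied against $(\hat t-\tau,\hat x,\hat y)$ for each $\tau\in[0,\hat t]$, gives the pointwise inequality
$$\bigl(u(\hat t,\hat x)-u(\hat t-\tau,\hat x)\bigr)-\bigl(u(\hat t,\hat y)-u(\hat t-\tau,\hat y)\bigr)\ge (L(\hat t)-L(\hat t-\tau))\sqrt{|\hat x-\hat y|^2+\varepsilon^2},$$
which, integrated against $\alpha\tau^{-\alpha-1}/\Gamma(1-\alpha)\,d\tau$ and augmented with the non-integration term of $K_0$, produces
$$K_0[u](\hat t,\hat x)-K_0[u](\hat t,\hat y)\ge\partial_t^\alpha L(\hat t)\,\sqrt{|\hat x-\hat y|^2+\varepsilon^2}=(L_2L(\hat t)+L_1)\sqrt{|\hat x-\hat y|^2+\varepsilon^2}.$$
Inserting both bounds into the viscosity inequality yields $(L_1+L_2L(\hat t))(\sqrt{|\hat x-\hat y|^2+\varepsilon^2}-|\hat x-\hat y|)\le 0$, a contradiction whenever the prefactor is positive. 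Letting $\varepsilon\to 0$ and swapping $(x,y)$ then gives $|u(t,x)-u(t,y)|\le L(t)|x-y|$.

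The main obstacle I expect is the lower bound on $K_0[u](\hat t,\hat x)-K_0[u](\hat t,\hat y)$: unlike the local case $\alpha=1$ where only the time-derivative of the test function at one point is used, the nonlocal $K_0$ demands the doubling-variable inequality to hold on the entire past interval $[0,\hat t]$. This is precisely what the global (in space and time) maximality of $\Phi_\varepsilon$ supplies, and it is what allows the Caputo ODE satisfied by $L$ to transfer into a pointwise estimate on the difference of $K_0[u]$'s. The degenerate case $\lip[u_0]=L_1=0$ (so $L\equiv 0$) falls outside the strict-contradiction step but is handled separately by uniqueness (Corollary~\ref{c:unique}): $u_0$ is then constant and $H(t,x,r,0)$ is independent of $x$, so the scalar Caputo ODE $\partial_t^\alpha v+H(t,\cdot,v,0)=0$, $v(0)=u_0$, produces a spatial-constant solution of \eqref{e:fhj}-\eqref{e:init}, which coincides with $u$.
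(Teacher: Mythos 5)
Your proof is correct and rests on the same core mechanism as the paper's: doubling only the space variable with the penalization $L(t)\cdot(\text{distance})$, invoking Lemma~\ref{l:ishiilemma} at the maximum point, transferring the global-in-time maximality into the bound $K_0[u](\hat t,\hat x)-K_0[u](\hat t,\hat y)\ge K_0[L](\hat t)\cdot(\text{distance})$, and closing with the Caputo ODE satisfied by the Mittag-Leffler profile together with (A3) and the Lipschitz structure of $H$ in $x$. The one genuine difference is the device used to produce a strict contradiction: the paper perturbs the profile, replacing $L_1$ by $L_1+\delta$ so that $\partial_t^\alpha L_\delta-L_2L_\delta-L_1=\delta$, which yields $\delta|\hat x-\hat y|\le0$ with $\hat x\neq\hat y$ and then lets $\delta\to0$; you instead smooth the distance to $\sqrt{|x-y|^2+\varepsilon^2}$ and exploit $\sqrt{|x-y|^2+\varepsilon^2}>|x-y|$. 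Your variant has the merit that the test function $L(t)\sqrt{|x-y|^2+\varepsilon^2}$ is genuinely in $\mathcal{C}^1([0,T]\times\mathbb{R}^d\times\mathbb{R}^d)$, whereas the paper's $L_\delta(t)|x-y|$ is not $C^1$ on the diagonal and is only admissible because $\hat x\neq\hat y$ (a standard but unstated modification); the price you pay is the degenerate case $L_1=\lip[u_0]=0$, which the paper's $\delta$-trick absorbs automatically. Your separate treatment of that case is fine in outline, but note that existence of the spatially constant solution of the scalar fractional ODE is not free: the cleanest route within the paper is to apply Corollary~\ref{cor:constructbarrier} to the $x$-independent Hamiltonian $\tilde H(t,r):=H(t,\cdot,r,0)$, observe by uniqueness and translation invariance that the resulting solution is constant in $x$, and then check (via $D\phi=0$ at interior spatial extrema of test functions) that it solves the original problem, so Corollary~\ref{c:unique} identifies it with $u$; alternatively, you could simply borrow the paper's $\delta$-perturbation of $L_1$ and avoid the case split altogether. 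Minor points: the attainment of the maximum of $\Phi_\varepsilon$ when $\lip[u_0]=0<L_1$ (so $L(0)=0$) needs a word, since the penalization degenerates as $t\to0$, but a maximizing sequence with $|x_j-y_j|\to\infty$ forces $t_j\to0$ and hence $\Phi_\varepsilon\to0$, which is excluded; and the closing phrase ``letting $\varepsilon\to0$'' is superfluous, since the contradiction for each small $\varepsilon$ already refutes the supposition.
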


\begin{proof}
For $\delta$ we set
$$
\Phi_{\delta}(t,x,y):=u(t,x)-u(t,y)-L_{\delta}(t)|x-y|,
$$
where
$$
L_{\delta}(t)=\left(\lip[u_0]+\frac{L_1+\delta}{L_2}\right)E_{\alpha}(L_2 t^{\alpha})-\frac{L_1+\delta}{L_2}.
$$
Note that $L_{\delta}\in C^{1}((0,T])\cap C([0,T])$ and $L_{\delta}'\in L^{1}(0,T)$.
Suppose by contradiction that there would exit $\delta>0$ such that $\max_{[0,T]\times\mathbb{R}^d\times\mathbb{R}^d}\Phi_{\delta}>0$.

Let $(\hat{t},\hat{x},\hat{y})\in[0,T]\times\mathbb{R}^d\times\mathbb{R}^d$ be a maximum point of $\Phi$.
Note that $\hat{t}>0$ and $\hat{x}\neq\hat{y}$; otherwise $0<\Phi(\hat{t},\hat{x},\hat{x})=0$ or $0<\Phi(0,\hat{x},\hat{y})\le0$ since $u(0,\cdot)=u_0$ is Lipschitz continuous and $E_{\alpha}(0)=1$.
Moreover $u(\hat{t},\hat{x})\ge u(\hat{t},\hat{y})$ from $\Phi_{\delta}(\hat{t},\hat{x},\hat{y})>0$.

Since $u$ is a solution of \eqref{e:fhj}, we have from Lemma \ref{l:ishiilemma}
\begin{equation}
\label{e:lip}
K_0[u](\hat{t},\hat{x})-K_0[u](\hat{t},\hat{y})+H(\hat{t},\hat{x},u(\hat{t},\hat{x}),L_{\delta}(\hat{t})\hat{p})-H(\hat{t},\hat{y},u(\hat{t},\hat{y}),L_{\delta}(\hat{t})\hat{p})\le0,
\end{equation}
where $\hat{p}:=(\hat{x}-\hat{y})/|\hat{x}-\hat{y}|$.
It follows from the definition of $\Phi$ that 
$$
K_0[u](\hat{t},\hat{x})-K_0[u](\hat{t},\hat{y})\ge K_0[L_{\delta}](\hat{t})|\hat{x}-\hat{y}|=(\partial_t^\alpha L_{\delta})(\hat{t})|\hat{x}-\hat{y}|.
$$
Hamiltonians are estimates as
\begin{align*}
&H(\hat{t},\hat{x},u(\hat{t},\hat{x}),L_{\delta}(\hat{t})\hat{p})-H(\hat{t},\hat{y},u(\hat{t},\hat{y}),L_{\delta}(\hat{t})\hat{p})\\
&\ge H(\hat{t},\hat{x},u(\hat{t},\hat{y}),L_{\delta}(\hat{t})\hat{p})-H(\hat{t},\hat{y},u(\hat{t},\hat{y}),L_{\delta}(\hat{t})\hat{p})\\
&\ge -L_1|\hat{x}-\hat{y}|-L_2 L_{\delta}(\hat{t})|\hat{x}-\hat{y}|
\end{align*}
by (A3) and (A2') with $|\hat{p}|=1$.
Thus \eqref{e:lip} is led to
$$
[(\partial_t^{\alpha}L_{\delta})(\hat{t})-L_2L_{\delta}(\hat{t})-L_1]|\hat{x}-\hat{y}|\le0.
$$
Since $L_{\delta}$ satisfies 
$$
\partial_t^\alpha L_{\delta}-L_2L_{\delta}-L_1=\delta
$$
according to \cite[Theorem 7.2, Remark 7.1]{d}, we obtain
$$
\delta|\hat{x}-\hat{y}|\le0,
$$
a contradiction.

Consequently for any $\delta>0$ we see that $|u(t,x)-u(t,y)|\le L_{\delta}|x-y|$ for all $(t,x,y)\in[0,T]\times\mathbb{R}^d\times\mathbb{R}^d$.
Letting $\delta\to0$ yields the conclusion.
\end{proof}

\begin{lemma}
\label{l:cont}
Assume (A1), (A2), (A3) and (A4').
Let $u\in C(\overline{Q_T})$ be a solution of \eqref{e:fhj}-\eqref{e:init}.
Then there exists a constant $M>0$ depending only on $H$, $u_0$, $\alpha$ and $T$ such that
$$
|u(t,x)-u_0(x)|\le M t^{\alpha}
$$
for all $(t,x)\in\overline{Q_T}$.
\end{lemma}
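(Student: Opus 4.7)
The plan is to produce explicit smooth super- and subsolutions of \eqref{e:fhj}-\eqref{e:init} of the form $u_0^\delta(x) \pm L\delta \pm Mt^\alpha/\Gamma(\alpha+1)$, where $u_0^\delta$ is a smooth approximation of $u_0$, and then apply the comparison principle (Theorem~\ref{t:comp}). I will only give the argument for the upper bound; the lower one is entirely symmetric.

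First, by standard mollification on $\mathbb{T}^d$, for each $\delta>0$ I obtain $u_0^\delta \in C^\infty(\mathbb{T}^d)$ with $\|u_0^\delta - u_0\|_\infty \le L\delta$ and $\|Du_0^\delta\|_\infty \le L$, where $L := \lip[u_0]$. Next I set
$$
M := \sup\bigl\{\,|H(t,x,u_0(x),p)| \bigm| (t,x)\in\overline{Q_T},\ |p|\le L\,\bigr\},
$$
which is finite by (A1) and the compactness of the set over which the supremum is taken; crucially, $M$ depends only on $H$, $u_0$, $\alpha$ and $T$, and not on $\delta$.

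Now define $v^\delta(t,x) := u_0^\delta(x) + L\delta + Mt^\alpha/\Gamma(\alpha+1)$, so that $v^\delta \in \mathcal{C}^1(\overline{Q_T})$. Formula~\eqref{e:derivativeofpowerfunction} applied with $a=0$ and $\beta=\alpha$ gives $\partial_t^\alpha v^\delta \equiv M$ on $Q_T$. By construction $v^\delta \ge u_0$ pointwise on $\overline{Q_T}$, so (A3) yields
$$
H(t,x,v^\delta(t,x),Dv^\delta(t,x)) \ge H(t,x,u_0(x),Du_0^\delta(x)) \ge -M,
$$
the last inequality holding because $|Du_0^\delta|\le L$ and by the choice of $M$. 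Thus $\partial_t^\alpha v^\delta + H(t,x,v^\delta,Dv^\delta) \ge 0$, i.e., $v^\delta$ is a classical supersolution of \eqref{e:fhj}, and hence a viscosity supersolution (by an argument analogous to Proposition~\ref{p:consistency}). Since also $v^\delta(0,\cdot) \ge u_0$, Theorem~\ref{t:comp} gives $u \le v^\delta$ on $\overline{Q_T}$. Letting $\delta \to 0$ yields $u(t,x) \le u_0(x) + Mt^\alpha/\Gamma(\alpha+1)$. The symmetric construction $w^\delta(t,x) := u_0^\delta(x) - L\delta - Mt^\alpha/\Gamma(\alpha+1)$, which is a classical subsolution with $w^\delta(0,\cdot)\le u_0$, furnishes the opposite bound, and setting $M' := M/\Gamma(\alpha+1)$ completes the proof.

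The only subtle point, and the crux of the argument, is choosing $M$ independently of the mollification parameter $\delta$. This is achieved by invoking the monotonicity (A3) to replace the unknown argument $v^\delta$ of $H$ by the fixed function $u_0$, thereby restricting attention to a compact set of $(r,p)$-values determined purely by $u_0$ and $L$. Without (A3), one would instead need to bound $H$ on a range of $r$-values that grows with $M$, leading to an implicit inequality $M \ge \Phi(M)$ that might not be solvable. Everything else---the smooth mollification on $\mathbb{T}^d$, the computation $\partial_t^\alpha[t^\alpha] = \Gamma(\alpha+1)$ via \eqref{e:derivativeofpowerfunction}, and the standard fact that a classical super/subsolution is a viscosity super/subsolution---is routine.
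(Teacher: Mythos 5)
Your proof is correct and follows essentially the same route as the paper: explicit barriers of the form $u_0\pm\text{const}\cdot t^{\alpha}$, made into super/subsolutions by choosing the constant via a supremum of $|H|$ over a compact set determined by $\lip[u_0]$ (using (A3) to control the $r$-argument), and then the comparison principle of Theorem~\ref{t:comp}. The only minor difference is technical: you mollify $u_0$ so the barriers are classical sub/supersolutions and let $\delta\to0$ afterwards, whereas the paper keeps the nonsmooth barriers $u_0(x)\pm Mt^{\alpha}$ and verifies the viscosity inequalities directly, using that $|D\phi|\le\lip[u_0]$ at any maximum point and that $K_0[u^{\mp}]=\mp\Gamma(1+\alpha)M$.
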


\begin{proof}
We find that $u^-(t,x):=u_0(x)-Mt^{\alpha}$ and $u^+(t,x):=u_0(x)+Mt^{\alpha}$ are a subsolution and a supersolution of \eqref{e:fhj}-\eqref{e:init}, respectively, where the constant $M$ is chosen so large that
$$
M\ge\sup\{\Gamma^{-1}(\alpha+1)|H(t,x,\max_{\mathbb{T}^d}|u_0|,p)| \mid (t,x)\in Q_T,|p|\le \lip[u_0],\alpha\in(0,1)\}.
$$
In fact, if $u^- -\phi$ attains a maximum at $(\hat{t},\hat{x})\in(0,T]\times\mathbb{R}^d$ for $\phi\in\mathcal{C}^1([0,T]\times\mathbb{R}^d)$, then $|D\phi(\hat{t},\hat{x})|\le \lip[u_0]$ and $K_0[u^-](\hat{t},\hat{x})=-M \partial_t^\alpha t^\alpha|_{t=\hat{t}}=\Gamma(1+\alpha)M$ by using the formula $\partial_t^{\alpha}t^{\alpha}=\Gamma(1+\alpha)$ derived from \eqref{e:derivativeofpowerfunction}.
Therefore
$$
K_0[u^-](\hat{t},\hat{x})+H(\hat{t},\hat{x},u^-(\hat{t},\hat{x}),D\phi(\hat{t},\hat{x}))
\le -\Gamma(1+\alpha)M+H(\hat{t},\hat{x},\max_{\mathbb{T}^d}|u_0|,D\phi(\hat{t},\hat{x}))\le0.
$$
by (A3) and the choice of $M$ since $u^-(t,x)=u_0(x)-Mt^\alpha\le\max_{\mathbb{T}^d}|u_0|$.
Similarly, it is verified that $u^+$ is a supersolution of \eqref{e:fhj}.

Theorem \ref{t:comp} (comparison principle) yields to
$$
u_0(x)-Mt^{\alpha}=u^-(t,x)\le u(t,x)\le u^+(t,x)=u_0(x)+Mt^{\alpha}
$$
on $\overline{Q_T}$, which is noting but the desired estimate.
\end{proof}

\begin{remark}
\label{r:unifbdd}
Let $u_{\alpha}$ be a solution of \eqref{e:fhj}-\eqref{e:init} whose time-derivative's order is $\alpha\in(0,1]$.
For $u_0\in C(\mathbb{T}^{d})$ not necessarily Lipschitz continuous, we observe that
\begin{equation}
\label{e:unifbdd}
\sup\{|u_\alpha(t,x)| \mid (t,x)\in\overline{Q_T},\alpha\in(0,1]\}\le\max_{\mathbb{T}^{d}}|u_0|+C\max\{1,T\}. 
\end{equation}
Here $C>0$ is a large constant so that
$$
C\ge\sup\{\Gamma^{-1}(\alpha+1)|H(t,x,\max_{\mathbb{T}^d}|u_0|,0)| \mid (t,x)\in \overline{Q_T},\alpha\in(0,1]\}.
$$
Indeed, $\max_{\mathbb{T}^{d}}|u_0|-Ct^{\alpha}$ and $-\max_{\mathbb{T}^{d}}|u_0|+Ct^{\alpha}$ are a (classical) subsolution and a (classical) supersolution of \eqref{e:fhj}-\eqref{e:init}, respectively.
Thus the comparison principle implies \eqref{e:unifbdd} once one realizes that $t^{\alpha}\le T^{\alpha}\le\max\{1,T\}$ for all $t$, $T$ and $\alpha\in(0,1]$.
\end{remark}

\begin{lemma}[Temporal H\"{o}lder continuity]
\label{l:holder}
Assume (A1), (A2), (A3) and (A4').
Let $u\in C(\overline{Q_T})$ be a solution of \eqref{e:fhj}-\eqref{e:init}.
Then for the same constant $M>0$ as in Lemma \ref{l:cont} 
\begin{equation}
\label{e:holder}
|u(t,x)-u(s,x)|\le M|t-s|^{\alpha}
\end{equation}
for all $(t,s,x)\in[0,T]\times[0,T]\times\mathbb{T}^{d}$.
\end{lemma}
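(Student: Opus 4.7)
The plan is to apply Perron's method with a family of viscosity subsolutions of \eqref{e:fhj}-\eqref{e:init} that are $\alpha$-H\"{o}lder continuous in time with the same constant $M$ from Lemma~\ref{l:cont}, and then conclude by uniqueness (Corollary~\ref{c:unique}).

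First, I would check that $v_\varepsilon(t, x) := u_0(x) - \varepsilon - Mt^\alpha$ is such a subsolution for every $\varepsilon > 0$. The H\"{o}lder estimate in time uses the elementary inequality $|t^\alpha - s^\alpha| \le |t-s|^\alpha$ for $s, t \ge 0$ and $\alpha \in (0, 1)$. For the viscosity subsolution property via Lemma~\ref{l:equivalence}: if $v_\varepsilon - \phi$ has a maximum at $(\hat t, \hat x)$, then $u_0 - \phi(\hat t, \cdot)$ has a spatial maximum at $\hat x$, so (A4') gives $|D\phi(\hat t, \hat x)| \le \mathrm{Lip}[u_0]$; Proposition~\ref{p:integrationbyparts} applied to $v_\varepsilon(\cdot, \hat x) \in \mathcal{C}^1([0,T])$ yields $K_0[v_\varepsilon] \equiv -M\Gamma(1+\alpha)$; and (A3) together with the defining inequality of $M$ in Lemma~\ref{l:cont} then gives the subsolution inequality.

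Next, introduce
\[\mathcal{A} := \{v \in S^{-} \mid v(0, \cdot) \le u_0 \text{ on } \mathbb{T}^d,\ |v(t, x) - v(s, x)| \le M|t - s|^\alpha \text{ for all } t, s, x\}.\]
It is non-empty ($v_\varepsilon \in \mathcal{A}$) and bounded above by $u$ via Theorem~\ref{t:comp}, so $\tilde u := \sup \mathcal{A}$ is well-defined. A straightforward $\limsup$ argument shows that the H\"{o}lder-in-time bound with constant $M$ is preserved under both supremum and upper semicontinuous envelope; by Lemma~\ref{l:closedness}, $\tilde u^*$ is a subsolution, and the H\"{o}lder bound combined with continuity of $u_0$ gives $\tilde u^*(0, \cdot) \le u_0$. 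Hence $\tilde u^* \in \mathcal{A}$ and $\tilde u = \tilde u^*$.

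The remaining step is to show $\tilde u$ is a viscosity supersolution, which I expect to be the main obstacle. I would adapt the bump-up argument of Theorem~\ref{t:perron}: at a hypothetical failure point $(\hat t, \hat x)$ with test function $\phi$ and slack $\theta$, one constructs $U := \max(\tilde u, w)$ with $w$ a localized perturbation of $\phi$. The critical observation is that the maximum of two functions which are both $\alpha$-H\"{o}lder in time with constant $M$ is itself $\alpha$-H\"{o}lder with constant $M$, so if $w$ can be chosen globally $\alpha$-H\"{o}lder in time with constant $M$, then $U \in \mathcal{A}$ contradicts the maximality of $\tilde u$. The delicate point is the construction of such a $w$: the standard quadratic time-penalty $(s-\hat t)^2$ should be replaced by an $\alpha$-H\"{o}lder penalty $M|s-\hat t|^\alpha$, and the Lipschitz-in-time contribution from $\phi$ needs to be localized (e.g., by smooth truncation) so that it is negligible compared to the $\alpha$-H\"{o}lder part on the relevant neighborhood; this is the technical content behind the remark in the introduction about being "careful" with the H\"{o}lder constant. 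Once $\tilde u$ is shown to be a supersolution, uniqueness yields $\tilde u = u$, whence $|u(t, x) - u(s, x)| \le M|t - s|^\alpha$.
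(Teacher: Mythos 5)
Your overall frame coincides with the paper's: Perron's method over the class of subsolutions of \eqref{e:fhj}-\eqref{e:init} that are $\alpha$-H\"older in time with the constant $M$ of Lemma \ref{l:cont}, the barrier $u_0(x)-Mt^\alpha$ showing the class is nonempty, preservation of the H\"older bound under supremum, Lemma \ref{l:closedness} for the subsolution property, and the bump-up (supersolution) step as the only real obstacle. But your concrete fix of that last step has a genuine gap. First, the penalty $M|s-\hat{t}|^\alpha$ already has time-H\"older seminorm exactly $M$ (take $s=\hat{t}$), so after adding the (even truncated) test function, whose time increment near $\hat{t}$ is $a(t-\hat{t})+o(|t-\hat{t}|)$ with $a=\partial_t\phi(\hat{t},\hat{x})$, the increment of $w$ at $s=\hat{t}$ is $a(t-\hat{t})+o(|t-\hat{t}|)-M|t-\hat{t}|^\alpha$, whose absolute value exceeds $M|t-\hat{t}|^\alpha$ whenever $a(t-\hat{t})<0$; making the Lipschitz part ``negligible'' cannot bring the constant back down to $M$, because the cusp has already used up all of $M$. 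Second, and more seriously, $|s-\hat{t}|^\alpha$ is not $\mathcal{C}^1$ in time and its $K_0$-type quantity blows up at the cusp: the contribution $\int_0^{\rho^2}\left((\hat{t}-\hat{s}+\tau)^\alpha-(\hat{t}-\hat{s})^\alpha\right)\tau^{-\alpha-1}d\tau$ is of order $\log\left(\rho^2/(\hat{t}-\hat{s})\right)$ and equals $+\infty$ when $\hat{s}=\hat{t}$, a case that cannot be excluded. The Perron argument of Theorem \ref{t:perron} relies precisely on the estimates you would lose, namely $U(\hat{s},\hat{y})-U(\hat{s}-\tau,\hat{y})\le w(\hat{s},\hat{y})-w(\hat{s}-\tau,\hat{y})\le C\tau$ (Lipschitz in time of the bump) to conclude $\limsup_{\rho\to0}K_0[U](\hat{s},\hat{y})\le K_0[u_*](\hat{t},\hat{x})$; with the cusp penalty this upper bound can be $+\infty$ and the contradiction with the failed supersolution inequality evaporates. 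Replacing $M$ by a smaller coefficient repairs the first defect but not the second.

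The paper's resolution keeps the bump smooth: take $w(s,y)=\phi(s,y)+\rho^2/2-|s-\hat{t}|^2-|y-\hat{x}|^2$ exactly as in Theorem \ref{t:perron}, and observe via Taylor expansion that on $\overline{B_\delta(\hat{t},\hat{x})}$ the time increments of $w$ are bounded by $\left((|a|+\eta)\delta^{1-\alpha}+\delta^{2-\alpha}\right)|t-s|^\alpha$, which is at most $M|t-s|^\alpha$ once $\delta$ is small; a $C^1$-in-time function on a short time interval is automatically $\alpha$-H\"older there with constant as small as desired. Then one shrinks the modification radius, $\rho\le\sqrt{2}\,\delta$, so that the set $\Omega$ where $U=\max\{u^*,w\}$ can differ from $u^*$ lies inside $B_\delta(\hat{t},\hat{x})$; since $u^*>w$ on the remainder of the modified rectangle and $U=u^*$ outside it, $U$ satisfies \eqref{e:holder} on all of $\overline{Q_T}$ with the same constant $M$, and the rest of the Perron proof goes through unchanged. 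So the missing idea is not a new penalty but the observation that smallness of the neighborhood alone converts the Lipschitz-in-time bump into an $\alpha$-H\"older one with constant at most $M$.
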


\begin{proof}
Let $X$ be a set of subsolutions $v$ of \eqref{e:fhj}-\eqref{e:init} such that $v$  satisfy \eqref{e:holder} and $u_0-Mt^\alpha\le v\le u_0+Mt^\alpha$ on $\overline{Q_T}$.
Notice that $X\neq\emptyset$ since $u_0(x)-Mt^\alpha\in X$ due to Lemma \ref{l:cont}.
Define $u=\sup\{v\mid v\in X\}$.
We show by Perron's method that $u$ is a solution of \eqref{e:fhj}-\eqref{e:init} satisfying \eqref{e:holder}.
In view of Corollary \ref{cor:constructbarrier} it is enough to prove that $u$ is a solution of \eqref{e:fhj} satisfying \eqref{e:holder}.
In this proof we use same notations associated to the above $u$ as in Theorem \ref{t:perron}.

It is not hard to see that
\begin{equation}
\label{e:similarinequality}
|u(t,x)-u(s,x)|\le \sup\{|v(t,x)-v(s,x)|\mid v\in X\}
\end{equation}
for all $t,s\in[0,T]$ and $x\in\mathbb{T}^d$.
Since $M$ does not depend unknown functions and $v$ satisfies \eqref{e:holder}, we see that $u$ satisfies \eqref{e:holder}.
Let us prove that $u$ is a solution of \eqref{e:fhj}.

To do so we must show that the function $U$ satisfies \eqref{e:holder}.
All processes except for this step work to the current situation.
We first show that the function $w$ satisfies \eqref{e:holder} near $(\hat{t},\hat{x})$.
Expanding $w$ by Taylor formula, we have
\begin{align*}
w(s,y)-w(\hat{t},\hat{x})&=\phi(s,y)-\phi(\hat{t},\hat{x})-|s-\hat{t}|^2-|y-\hat{x}|^2\\
&=a(s-\hat{t})+p\cdot(y-\hat{x})+o(|s-\hat{t}|+|y-\hat{x}|)-|s-\hat{t}|^2-|y-\hat{x}|^2
\end{align*}
for $Q_T\ni(s,y)\to(\hat{t},\hat{x})$, where $a=\partial_t\phi(\hat{t},\hat{x})$ and $p=D\phi(\hat{t},\hat{x})$.
For every $\eta>0$ there exists $\delta>0$ such that
$$
|w(s,y)-w(\hat{t},\hat{x})|\le ((|a|+\eta)|s-\hat{t}|^{1-\alpha}+|s-\hat{t}|^{2-\alpha})|s-\hat{t}|^\alpha+(|p|+\eta)|y-\hat{x}|+|y-\hat{x}|^2
$$
for all $(s,y)\in\overline{B_\delta(\hat{t},\hat{x})}$.
Fix such $\eta>0$.
For $\delta$ so that $(|a|+\eta)\delta^{1-\alpha}+\delta^{2-\alpha}\le M$, $w$ satisfies \eqref{e:holder} on $\overline{B_\delta(\hat{t},\hat{x})}$.

Let $\rho$ be taken so small that $\rho\le \sqrt{2}\delta$.
Then $\Omega\subset B_\delta(\hat{t},\hat{x})$.
Since $u$ and $w$ satisfy \eqref{e:holder} in $((\hat{t}-\rho,\hat{t}+\rho)\times B_{2\rho}(\hat{x}))\cap B_{\delta}(\hat{t},\hat{x})$, it turns out by a similar inequality for the function $\max\{u^*,w\}$ as \eqref{e:similarinequality} that $\max\{u^*,w\}$ satisfies \eqref{e:holder} in the same region.
Since $u^*>w$ in $((\hat{t}-\rho,\hat{t}+\rho)\times B_{2\rho}(\hat{x}))\setminus\Omega$, in consequence, $U$ satisfies \eqref{e:holder} in $\overline{Q_T}$, a conclusion.
\end{proof}

\section{Another possible definition}\label{s:comment}
In this section, for simplicity, we only treat Hamiltonians independent of $t$ and $r$, i.e., $H=H(x,p)$ and assume (A1)-(A2).
Following to the usual style for viscosity solutions we are also able to define weak solutions of \eqref{e:fhj} as follows:

\begin{definition}[Provisional solutions]
\label{d:provisionalsolution}
For a function $u\in C(\overline{Q_T})$ we call $u$ a \emph{provisional subsolution} (resp. \emph{provisional supersolution}) of \eqref{e:fhj} if
$$
(\partial_{t}^{\alpha}\phi)(\hat{t},\hat{x})+H(\hat{x},D\phi(\hat{t},\hat{x}))\le0\quad\text{(resp. $\ge0$)}
$$ 
whenever $u-\phi$ attains a maximum (resp. minimum) at $(\hat{t},\hat{x})\in Q_T$ over $\overline{Q_T}$ for $\phi\in\mathcal{C}^{1}([0,T]\times\mathbb{R}^d)$.
If $u\in C(\overline{Q_T})$ is a both provisional sub- and supersolution of \eqref{e:fhj}, then we call $u$ a \emph{provisional solution} of \eqref{e:fhj}.
\end{definition}

It is no difficulties to prove that provisional solutions of \eqref{e:fhj} are consistent with classical solutions of \eqref{e:fhj} if they belong to $\mathcal{C}^{1}(\overline{Q_T})$; cf. Proposition \ref{p:consistency} and \cite[Theorem 1]{l}.
Although Definition \ref{d:provisionalsolution} looks good, there are some technical difficulties to handle provisional solutions.
We conclude this paper by sharing a main part of such difficulties.

They occurs in a proof of comparison principle.
Let $u$ and $v$ be respectively a provisional subsolution and a provisional supersolution of \eqref{e:fhj} such that $u(0,\cdot)\le v(0,\cdot)$ on $\mathbb{T}^{d}$.
Suppose that $\max_{\overline{Q_T}}(u-v)>0$ and aim to derive a contradiction.

There is a small constant $\eta>0$ such that
$$
\max_{(t,x)\in\overline{Q_T}}((u-v)(t,x)-\eta t^{\alpha})=:\theta>0.
$$
For $\varepsilon>0$ and $\delta>0$ we consider the function
$$
\Phi(t,s,x,y):=u(t,x)-v(s,y)-\frac{|x-y|^{2}}{2\varepsilon}-\frac{|t-s|^{2}}{2\delta}-\eta t^{\alpha}.
$$
on $[0,T]^{2}\times\mathbb{T}^{2d}$.
Let $(\bar{t},\bar{s},\bar{x},\bar{y})$ be a maximum point of $\Phi$.
From inequalities for provisional sub- and supersolutions, we have
\begin{equation}
\label{e:vineq}
\left(\partial_t^{\alpha}\frac{|\cdot-\bar{s}|^{2}}{2\delta}\right)(\bar{t},\bar{s})+\left(\partial_{s}^{\alpha}\frac{|\bar{t}-\cdot|}{2\delta}\right)(\bar{t},\bar{s})+\eta\Gamma(1+\alpha)+H(\bar{x},\bar{p})-H(\bar{y},\bar{p})\le0.
\end{equation}
Here $\bar{p}:=(\bar{x}-\bar{y})/\varepsilon$.
A similar argument is found in Section \ref{s:comp} of this paper.
The third term comes from the last term of $\Phi$, i.e, $\eta t^{\alpha}$.
Let us focus on the first and second terms.
A direct calculation implies that
$$
\partial_{t}^{\alpha}|t-s|^{2}=\frac{2(t-(2-\alpha)s)t^{1-\alpha}}{\Gamma(3-\alpha)}
$$
by the formula \eqref{e:derivativeofpowerfunction}.
By changing the role of $t$ and $s$, consequently, we get 
\begin{equation}
\label{e:deriv}
\begin{aligned}
\left(\partial_t^{\alpha}\frac{|\cdot-\bar{s}|^{2}}{2\delta}\right)(\bar{t})+\left(\partial_{s}^{\alpha}\frac{|\bar{t}-\cdot|^2}{2\delta}\right)(\bar{s})
&=\frac{(\bar{t}-(2-\alpha)\bar{s})\bar{t}^{1-\alpha}+(\bar{s}-(2-\alpha)\bar{t})\bar{s}^{1-\alpha}}{\delta\Gamma(1-\alpha)}\\
&=\frac{\bar{t}^{2-\alpha}+\bar{s}^{2-\alpha}-(2-\alpha)(\bar{s}\bar{t}^{1-\alpha}+\bar{t}\bar{s}^{1-\alpha})}{\delta\Gamma(3-\alpha)}.
\end{aligned}
\end{equation}

When $\alpha=1$, \eqref{e:deriv} vanishes.
Thus estimating Hamiltonians suitably (see the proof of Theorem \ref{t:comp} in this paper) and then passing to the limit $\varepsilon,\delta\to0$ in \eqref{e:vineq} yields the contradiction thanks to the third term.
On the other hand, the situation for $\alpha\in(0,1)$ is completely different.
Indeed, \eqref{e:deriv} possibly does not vanish and it is hard to control $(\bar{t},\bar{s})$ so that \eqref{e:deriv} is sufficiently small comparing to $\eta\Gamma(1+\alpha)$ as $\delta\to0$. 
There is a possibility that \eqref{e:deriv} diverges as $\delta\to0$ as well.

To solve above difficulties let us consider the following problem:
\begin{problem}
\label{problem}
Find a function $\psi\in C^1((0,T]^2;\mathbb{R})\cap C([0,T]^2)$ satisfying $\partial_t\psi(\cdot,s)\in L^1(0,T)$ for every $s\in[0,T]$ and $\partial_s\psi(t,\cdot)\in L^1(0,T)$ for every $t\in[0,T]$ such that
\begin{equation}
\label{e:question}
	\begin{cases}
		\partial_t^{\alpha}\psi(\cdot,s)+\partial_s^{\alpha}\psi(t,\cdot)\ge0\quad&\text{on $(0,T)^{2}$,}\\
		\psi=0\quad&\text{on $\{t=s\}$ and}\\
		\psi>0\quad&\text{on $[0,T]^{2}\setminus\{t=s\}$.} 
	\end{cases}
\end{equation}
\end{problem}
If we could find such a function, then the contradiction would be obtained by handling
$$
\Psi(t,s,x,y):=u(t,x)-v(s,y)-\frac{|x-y|^{2}}{2\varepsilon}-\frac{\psi(t,s)}{2\delta}-\eta t^{\alpha}
$$
instead of $\Phi$.
However, such a modification unfortunately does not overcome the difficulty yet.
\begin{proposition}
There is no function $\psi$ solving Problem \ref{problem}.
\end{proposition}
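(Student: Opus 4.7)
The plan is to derive the contradiction by evaluating the required inequality at a diagonal point, where $\psi$ attains its minimum. Suppose such a $\psi$ exists and fix arbitrary $s_0 \in (0, T)$. I will show that both $(\partial_t^\alpha \psi(\cdot, s_0))(s_0)$ and $(\partial_s^\alpha \psi(s_0, \cdot))(s_0)$ are strictly negative; summing them contradicts the inequality in \eqref{e:question} at the point $(s_0, s_0) \in (0, T)^2$.

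The two estimates being symmetric, I treat only the first. The regularity assumed in Problem \ref{problem} makes $f := \psi(\cdot, s_0)$ a member of $C^1((0, T]) \cap C([0, T])$ with $f' \in L^1(0, T)$, so Proposition \ref{p:integrationbyparts} applies at $t = s_0$ with $a = 0$. Using $f(s_0) = \psi(s_0, s_0) = 0$, the identity reduces to
\[
(\partial_t^\alpha \psi(\cdot, s_0))(s_0) = -\frac{\psi(0, s_0)}{s_0^\alpha \Gamma(1-\alpha)} - \frac{\alpha}{\Gamma(1-\alpha)} \int_0^{s_0} \frac{\psi(s_0 - \tau, s_0)}{\tau^{\alpha+1}} \, d\tau.
\]
The first term is strictly negative since $\psi(0, s_0) > 0$ (as $0 \neq s_0$), and the integrand of the second term is strictly positive on $(0, s_0)$ since $s_0 - \tau \neq s_0$. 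Convergence of the improper integral is automatic: $f$ is $C^1$ and attains a minimum at the interior point $s_0$, so $f'(s_0) = 0$, giving $\psi(s_0 - \tau, s_0) = o(\tau)$ as $\tau \to 0^+$, and the integrand is $o(\tau^{-\alpha})$ near the origin.

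The parallel argument for $s \mapsto \psi(s_0, s)$ yields the analogous strictly negative expression for $(\partial_s^\alpha \psi(s_0, \cdot))(s_0)$, and adding the two inequalities gives the desired contradiction. There is essentially no obstacle here: the argument is the strong minimum principle for Caputo's derivative — at a point where a non-constant function equals its minimum, the Caputo derivative is strictly negative — applied to each slice through the diagonal. This also clarifies why the proposed remedy described in this section cannot succeed: along $\{t=s\}$, the two Caputo contributions of any such $\psi$ are forced to add to a strictly negative number rather than cancel, in stark contrast with the classical case $\alpha = 1$ where the corresponding sum vanishes identically.
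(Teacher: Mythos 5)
Your proof is correct and is essentially the paper's own argument: both evaluate the inequality at a diagonal point, apply the integration-by-parts identity (Proposition \ref{p:integrationbyparts}) to each slice using $\psi=0$ on $\{t=s\}$, and conclude that the sum of the two Caputo derivatives is strictly negative because $\psi>0$ off the diagonal. Your extra remark via $f'(s_0)=0$ is harmless but unnecessary, since local Lipschitz continuity of the slice already gives the $O(\tau)$ bound that makes the improper integral converge.
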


\begin{proof}
Suppose by contradiction that there is a function $\psi$ solves Problem \ref{problem}.
Then $\psi$ should satisfy
$$
(\partial_t^\alpha\psi)(t,t)+(\partial_s^{\alpha}\psi)(t,t)\ge0,
$$
that is,
\begin{equation}
\label{e:deriv2}
\int_0^t\frac{(\partial_t\psi)(\tau,t)}{(t-\tau)^{\alpha}}d\tau+\int_0^t\frac{(\partial_s\psi)(t,\tau)}{(t-\tau)^{\alpha}}d\tau\ge0.
\end{equation}
Since $\psi(t,t)=0$ and $\psi(\cdot,t)\in C^{1}(0,T)$, integration by parts implies that
$$
\int_0^t\frac{(\partial_t\psi)(\tau,t)}{(t-\tau)^{\alpha}}d\tau=-\frac{\psi(0,t)}{t^{\alpha}}-\alpha\int_0^t\frac{\psi(t,\tau)}{(t-\tau)^{\alpha+1}}d\tau.
$$
Thus \eqref{e:deriv2} is rewritten as
$$
\frac{\psi(0,t)+\psi(t,0)}{t^{\alpha}}+\alpha\int_0^t\frac{\psi(t,\tau)+\psi(\tau,t)}{(t-\tau)^{\alpha+1}}d\tau\le0.
$$
However the left-hand side is positive since $\psi>0$ on $[0,T]^2\setminus\{t=s\}$, a contradiction.
\end{proof}

\subsection*{Acknowledgements}
The authors are grateful to Professor Masahiro Yamamoto and members of his group including Ms. Anna Suzuki for valuable information on this topic.
The authors would like to thank Professor Adam Kubica for careful reading of the manuscript and beneficial discussions.
The authors are grateful to the anonymous referee for improving presentation of the paper.
The first author is partly supported by the Japan Society for the Promotion of Science (JSPS) through grant No. 26220702 (Kiban S) and No. 16H03948 (Kiban B).
The second author is supported by Grant-in-aid for Scientific Research of JSPS Fellows No. 16J03422 and the Program for Leading Graduate Schools, MEXT, Japan.



\begin{thebibliography}{99}
\bibitem{abig}%
Achdou,~Y., Barles,~G., Ishii,~H., Litvinov,~G.~L. (2013). Hamilton-Jacobi equations: approximations, numerical analysis and applications. Lecture Notes in Mathematics, 2074. Heidelberg: Springer.
\bibitem{ai}
Alibaud,~N., Imbert,~C. (2008). Fractional semi-linear parabolic equations with unbounded data. Trans. Amer. Math. Soc. 361:2527--2566.
\bibitem{a}%
Allen,~M. A nondivergence parabolic problem with a fractional time derivative, preprint, arXiv:1507.04324 [math.AP].
\bibitem{bcd}%
Bardi, M., Capuzzo Dolcetta, I. (1997). Optimal control and viscosity solutions of Hamilton-Jacobi-Bellman equations. With appendices by Maurizio Falcone and Pierpaolo Soravia. Systems \& Control: Foundations \& Applications. Boston: Birkh\"{a}user.
\bibitem{bi}
Barles,~G., Imbert,~C. (2008). Second-order elliptic integro-differential equations: viscosity solutions' theory revisited. Ann. Inst. H. Poincar\'{e} Anal. Non Lin\'{e}aire 25:567--585.
\bibitem{cs}
Caffarelli,~L., Silvestre,~L. (2009). Regularity theory for fully nonlinear integro-differential equations. Comm. Pure Appl. Math. 62:597--638.
\bibitem{cil}
Crandall,~M.~G., Ishii,~H., Lions,~P.-L. (1992). User's guide to viscosity solutions of second order partial differential equations. Bull. Amer. Math. Soc. (N.S.) 27:1--67.
\bibitem{cl}
Crandall,~M.~G., Lions~P.-L. (1983). Viscosity solutions of Hamilton-Jacobi equations. Trans. Amer. Math. Soc. 277:1--42.
\bibitem{cl2}
{Crandall,~M.~G., Lions~P.-L. (1983). On existence and uniqueness of solutions of Hamilton-Jacobi equations. Nonlinear Anal. 10:353--370.}
\bibitem{d}%
Diethelm,~K. (2010). The analysis of fractional differential equations: An application-oriented exposition using differential operators of Caputo type. Lecture Notes in Mathematics, 2004. Berlin: Springer-Verlag.
\bibitem{fch}
Fomin,~S.~A., Chugunov,~V.~A., Hashida,~T. (2011). Non-Fickian mass transport in fractured porous media. Adv. Water Resour. 34:205--214.
\bibitem{fgd}
Foufoula-Georgiou,~E., Ganti,~V., Dietrich,~W. (2010). A nonlocal theory of sediment transport on hillslopes. J. Geophys. Res. 115.
\bibitem{g}%
Giga,~Y. (2006). Surface evolution equations: A level set approach. Monographs in Mathematics, 99. Basel: Birkh\"{a}user Verlag.
\bibitem{gkmr}%
Gorenflo,~R., Kilbas,~A.~A., Mainardi,~F., Rogosin,~S.~V. (2014). Mittag-Leffler functions, related topics and applications. Springer Monographs in Mathematics. Heidelberg: Springer.
\bibitem{hamamuki}
Hamamuki,~N. (2014). Asymptotically self-similar solutions to curvature flow equations with prescribed contact angle and their applications to groove profiles due to evaporation-condensation. Adv. Differential Equations 19:317--358.
\bibitem{hl}
Harvey,~F.~R., Lawson,~H.~B. (2013). The equivalence of viscosity and distributional subsolutions for convex subequations - a strong Bellman principle. Bull. Braz. Math. Soc. (N.S.) 44:621--652.
\bibitem{i}
Imbert,~C. (2005). A non-local regularization of first order Hamilton-Jacobi equations. J. Differential Equations 211:218--246.
\bibitem{ishii}
Ishii,~H. (1983). Remarks on Existence of Viscosity Solutions of Hamilton-Jacobi Equations. Bul. Fac. Sci. Engrg. Chuo Univ. 26:5--24.
\bibitem{ishii2}
Ishii,~H. (1986). Existence and uniqueness of solutions of Hamilton-Jacobi equations. Funkcial Ekvac. 29:167--188.
\bibitem{ishii3}
Ishii,~H. (1995). On the equivalence of two notions of weak solutions, viscosity solutions and distribution solutions. Funkcial. Ekvac. 38:101--120.
\bibitem{jlm}
Juutinen,~P., Lindqvist,~P., Manfredi,~J.~J. (2001). On the equivalence of viscosity solutions and weak solutions for a quasi-linear equation. SIAM J. Math. Anal. 33:699--717.
\bibitem{jlp}
Juutinen,~P., Lukkari,~T., Parviainen,~M. (2010). Equivalence of viscosity and weak solutions for the $p(x)$-Laplacian. Ann. Inst. H. Poincar\'{e} Anal. Non Lin\'{e}aire 27:1471--1487.
\bibitem{kst}%
Kilbas,~A.~A., Srivastava,~H.~M., Trujilo,~J.~J. (2006). Theory and applications of fractional differential equations. North-Holland Mathematics Studies, 204. Amsterdam: Elsevier Science B.V..
\bibitem{krs}%
Klages,~R., Radons,~G., Sokolov,~I.~M. (2008). Anomalous Transport: Foundations and Applications. Weinheim: WILEY-VCH Verlag GmbH \& Co..
\bibitem{k}%
Koike,~S. (2004). A Beginner's Guide to the Theory of Viscosity Solutions. MSJ Memoirs, 13. Tokyo: Math. Soc. Japan.
\bibitem{kv}
Kolokoltsov,~V.~N., Veretennikova,~M.~A. (2014). Well-posedness and regularity of the Cauchy problem for nonlinear fractional in time and space equations. Fractional Differ. Calc. 4:1--30.
\bibitem{kv2}
Kolokoltsov,~V.~N., Veretennikova,~M.~A. (2014). A fractional Hamilton Jacobi Bellman equation for scaled limits of controlled continuous time random walks.
Commun. Appl. Ind. Math. 6.
\bibitem{ku}
Kubica,~A. personal communication.
\bibitem{krr}
Kubica,~A., Rybka,~P., Ryszewska,~K. Weak solutions of fractional differential equations in non cylindrical domain. preprint, arXiv:1605.06775 [math.AP].
\bibitem{lly}
Li,~Z., Liu,~Y., Yamamoto,~M. (2015). Initial-boundary-value problems for the generalized multi-term time-fractional diffusion equation. Appl. Math. Comput. 257:381--397.
\bibitem{l}
Luchko,~Y. (2009). Maximum principle for the generalized time-fractional diffusion equation. J. Math. Anal. Appl. 351:218--223.
\bibitem{l2}
Luchko,~Y. (2010). Some uniqueness and existence results for the initial-boundary-value problems for the generalized time-fractional diffusion equation. Comput. Math. Appl. 59:1766--1772.
\bibitem{lryz}
Luchko,~Y., Rundell,~W., Yamamoto,~M., Zuo,~L. (2013). Uniqueness and reconstruction of an unknown semilinear term in a time-fractional reaction-diffusion equation. Inverse Problems 29:065019.
\bibitem{mmp}
Mainardi,~F., Mura,~A., Pagnini,~G. (2010). The M-Wright function in time-fractional diffusion processes: A tutorial survey. Int. J. Differ. Equations 2010:1-29.
\bibitem{mw}
Montroll,~E.~W., Weiss,~G.~H. (1965). Random walks on lattices. II. J. Math. Phys. 6:167--181.
\bibitem{p}%
Podlubny,~I. (1999). Fractional differential equations: An introduction to fractional derivatives, fractional differential equations, to methods of their solution and some of their applications. Mathematics in Science and Engineering, 198. San Diego: Academic Press.
\bibitem{sy}
Sakamoto,~K., Yamamoto,~M. (2011). Initial value/boundary value problems for fractional diffusion-wave equations and applications to some inverse problems. J. Math. Anal. Appl. 382:426--447.
\bibitem{skm}%
Samko,~S.~G., Kilbas,~A.~A., Marichev,~O.~I. (1993). Fractional integrals and derivatives: Theory and applications. Yverdon: Gordon and Breach Science Publishers.
\bibitem{smb}
Schumer,~R., Meerschaert,~M.~M., Baeumer,~B. (2009). Fractional advection-dispersion equations for modeling transport at the Earth surface. J. Geophys. Res. 114.
\bibitem{s}
Soner,~H.~M. (1986). Optimal control with state-space constraint. II. SIAM J. Control Optim. 24:1110--1122.
\bibitem{v}
Voller,~V.~R. (2014). Fractional Stefan problems. Int. J. Heat Mass Transfer 74:269--277.
\bibitem{za}
Zacher,~R. (2009). Weak Solutions of Abstract Evolutionary Integro-Differential Equations in Hilbert Spaces. Funkcial. Ekvac. 52:1--18.
\bibitem{za2}
Zacher,~R. (2012). Global strong solvability of a quasilinear subdiffusion problem. J. Evol. Equ. 12:813--831.
\bibitem{za3}
Zacher,~R. (2013). A weak Harnack inequality for fractional evolution equations with discontinuous coefficients. Ann. Sc. Norm. Super. Pisa Cl. Sci. 12:903--940.
\bibitem{z}%
Zhou,~Y. (2014). Basic theory of fractional differential equations. Hackensack: World Scientific Publishing Co. Pte. Ltd..
\end{thebibliography}
\end{document}